\newtheorem{theorem}{Theorem}[section]
\newtheorem{problem}{Problem}[section]
\newtheorem{lemma}[theorem]{Lemma}
\newtheorem{conj}[theorem]{Conjecture}
\newtheorem{claim}{Claim}
\theoremstyle{definition}
\begin{document}
\title{Spectral Tur\'{a}n problem of non-bipartite graphs: Forbidden books}
\author{{\bf Ruifang Liu},~{\bf Lu Miao}\thanks{Corresponding author.
E-mail addresses: rfliu@zzu.edu.cn (R. Liu); miaolu0208@163.com (L. Miao).}\\
{\footnotesize School of Mathematics and Statistics, Zhengzhou University, Zhengzhou, Henan 450001, China}}

\date{}
\maketitle
{\flushleft\large\bf Abstract}
A book graph $B_{r+1}$ is a set of $r+1$ triangles with a common edge, where $r\geq0$ is an integer. Zhai and Lin [J. Graph Theory 102 (2023) 502-520] proved that for  $n\geq\frac{13}{2}r$, if $G$ is a $B_{r+1}$-free graph of order $n$, then $\rho(G)\leq\rho(T_{n,2})$, with equality if and only if $G\cong T_{n,2}$. Note that the extremal graph $T_{n,2}$ is bipartite. Motivated by the above elegant result, we investigate the spectral Tur\'{a}n problem of non-bipartite $B_{r+1}$-free graphs of order $n$.
For $r=0$, Lin, Ning and Wu [Comb. Probab. Comput. 30 (2021) 258-270] provided a complete solution and proved a nice result: If $G$ is a non-bipartite triangle-free graph of order $n$, then $\rho(G)\leq\rho\big(SK_{\lfloor\frac{n-1}{2}\rfloor,\lceil\frac{n-1}{2}\rceil}\big)$, with equality if and only if $G\cong SK_{\lfloor\frac{n-1}{2}\rfloor,\lceil\frac{n-1}{2}\rceil}$, where $SK_{\lfloor\frac{n-1}{2}\rfloor,\lceil\frac{n-1}{2}\rceil}$ is the graph obtained from $K_{\lfloor\frac{n-1}{2}\rfloor,\lceil\frac{n-1}{2}\rceil}$ by subdividing an edge.

For general $r\geq1$, let $K_{\lfloor\frac{n-1}{2}\rfloor,\lceil\frac{n-1}{2}\rceil}^{r, r}$ be the graph obtained from $K_{\lceil\frac{n-1}{2}\rceil,\lfloor\frac{n-1}{2}\rfloor}$ by adding a new vertex $v_{0}$ such that $v_{0}$ has exactly $r$ neighbours in each part of $K_{\lceil\frac{n-1}{2}\rceil,\lfloor\frac{n-1}{2}\rfloor}$. By adopting a different technique named the residual index, Chv\'{a}tal-Hanson theorem and typical spectral extremal methods, we in this paper prove that: If $G$ is a non-bipartite $B_{r+1}$-free graph of order $n$, then $\rho(G)\leq\rho\Big(K_{\lfloor\frac{n-1}{2}\rfloor,\lceil\frac{n-1}{2}\rceil}^{r, r}\Big)$ , with equality if and only if $G\cong K_{\lfloor\frac{n-1}{2}\rfloor,\lceil\frac{n-1}{2}\rceil}^{r, r}$. An interesting phenomenon is that the spectral extremal graphs are completely different for $r=0$ and general $r\geq1$.

\begin{flushleft}
\textbf{Keywords:} Spectral extrema, Book, Non-bipartite graph, Adjacency matrix
\end{flushleft}
\textbf{AMS Classification:} 05C50; 05C35

\section{Introduction}\label{se1}
Let $G$ be a simple graph with vertex set $V(G)$ and edge set $E(G)$. For each vertex $u\in V(G)$, let $N(u)$ be the set of neighbours of $u$ in $V(G)$. The degree of $u$, denoted by $d(u)$, is the number of vertices in $N(u)$. For a subset $S$ of $V(G)$, denote by $N_{S}(u)$ and $d_{S}(u)$ the set and the number of neighbours of $u$ in $S$, respectively. Let $G[S]$ be the subgraph of $G$ induced by $S$, and let $|S|$ be the number of vertices in $S.$ For two disjoint subsets $A$ and $B$ of $V(G)$, we use $e(A,B)$ to denote the number of edges with one endpoint in $A$ and the other in $B$.

Given a graph $H$, a graph is said to be $H$-free if it does not contain a subgraph isomorphic to $H$. A classic problem in extremal graph theory is the Tur\'{a}n-type problem, which asks what is the maximum number of edges in an $H$-free graph of order $n$. The study of the Tur\'{a}n-type problem can date back to Mantel's theorem \cite{Mantel1907} in 1907, which states that a triangle-free graph $G$ of order $n$ has at most $\big\lfloor \frac{n^{2}}{4}\big\rfloor$ edges. In 1941, Tur\'{a}n \cite{Turan1941} extended Mantel's theorem and showed that $e(G)\leq e(T_{n,r})$ for every $K_{r+1}$-free graph $G$ of order $n\geq r+1\geq3$, where $T_{n,r}$ is the $r$-partite Tur\'{a}n graph. Since then, much attention has been paid to the Tur\'{a}n-type problem (see a survey, \cite{FS2013}).

Let $A(G)$ be the adjacency matrix of $G$. The largest eigenvalue of $A(G)$, denoted by $\rho(G)$, is called the spectral radius of $G$. By Perron-Frobenius theorem, every connected graph $G$ has a positive unit eigenvector corresponding to $\rho(G)$, which is called the \emph{Perron vector} of $G$. In 2007, Nikiforov \cite{Nikiforov2007} proved that $\rho(G)\leq \rho(T_{n,r})$ for every $n$-vertex $K_{r+1}$-free graph $G$, with equality if and only if $G\cong T_{n,r}$, which is known as the spectral Tur\'{a}n theorem. Furthermore, Nikiforov \cite{Nikiforov2} formally proposed a spectral version of the Tur\'{a}n-type problem as follows.

\begin{problem}[\!\cite{Nikiforov2}]\label{p1}
What is the maximum spectral radius of an $H$-free graph of order $n$?
\end{problem}

Focusing on Problem \ref{p1}, there have been a wealth of results in the past decades.
For example, see \cite{Nikiforov2007,Wilf,LP2023} for $K_{r+1}$-free graphs, \cite{CDM,Gao2019,LZZ2022,Nikiforov2008,Zhai2020} for spectral problems on cycles, \cite{Chen2019} for linear forests, \cite{CS2022} for odd wheels, \cite{CDM1} for a spectral Erd\H{o}s-S\'{o}s theorem, \cite{Tait2017,Lin2021} for planar graphs and outplanar graphs, \cite{Chen2024,HL2024,Tait2019,Zhai2022,ZFL} for $F$-minor-free graphs $(F=K_r-E(\bigcup_{i=1}^{l}P_{k_i}))$, $F_k$-minor-free graphs, $K_r$-minor-free graphs, $K_{s,t}$-minor-free graphs and general $H$-minor free graphs, \cite{CFTZ,DKL,Li2022} for intersecting graphs, \cite{Wang2023} for a spectral extremal conjecture, \cite{FTZ,Lei2024,NWK} for disjoint graphs, \cite{Zhai2023} for books and theta graphs, \cite{LN2023} for sparse spanning graphs, and \cite{LLF2022} for a comprehensive survey.

In particular, the spectral Tur\'{a}n problem on non-bipartite graphs of order $n$ has also attracted much attention. Let $SK_{\lfloor\frac{n-1}{2}\rfloor,\lceil\frac{n-1}{2}\rceil}$ be the graph obtained from $K_{\lfloor\frac{n-1}{2}\rfloor,\lceil\frac{n-1}{2}\rceil}$ by subdividing an edge. Lin, Ning and Wu \cite{LNW2021} proved that $\rho(G)\leq\rho\big(SK_{\lfloor\frac{n-1}{2}\rfloor,\lceil\frac{n-1}{2}\rceil}\big)$ for every non-bipartite triangle-free graph $G$ of order $n$, with equality if and only if $G\cong SK_{\lfloor\frac{n-1}{2}\rfloor,\lceil\frac{n-1}{2}\rceil}$. For this problem, Li and Peng \cite{LP2023} provided an alternative proof. Subsequently, Guo, Lin and Zhao \cite{Guo2021} showed that if $G$ is a non-bipartite $C_5$-free graph with order $n\geq21$, then $\rho(G)\leq\rho\big(K_{\lfloor\frac{n-2}{2}\rfloor,\lceil\frac{n-2}{2}\rceil}\bullet K_{3}\big)$, with equality if and only if $G\cong K_{\lfloor\frac{n-2}{2}\rfloor,\lceil\frac{n-2}{2}\rceil}\bullet K_{3}$, where $K_{\lfloor\frac{n-2}{2}\rfloor,\lceil\frac{n-2}{2}\rceil}\bullet K_{3}$ is the graph obtained by identifying a vertex of $K_{a,b}$ belonging to the part of size $b$ and a vertex of $K_3$. In fact, Zhang and Zhao \cite{ZZ2023} proved that $K_{\lfloor\frac{n-2}{2}\rfloor,\lceil\frac{n-2}{2}\rceil}\bullet K_{3}$ is still the unique spectral extremal graph among all non-bipartite $C_{2k+1}$-free graphs for $k\geq2$.
Furthermore, the maximum spectral radius of non-bipartite $\{C_3,C_5,\ldots,C_{2k+1}\}$-free graphs of order $n$ was determined independently by Lin and Guo \cite{LG2021} and Li, Sun and Yu \cite{LS2022}.

A graph is color-critical if it contains an edge whose removal reduces its chromatic number. In 2009, Nikiforov \cite{Nikiforov2009} proved that if $H$ is color-critical with $\chi(H)=k+1$, then there exists an $n_{0}(H)\geq e^{|V(H)|k^{(2k+9)(k+1)}}$ such that $\rho(G)\leq\rho(T_{n,k})$ for every $H$-free graph $G$ of order $n$, with equality if and only if $G\cong T_{n,k}$. Books, as a kind of color-critical graph, play an important role in the history of extremal graph theory. In 1962, Erd\H{o}s \cite{E} initiated the study of books in graphs. Since then books have attracted considerable attention both in extremal graph theory (see, e.g., \cite{EFG,EFR,KN}) and in Ramsey graph theory (see, e.g., \cite{NR2005,RS}).
A book graph $B_{r+1}$ is a set of $r+1$ triangles with a common edge, where $r\geq0$. It is easy to see that $B_{r+1}$ is color-critical with $\chi(B_{r+1})=3$. It follows from Nikiforov's result that $T_{n,2}$ attains the maximum spectral radius among all $B_{r+1}$-free graphs of order $n$. Note that $n_{0}(H)$ is exponential with $|V(H)|$. Very recently, Zhai and Lin \cite{Zhai2023} improved $n_{0}(H)$ to be a linear function on $|V(H)|$ for $H$ being a book graph.

\begin{theorem}[\!\cite{Zhai2023}]\label{th1}
Let $n$ and $r$ be integers with $n\geq\frac{13}{2}r$. If $G$ is a $B_{r+1}$-free graph of order $n$, then
\begin{eqnarray*}
\rho(G)\leq\rho(T_{n,2}),
\end{eqnarray*}
with equality if and only if $G\cong T_{n,2}$.
\end{theorem}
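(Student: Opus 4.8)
The plan is to work directly with the Perron vector of an extremal graph, exploiting that $B_{r+1}$-freeness is precisely the condition $|N(x)\cap N(y)|\le r$ for every edge $xy$, i.e. every edge lies in at most $r$ triangles. First I would let $G$ be a $B_{r+1}$-free graph of order $n$ of maximum spectral radius; a standard reduction makes $G$ connected, since a disconnected extremal graph would concentrate all of its spectral radius on a single component of fewer vertices, contradicting both $\rho(G)\ge\rho(T_{n,2})$ and the monotonicity of $\rho(T_{n,2})$ in $n$. Let $\mathbf{x}$ be the Perron vector normalised so that $\max_v x_v = x_u = 1$, put $d=d(u)$, $\rho=\rho(G)$, and $W=V(G)\setminus N[u]$, so $|W|=n-1-d$. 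Because $T_{n,2}$ is bipartite, hence $B_{r+1}$-free, we have the lower bound $\rho\ge\rho(T_{n,2})=\sqrt{\lfloor n^2/4\rfloor}$, and it suffices to prove the reverse inequality for $G$ together with the equality analysis.

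The engine is a count of two-step walks from $u$. Writing $t(uv)=|N(u)\cap N(v)|$ for the number of triangles on an edge $uv$ and evaluating $(A^2\mathbf{x})_u=\rho^2 x_u$, grouping the walks $u\to v\to w$ by whether $w=u$, $w\in N(u)$ or $w\in W$ gives, after reassembling $\sum_{v\in N(u)}d(v)=d+\sum_{v\in N(u)}t(uv)+\sum_{w\in W}|N(u)\cap N(w)|$, the identity
\begin{equation*}
\rho^2=\sum_{v\in N(u)}d(v)-\sum_{v\in N(u)}t(uv)\,(1-x_v)-\sum_{w\in W}|N(u)\cap N(w)|\,(1-x_w).
\end{equation*}
Both subtracted sums are nonnegative since $x_v,x_w\le 1$, so $\rho^2\le\sum_{v\in N(u)}d(v)$. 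The book condition now controls neighbour degrees: each $v\in N(u)$ has at most $t(uv)\le r$ neighbours inside $N(u)$, exactly one neighbour equal to $u$, and at most $|W|$ neighbours in $W$, so $d(v)\le n-d+r$; in particular $G[N(u)]$ has maximum degree at most $r$, whence $e(N(u))\le rd/2$.

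If $N(u)$ is independent then $t(uv)=0$ for every $v$, so $d(v)\le n-d$ and $\rho^2\le\sum_{v\in N(u)}d(v)\le d(n-d)\le\lfloor n^2/4\rfloor$. Combined with $\rho\ge\rho(T_{n,2})$ this forces equality everywhere, which requires $d\in\{\lfloor n/2\rfloor,\lceil n/2\rceil\}$, every $v\in N(u)$ adjacent to all of $\{u\}\cup W$, and $x_w=1$ for all $w\in W$; any edge inside $W$ would then have all of $N(u)$ as common neighbours, producing a $B_{r+1}$ once $d>r$, so $W$ is independent and $G\cong K_{d,\,n-d}=T_{n,2}$.

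The hard part, and where I expect the real work to be, is the complementary case in which $N(u)$ carries at least one edge. Here the bound $d(v)\le n-d+r$ lets $\sum_{v\in N(u)}d(v)$ exceed $\lfloor n^2/4\rfloor$ by as much as $\approx rd$, and the whole point is to show this surplus is always reclaimed by the two correction sums, so that still $\rho^2\le\lfloor n^2/4\rfloor$ and equality is impossible. The mechanism I would pursue is that every internal edge $vv'$ of $N(u)$ is a book spine: since $t(vv')\le r$, the endpoints $v,v'$ share at most $r$ common neighbours in total, so they cannot both be fully joined to $W$, and the eigenvalue equations at $v,v'$ and at their $W$-neighbours push the corresponding Perron entries a definite amount below $1$. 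Quantifying this — bounding the number and placement of internal edges via the maximum-degree/matching (Chv\'{a}tal--Hanson-type) estimate on $G[N(u)]$ and converting the resulting Perron deficiencies into a lower bound of order $rn$ for the correction sums — is exactly where the linear hypothesis $n\ge\tfrac{13}{2}r$ must be spent, and carrying this balance through cleanly is the principal obstacle. The uniqueness claim then follows by assembling the equality conditions isolated above, all of which single out $T_{n,2}$.
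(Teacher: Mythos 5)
You should first note that the paper does not prove this statement at all: Theorem 1.1 is quoted from Zhai and Lin \cite{Zhai2023}, so there is no in-paper proof to compare against. Judged on its own terms, your attempt is correct as far as it goes --- the walk identity
$\rho^{2}=\sum_{v\in N(u)}d(v)-\sum_{v\in N(u)}t(uv)(1-x_{v})-\sum_{w\in W}|N(u)\cap N(w)|(1-x_{w})$
is verified by a routine count, the bound $d_{N(u)}(v)=t(uv)\le r$ is exactly the codegree consequence of $B_{r+1}$-freeness, and your treatment of the case where $N(u)$ is independent, including the equality analysis forcing $G\cong T_{n,2}$, is sound. But the proof has a genuine, and acknowledged, gap: the case $e(G[N(u)])\ge 1$ is the entire content of the theorem (it is where the hypothesis $n\ge\frac{13}{2}r$ is spent), and you present only a program for it, not an argument. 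In particular, the mechanism you name --- that each internal edge of $N(u)$ forces Perron entries ``a definite amount below $1$'' --- fails at the per-vertex level: from $\rho x_{v}\le d(v)\le n-d+r$ and $d\ge\rho\approx n/2$ one only gets $1-x_{v}\ge(\rho-d(v))/\rho$, whose right side can be negative, so no individual deficiency is guaranteed. The reclaiming must instead be organized over \emph{pairs}: for an edge $vv'$ in $N(u)$ one has $d_{W}(v)+d_{W}(v')\le|W|+r-1$ (at most $r-1$ common $W$-neighbours, since $u$ is already a common neighbour), whence $(\rho-r)(x_{v}+x_{v'})\le(|W|+r+1)x_{u}$, and the total gain must be aggregated over a maximum matching of $G[N(u)]$, with Chv\'{a}tal--Hanson converting $e(N(u))\le\nu(r+1)$ (surplus $O(\nu r)$) against non-edges to $W$ of size at least $\nu(|B|-r+1)$ (gain of order $\nu n$). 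You gesture at exactly this computation but never run it, and you also never rule out equality in this branch.

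It is worth saying that your sketch is directionally the standard route: the present paper's own Section 3 (Lemmas 3.2--3.7 and Theorem 3.11) executes precisely this matching-plus-Chv\'{a}tal--Hanson balancing, together with a ``residual index'' $\Gamma_{w}=d_{A}(w)(x_{u^{*}}-x_{w})$ playing the role of your second correction sum, in the harder non-bipartite setting. So the missing step is fillable along the lines you indicate, but as submitted the argument proves the theorem only when $N(u)$ is independent; the complementary case --- where one must show $\rho^{2}<\lfloor n^{2}/4\rfloor$ strictly, so that no extremal graph has an edge in $N(u)$ --- remains an unproved plan, and with it the entire inequality and uniqueness claim for general $B_{r+1}$-free $G$.
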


Note that the above extremal graph $T_{n,2}$ is bipartite. Hence a natural and interesting problem occurs:

\begin{problem}\label{p2}
What is the maximum spectral radius of non-bipartite $B_{r+1}$-free graphs of order $n$?
\end{problem}

Note that $B_{r+1}$ is isomorphic to a triangle for $r=0$. A nice result of Lin, Ning and Wu in \cite{LNW2021} indicated the unique spectral extremal graph attaining the maximum spectral radius among all non-bipartite triangle-free graphs of order $n$.

\begin{theorem}[\!\cite{LNW2021}]\label{th2}
Let $G$ be a non-bipartite triangle-free graph of order $n$. Then
\begin{eqnarray*}
\rho(G)\leq\rho\Big(SK_{\lfloor\frac{n-1}{2}\rfloor,\lceil\frac{n-1}{2}\rceil}\Big),
\end{eqnarray*}
with equality if and only if $G\cong SK_{\lfloor\frac{n-1}{2}\rfloor,\lceil\frac{n-1}{2}\rceil}$.
\end{theorem}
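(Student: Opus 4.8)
The plan is to take $G$ to be a non-bipartite triangle-free graph of order $n$ attaining the maximum spectral radius, and to pin down its structure via the Perron vector together with the triangle-free second-neighbourhood identity. Write $\rho=\rho(G)$ and let $\mathbf{x}=(x_v)_{v\in V(G)}$ be the Perron vector, normalized so that $\max_v x_v = x_{u^*}=1$. We may assume $G$ is connected: if not, one can add an edge between two distinct components, which creates no triangle (the endpoints have no common neighbour) and keeps some odd cycle intact, yet strictly increases $\rho$. Because $SK_{\lfloor (n-1)/2\rfloor,\lceil(n-1)/2\rceil}$ is itself triangle-free and non-bipartite, it is a legitimate competitor, so $\rho\ge\rho\big(SK_{\lfloor (n-1)/2\rfloor,\lceil(n-1)/2\rceil}\big)$, and the whole argument reduces to showing that any such $G$ with $\rho(G)\ge\rho\big(SK_{\lfloor (n-1)/2\rfloor,\lceil(n-1)/2\rceil}\big)$ must be isomorphic to it.

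First I would establish two matching estimates. For the lower bound, a Rayleigh-quotient estimate with a suitable test vector on $SK_{a,b}$ (equivalently, subgraph monotonicity via $K_{a,b}-e\subseteq SK_{a,b}$, with $a=\lfloor(n-1)/2\rfloor$, $b=\lceil(n-1)/2\rceil$) gives $\rho^2\ge ab-o(n)=\big\lfloor\tfrac{(n-1)^2}{4}\big\rfloor-o(n)$. For the upper bound I would use triangle-freeness: since $N(u)$ is independent for every $u$, we have $(A^2)_{uu}=d(u)$ and $(A^2)_{uv}=0$ whenever $v\in N(u)$, whence
\[
\rho^2 x_u = d(u)\,x_u+\sum_{v\notin N(u)\cup\{u\}}|N(u)\cap N(v)|\,x_v .
\]
Evaluating at $u^*$, using $x_v\le 1$ and the identity $\sum_{v\notin N(u^*)\cup\{u^*\}}|N(u^*)\cap N(v)|=\sum_{w\in N(u^*)}(d(w)-1)$, yields $\rho^2\le\sum_{w\in N(u^*)}d(w)\le d(u^*)\big(n-d(u^*)\big)\le\big\lfloor\tfrac{n^2}{4}\big\rfloor$, the middle inequality again coming from triangle-freeness (every neighbour of $u^*$ sends its remaining edges outside $N(u^*)\cup\{u^*\}$). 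Comparing the two estimates forces $d(u^*)=\tfrac n2-o(n)$ and shows that almost all of the slack in the inequalities must vanish: the set $A:=N(u^*)$ is a near-maximum independent set, the complementary set $B:=V(G)\setminus A$ is essentially independent, and almost every pair in $A\times B$ is an edge. In other words $G$ differs from the balanced complete bipartite graph in only $O(1)$ edges.

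Next I would convert this approximate structure into the exact one. Refining the Perron equation (now retaining the $x_v$ weights rather than bounding them by $1$) pins the Perron entries of the $O(1)$ ``defect'' vertices and shows that any edge of $G$ lying inside $A$ or inside $B$ would force either a triangle or a strictly suboptimal $\rho$; hence $G$ is a subgraph of a complete bipartite graph together with a minimal non-bipartite gadget. Since $G$ is triangle-free and non-bipartite its odd girth is at least $5$, and the cheapest way to realize a $C_5$ without destroying the near-complete bipartite adjacency is to subdivide a single cross edge, i.e.\ to spend one vertex as a degree-two ``bridge'' that turns a $C_4$ into a $C_5$; any alternative (a longer odd cycle, two subdivisions, or a more unbalanced split) either removes more cross edges or lowers $\rho$. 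This identifies $G$ as a member of the family $\{SK_{a',b'}:a'+b'=n-1\}$. Finally, since $\rho(SK_{a',b'})$ is (routinely checked to be) monotone in the product $a'b'$, which is maximized at the balanced split, I conclude $G\cong SK_{\lfloor(n-1)/2\rfloor,\lceil(n-1)/2\rceil}$, with equality holding throughout exactly for this graph.

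I expect the main obstacle to be the passage from the ``$O(1)$-edge-defect near-bipartite'' description to the exact extremal graph. The degree-sum upper bound on $\rho^2$ is lossy because it replaces every $x_v$ by $1$, so to exclude the competing near-extremal configurations one needs second-order control of the Perron weights, in particular showing that the subdivision vertex and the two endpoints of the subdivided edge carry precisely the weights dictated by $SK_{a,b}$. Coupled with this is the delicate requirement that every spectrum-increasing edge-switching operation used to clean up the defect must simultaneously preserve triangle-freeness and non-bipartiteness; generic local moves tend to violate one of these, so the switching arguments have to be tailored to keep an odd cycle alive while creating no triangle.
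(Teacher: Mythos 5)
A preliminary remark: the paper you were given does not prove this statement at all --- Theorem~\ref{th2} is quoted with a citation from Lin, Ning and Wu [LNW2021], so there is no internal proof to compare against, and your proposal must stand on its own. It does not. Your two first-moment estimates are correct (the computation $\rho^{2}x_{u^{*}}=d(u^{*})x_{u^{*}}+\sum_{v\notin N(u^{*})\cup\{u^{*}\}}|N(u^{*})\cap N(v)|x_{v}\leq\sum_{w\in N(u^{*})}d(w)\leq d(u^{*})\big(n-d(u^{*})\big)\leq\big\lfloor\frac{n^{2}}{4}\big\rfloor$ is the standard Nosal-type bound for triangle-free graphs), but the pivotal deduction --- that ``almost all of the slack vanishes'' and $G$ differs from the balanced complete bipartite graph in only $O(1)$ edges --- does not follow from them. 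The gap between your upper bound $\lfloor n^{2}/4\rfloor$ and the lower bound $\rho^{2}\gtrsim\lfloor(n-1)^{2}/4\rfloor$ is $\Theta(n)$, not $O(1)$: from $d(u^{*})\big(n-d(u^{*})\big)\geq\frac{(n-1)^{2}}{4}-O(1)$ you get only $|d(u^{*})-\frac{n}{2}|=O(\sqrt{n})$, and correspondingly the counts of missing cross-edges and excess internal edges are controlled only up to $O(n)$. This is visible in the present paper's analogous step: even with the stronger $B_{r+1}$-free hypothesis, Lemma~\ref{cla3} yields only $|B|+1\geq\frac{n}{2}-\sqrt{(r+1)n}$, and the authors then need the Chv\'{a}tal--Hanson bound, the residual index $\Gamma_{w}$, and a sequence of switching lemmas (Lemmas~\ref{cla4}--\ref{cla9}, Theorem~\ref{cla11}) to pin the structure down. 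Note also that your upper bound never uses non-bipartiteness, so by itself it can never get below the bipartite ceiling $\sqrt{\lfloor n^{2}/4\rfloor}$; a quantitative gain from the forced edge inside $B=V\setminus(N(u^{*})\cup\{u^{*}\})$ must be extracted somewhere, and your sketch contains no mechanism for it.

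The second genuine gap is the endgame, which you assert rather than prove: the claim that ``the cheapest way to realize a $C_{5}$ is to subdivide a single cross edge'' is precisely the content of the theorem, and near-extremal competitors are not dismissed by any computation. For instance, adding a vertex $v_{0}$ joined to $r_{1}$ vertices of one side and $r_{2}$ of the other of $K_{a,b}$ and deleting the $r_{1}r_{2}$ edges between these neighbourhoods gives a whole family of non-bipartite triangle-free graphs of which $SK_{a,b}$ is the case $r_{1}=r_{2}=1$; beating every such configuration (and pendant odd paths, unbalanced splits, etc.) requires explicit spectral comparisons, e.g.\ via characteristic polynomials of equitable quotient matrices in the style of Lemmas~\ref{lem4}--\ref{lem6} of this paper. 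Likewise your final monotonicity claim for $\rho(SK_{a',b'})$ in $a'b'$ is plausible and provable by exactly such a polynomial comparison, but it is not ``routine'' enough to omit, and every edge-switching move you invoke must be checked to preserve both triangle-freeness and non-bipartiteness simultaneously --- a difficulty you correctly identify but leave unresolved. As it stands, the proposal is a reasonable program whose skeleton matches the standard Perron-vector framework, but the two steps that carry the actual weight of the theorem (the stability passage and the comparison of extremal candidates) are missing.
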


Concerning Problem \ref{p2}, we determine the maximum spectral radius among all non-bipartite $B_{r+1}$-free graphs of order $n$ and characterize the unique spectral extremal graph, where $r\geq1$. Let $K_{\lfloor\frac{n-1}{2}\rfloor,\lceil\frac{n-1}{2}\rceil}^{r, r}$ be a graph obtained from $K_{\lfloor\frac{n-1}{2}\rfloor,\lceil\frac{n-1}{2}\rceil}$ by adding a new vertex $v_{0}$ such that $v_{0}$ has $r$ neighbours in each part of $K_{\lfloor\frac{n-1}{2}\rfloor,\lceil\frac{n-1}{2}\rceil}$ (see Fig. \ref{f1}).

\begin{figure}\label{f1}
\centering
\includegraphics[width=0.65\textwidth]{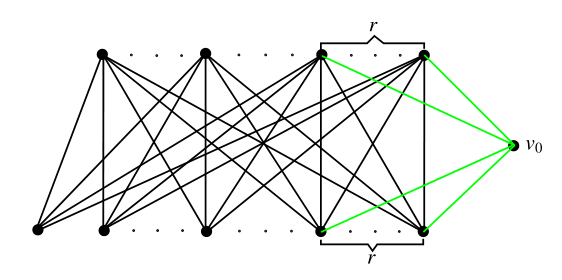}
\caption{The graph $K_{\lfloor\frac{n-1}{2}\rfloor,\lceil\frac{n-1}{2}\rceil}^{r, r}$.}\label{f1}
\end{figure}

\begin{theorem}\label{main}
Let $r\geq1$ and $n\geq8(r^{2}+r+4)$ be integers. If $G$ is a non-bipartite $B_{r+1}$-free graph of order $n$, then
\begin{eqnarray*}
\rho(G)\leq\rho\Big(K_{\lfloor\frac{n-1}{2}\rfloor,\lceil\frac{n-1}{2}\rceil}^{r, r}\Big),
\end{eqnarray*}
with equality if and only if $G\cong K_{\lfloor\frac{n-1}{2}\rfloor,\lceil\frac{n-1}{2}\rceil}^{r, r}$.
\end{theorem}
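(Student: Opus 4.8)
The plan is to let $G$ be a graph of order $n$ attaining the maximum spectral radius among all non-bipartite $B_{r+1}$-free graphs, to fix a Perron vector $\mathbf{x}=(x_v)_{v\in V(G)}$ with $\rho:=\rho(G)$, and to prove that $G$ must be isomorphic to the claimed extremal graph. Two facts are recorded at the outset. First, since $K_{\lfloor\frac{n-1}{2}\rfloor,\lceil\frac{n-1}{2}\rceil}^{r,r}$ is itself non-bipartite and $B_{r+1}$-free, extremality gives $\rho\ge\rho\big(K_{\lfloor\frac{n-1}{2}\rfloor,\lceil\frac{n-1}{2}\rceil}^{r,r}\big)$, and a short computation shows this already exceeds $\sqrt{\lfloor\frac{n-1}{2}\rfloor\lceil\frac{n-1}{2}\rceil}$; combined with the upper bound $\rho\le\rho(T_{n,2})$ from Theorem~\ref{th1} (applicable since $n\ge8(r^2+r+4)\ge\frac{13}{2}r$), this pins $\rho$ to within an additive $O(1)$ of $\frac{n}{2}$. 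Second, the $B_{r+1}$-free hypothesis has the purely local reformulation that every edge lies in at most $r$ triangles; equivalently, $G[N(u)]$ has maximum degree at most $r$ for every vertex $u$. I would also dispose of the routine reduction that $G$ is connected.

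The heart of the argument is a stability step turning the spectral lower bound into near-complete-bipartite structure. Because $B_{r+1}$ is color-critical with $\chi=3$ and the bipartite graph $T_{n,2}$ is extremal by Theorem~\ref{th1}, a graph whose spectral radius is this close to $\frac{n}{2}$ must admit a partition $V(G)=V_1\cup V_2$ with only $O(r)$ edges inside the parts and only $O(r)$ non-edges across; here I would choose $V_1,V_2$ from the neighbourhood of a vertex of maximum Perron weight and control the weights via the eigen-equations $\rho x_u=\sum_{v\sim u}x_v$ and $\rho^{2}x_u=\sum_{v\sim u}\sum_{w\sim v}x_w$. The decisive interaction is the following tension: if $xy$ is an edge inside, say, $V_1$, then every vertex of $V_2$ adjacent to both $x$ and $y$ is a common neighbour, so the local condition forces $|N(x)\cap N(y)\cap V_2|\le r$; since the cross-structure is near-complete, this means $x$ and $y$ together miss all but at most $r$ vertices of $V_2$. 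Thus each inside edge is paid for by a linear number of absent cross-edges, and a Rayleigh-quotient estimate shows this is strictly harmful to $\rho$. Consequently all inside edges and all cross non-edges must be concentrated on a bounded set of defective vertices, and this is precisely where I would invoke the Chv\'{a}tal--Hanson theorem and the residual index to bound, under the maximum-degree-$\le r$ constraint on neighbourhoods, both the number of defective vertices and the distribution of defect edges among them.

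It then remains to upgrade ``few defects'' to the exact graph. I would show there is a single exceptional vertex $v_0$: every other vertex is complete to the opposite part and has no neighbour inside its own part, for otherwise a local modification (adding a missing cross-edge, or deleting an inside edge and re-joining its endpoint across the cut) strictly increases $\mathbf{x}^{\top}A(G)\mathbf{x}$ while preserving $B_{r+1}$-freeness and --- using that $v_0$ alone must sustain an odd cycle --- non-bipartiteness. For $v_0$ itself the book count caps its neighbours: if $v_0$ had at least $r+1$ neighbours in one part and any neighbour $b$ in the other, the edge $v_0 b$ would lie in at least $r+1$ triangles, a forbidden $B_{r+1}$; hence $v_0$ has at most $r$ neighbours in each part, and since deleting any edge at $v_0$ only decreases $\rho$ while non-bipartiteness requires at least one neighbour on each side, optimality forces exactly $r$ on each side. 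Assembling these facts identifies $G$ with $K_{\lfloor\frac{n-1}{2}\rfloor,\lceil\frac{n-1}{2}\rceil}^{r,r}$, and the strictness of the modifications yields uniqueness.

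I expect the main obstacle to be precisely this passage from an approximate to an exact description: the global counting that certifies exactly one defective vertex rather than a bounded cluster, together with the verification that each candidate edge-swap simultaneously respects the non-bipartite and the $B_{r+1}$-free constraints, is delicate. It is here that the quantitative bounds from the Chv\'{a}tal--Hanson theorem and the residual-index bookkeeping must be pushed to their exact form in order to rule out every competitor of spectral radius $\ge\rho\big(K_{\lfloor\frac{n-1}{2}\rfloor,\lceil\frac{n-1}{2}\rceil}^{r,r}\big)$ other than the claimed extremal graph.
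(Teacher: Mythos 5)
Your outline follows essentially the same road map as the paper --- partition around a maximum-Perron-weight vertex $u^{*}$, the common-neighbourhood bound forced by $B_{r+1}$-freeness on each edge, Chv\'atal--Hanson via the matching number, the residual index, and exchange arguments that must preserve non-bipartiteness --- but the pivotal ``stability step'' on which your whole second paragraph rests is both unproved and false as stated. You claim that a graph with $\rho$ within $O(1)$ of $n/2$ admits a bipartition with $O(r)$ edges inside the parts \emph{and} $O(r)$ non-edges across. The extremal graph $K_{\lfloor\frac{n-1}{2}\rfloor,\lceil\frac{n-1}{2}\rceil}^{r,r}$ itself violates this: in any bipartition the vertex $v_{0}$, having degree $2r$, alone contributes roughly $\frac{n}{2}-2r$ missing cross-edges. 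There is no off-the-shelf spectral stability theorem to invoke here, and the paper builds the structure from scratch instead: $|A|\geq\lceil n/2\rceil$ for $A=N(u^{*})$ (Lemma \ref{cla1}); $\Delta(G^{*}[A])\leq r$, hence $e(A)\leq\nu(G^{*}[A])(r+1)$ by Chv\'atal--Hanson (Lemma \ref{cla2}); each edge inside $A$ forces at least $|B|-r+1$ missing $A$--$B$ edges (Lemma \ref{cla6}), whence $\nu(G^{*}[A])\leq1$ and $\nu(G^{*}[B])\leq1$ (Lemmas \ref{cla7}, \ref{cla8}), and non-bipartiteness forces $\nu(G^{*}[A])+\nu(G^{*}[B])=1$ (Lemma \ref{cla9}). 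Crucially, the residual index $\Gamma_{w}=d_{A}(w)(x_{u^{*}}-x_{w})$ is exactly the bookkeeping that lets one vertex with $\Theta(n)$ cross-deficiency survive the counting; the bare ``each inside edge is paid for by a linear number of absent cross-edges, which is strictly harmful to $\rho$'' estimate you propose would, taken at face value, exclude the extremal graph as well.

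The second genuine gap is that your plan defers the hardest part of the proof: deciding on which side of the partition the odd structure sits. After the matching bounds, both $e(A)=0$ with $\nu(G^{*}[B])=1$ (the configuration analogous to the $r=0$ extremal graph $SK_{\lfloor\frac{n-1}{2}\rfloor,\lceil\frac{n-1}{2}\rceil}$) and $e(A)\neq0$ remain consistent with everything you derive; the paper's Theorem \ref{cla11} excludes the former by a long residual-index case analysis (star $K_{1,b}$ with $b=1$ and $b\geq2$, triangle in $B$, with sub-cases on $d_{A}(w_{0})$), including a delicate verification that the rotation $G^{*}-u_{0}w_{0}+u_{0}w_{1}$ stays non-bipartite --- this requires analysing all $5$-cycles through $w_{0}w_{1}$ and deriving a contradiction from edge-maximality, and your parenthetical ``using that $v_{0}$ alone must sustain an odd cycle'' does not meet it. Similarly, your argument that $v_{0}$ with $r+1$ neighbours in one part creates a $B_{r+1}$ is valid only after the rest of the graph is already known to be complete bipartite, which is the conclusion being fought for; the star-versus-triangle case analysis in $A$ and Claims \ref{clac}--\ref{clag} do that work. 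Finally, you omit the balancing step $|s-t|\leq1$, which the paper obtains by comparing characteristic polynomials, $f(s-1,t+1,r,x)<f(s,t,r,x)$ for $x>\sqrt{2r}$ (Lemma \ref{lem6}). In short: right ingredients and the right overall shape, but the two load-bearing steps --- a provable substitute for the false stability claim, and the $e(A)\neq0$ dichotomy --- are missing, so the proposal is an outline rather than a proof.
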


The rest of the paper is organized as follows. In Section 2, we first give some useful lemmas which play an important role in our proof. In Section 3, we present the proof of Theorem \ref{main}.

\section{Preliminaries}\label{se2}
In this section, we introduce some notation and important results, which are essential to the proof of our main theorem.

Let $G$ be a simple graph of order $n$ with matching number $\nu(G)$ and maximum degree $\Delta(G)$.
Given two positive integers $\nu$ and $\Delta$, we denote $$f(\nu, \Delta) = \max\{e(G): \nu(G) \leq \nu, \Delta(G) \leq \Delta\}.$$
Chv\'{a}tal and Hanson \cite{CH1976} showed that $f(\nu, \Delta)=\big\lfloor\frac{n\Delta}{2}\big\rfloor$ for $n\leq2\nu$. For $n\geq2\nu+1$, they proved the following result.

\begin{lemma}[\!\cite{CH1976}]\label{lem1}

Let $n$, $\nu$, $\Delta$ be positive integers with $n\geq2\nu+1$.\vspace{0.3cm}

\noindent (i) If $\Delta\leq2\nu$ and $n\leq2\nu+\Big\lfloor\frac{\nu}{\lfloor(\Delta+1)/2\rfloor}\Big\rfloor$, then
\begin{align*}
\begin{split}
f(\nu, \Delta)=\left\{
\begin{array}{ll}
\min\bigg\{\Big\lfloor\frac{n\Delta}{2}\Big\rfloor, \nu\Delta+\frac{\Delta-1}{2}\Big\lfloor\frac{2(n-\nu)}{\Delta+3}\Big\rfloor \bigg\}, & if~\Delta~is~odd, \\
\frac{n\Delta}{2}, & if~\Delta~is~even.
\end{array}
\right.
\end{split}
\end{align*}

\noindent (ii) If $\Delta\leq2\nu$ and $n\geq2\nu+\Big\lfloor\frac{\nu}{\lfloor(\Delta+1)/2\rfloor}\Big\rfloor+1$, then

$$f(\nu, \Delta)=\nu\Delta+\bigg\lfloor\frac{\nu}{\lfloor(\Delta+1)/2\rfloor}\bigg\rfloor\bigg\lfloor\frac{\Delta}{2}\bigg\rfloor.$$

\noindent (iii) If $\Delta\geq2\nu+1$, then
\begin{align*}
\begin{split}
f(\nu, \Delta)=\left\{
\begin{array}{ll}
\max\bigg\{\binom{2\nu+1}{2}, \Big\lfloor\frac{\nu(n+\Delta-\nu)}{2}\Big\rfloor\bigg\}, & if~n\leq\nu+\Delta, \\
\nu\Delta, & if~n\geq\nu+\Delta+1.\\
\end{array}
\right.
\end{split}
\end{align*}
\end{lemma}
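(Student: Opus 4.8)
The plan is to establish $f(\nu,\Delta)$ by proving a matching upper bound and then exhibiting explicit extremal constructions, with the three regimes of the statement corresponding to which constraint is active: in case (iii) the degree bound $\Delta$ is large and the matching bound $\nu$ dominates, while in cases (i)--(ii) the two interact and the extremal graph is a packing of dense blocks. Throughout I would write $m=\nu(G)\le\nu$, fix a maximum matching $M$, and set $W=V(M)$ and $U=V(G)\setminus W$; since $M$ is maximum, $U$ is an independent set and $G$ admits no $M$-augmenting path. Because $U$ is independent, every edge meets $W$, so $e(G)=e(G[W])+e(W,U)$ and $\sum_{w\in W}d(w)\le 2m\Delta$.

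For the upper bound I would extract structure from the absence of augmenting paths: for each matched pair $\{a_i,b_i\}\in M$ one cannot have $a_i$ adjacent to some $u\in U$ and $b_i$ adjacent to a different $u'\in U$, since $u\,a_i\,b_i\,u'$ would then be an augmenting path. This forces the edges leaving $W$ to be highly concentrated and shows that $G$ is governed by a disjoint union of ``dense blocks'', each spending its own share of the matching budget. The heart of the matter is then an integer optimization: among all blocks of maximum degree $\le\Delta$, the most edge-efficient one per unit of matching number is a near-$\Delta$-regular graph on $2\lfloor(\Delta+1)/2\rfloor+1$ vertices, whose matching number is exactly $\lfloor(\Delta+1)/2\rfloor$ (the vertex count alone forcing the matching down). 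Using as many such blocks as the budget permits, namely $k=\lfloor\nu/\lfloor(\Delta+1)/2\rfloor\rfloor$ of them, and filling the leftover budget with stars $K_{1,\Delta}$, yields exactly the value $\nu\Delta+k\lfloor\Delta/2\rfloor$ of case (ii).

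For tightness I would exhibit the matching lower-bound constructions. In case (ii) take $k$ disjoint copies of the dense block above together with $\nu-k\lfloor(\Delta+1)/2\rfloor$ disjoint stars $K_{1,\Delta}$ and the rest of the vertices isolated; the block count, the residue $\nu-k\lfloor(\Delta+1)/2\rfloor$, and the parity floor $\lfloor\Delta/2\rfloor$ reproduce the formula, and the threshold $n\ge 2\nu+k+1$ is exactly what is needed to fit this packing. In case (iii), when $n\le\nu+\Delta$, two constructions compete: a clique $K_{2\nu+1}$ (whose maximum degree $2\nu<\Delta$ is admissible) giving $\binom{2\nu+1}{2}$ edges, versus a vertex-cover construction in which a set $B$ of $\nu$ vertices is joined completely to the other $n-\nu$ vertices and additionally induces a near-regular graph of degree $\Delta-(n-\nu)$; since $B$ is a cover of size $\nu$ the matching number is at most $\nu$, and a degree count gives $\lfloor\nu(n+\Delta-\nu)/2\rfloor$ edges, whence the maximum of the two. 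When $n\ge\nu+\Delta+1$ the degree bound forces the value down to $\nu\Delta$, realized by $K_{\nu,\Delta}$ plus isolated vertices. Case (i) is the small-$n$ regime, where the vertex count itself caps the edges at $\lfloor n\Delta/2\rfloor$; this competes with the packing bound, giving the $\min$, and for odd $\Delta$ a parity obstruction to full $\Delta$-regularity produces the second term $\frac{\Delta-1}{2}\lfloor\frac{2(n-\nu)}{\Delta+3}\rfloor$.

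The hard part will be the upper bound in cases (i)--(ii): turning ``$G$ resembles a disjoint union of dense blocks'' into an exact count. The augmenting-path restriction only controls the edges leaving $W$ locally, so one must argue, via the Gallai--Edmonds structure or a careful exchange argument, that concentrating degree into the near-$\Delta$-regular blocks above is globally optimal, that no mixing of block types or use of $U$ can beat it, and that the single partial block forced by the residue of $\nu$ modulo $\lfloor(\Delta+1)/2\rfloor$ is accounted for exactly. Verifying the threshold $n\le 2\nu+\lfloor\nu/\lfloor(\Delta+1)/2\rfloor\rfloor$ that separates (i) from (ii), and the parity floors throughout, is where the delicate integer bookkeeping, rather than any single clever idea, carries the proof.
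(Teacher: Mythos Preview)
The paper does not prove this lemma at all: it is quoted verbatim as a result of Chv\'{a}tal and Hanson \cite{CH1976} and used as a black box (only the crude consequence $f(\nu,\Delta)\le\nu(\Delta+1)$ is ever needed). So there is no ``paper's own proof'' to compare your proposal against.

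That said, your outline is a reasonable sketch of the Chv\'{a}tal--Hanson argument: the decomposition via a maximum matching $M$ into $W=V(M)$ and the independent set $U$, the augmenting-path restriction on how $U$ can attach to matched pairs, and the observation that near-$\Delta$-regular blocks on $2\lfloor(\Delta+1)/2\rfloor+1$ vertices are the most edge-efficient per unit of matching number are indeed the key ideas. The place where the sketch is still a sketch is precisely where you flag it: turning the local augmenting-path constraint into the exact global upper bound requires a genuine optimization/exchange argument (in the original, a careful case analysis; in modern treatments, often via Gallai--Edmonds), and the parity term $\frac{\Delta-1}{2}\lfloor 2(n-\nu)/(\Delta+3)\rfloor$ for odd $\Delta$ in case~(i) does not fall out automatically from what you have written---it needs a separate count of how many odd components can be made $\Delta$-regular versus $(\Delta-1)$-regular. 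For the purposes of this paper none of that matters, since only the coarse bound is used.
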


Based on Lemma \ref{lem1} and simple calculations, one can easily check that for every two positive integers $\nu \geq 1$ and $\Delta\geq 1$,
\begin{eqnarray}\label{e0}
f(\nu, \Delta)\leq \nu(\Delta+1).
\end{eqnarray}

\begin{lemma}[\!\cite{Wu2005}]\label{lem2}
Let $G$ be a connected graph with $v_{i}, v_{j}\in V(G)$ and $S\subseteq N_{G}(v_{j})\backslash N_{G}(v_{i})$. Assume that $G'=G-\{v_{j}v|v\in S\}+\{v_{i}v|v\in S\}$ and $(x_{1},\ldots,x_{n})^{T}$ is the Perron vector of $G$, where $x_{i}$ corresponds to $v_{i}$. If $x_{i}\geq x_{j}$, then $\rho(G')>\rho(G)$.
\end{lemma}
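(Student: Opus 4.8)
The plan is to combine the Rayleigh--quotient characterisation of the spectral radius with the strict positivity of the Perron vector. Throughout I treat $S$ as non-empty (if $S=\emptyset$ then $G'=G$ and the strict inequality cannot hold), and I note that the rotation $G'=G-\{v_jv:v\in S\}+\{v_iv:v\in S\}$ is well defined, creating no loops or multiple edges, since $S\subseteq N_G(v_j)\setminus N_G(v_i)$ (after also discarding $v_i$ itself, should it lie in $N_G(v_j)$). Let $x=(x_1,\dots,x_n)^T$ be the Perron vector of $G$, which by definition is a positive unit eigenvector for $\rho(G)$; in particular $x_u>0$ for every vertex $u$ because $G$ is connected.

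First I would compare the two quadratic forms evaluated at the \emph{same} vector $x$. Writing $y^TA(H)y=2\sum_{\{a,b\}\in E(H)}y_ay_b$ for any graph $H$, the only edges in which $G$ and $G'$ differ are those joining $S$ to $v_i$ or to $v_j$, so a direct computation gives
\begin{equation*}
x^TA(G')x-x^TA(G)x=2(x_i-x_j)\sum_{v\in S}x_v.
\end{equation*}
Using $\rho(G')=\max_{\|y\|=1}y^TA(G')y\geq x^TA(G')x$ together with $x^TA(G)x=\rho(G)$, this yields the baseline bound
\begin{equation*}
\rho(G')\geq\rho(G)+2(x_i-x_j)\sum_{v\in S}x_v.
\end{equation*}

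Next I would split according to whether the hypothesis $x_i\geq x_j$ is strict. If $x_i>x_j$, then since $S\neq\emptyset$ and every $x_v>0$ the correction term is strictly positive, and $\rho(G')>\rho(G)$ follows immediately. The subtle case is $x_i=x_j$, where the displayed bound only gives $\rho(G')\geq\rho(G)$; upgrading this to a \emph{strict} inequality is the main obstacle. Here I would argue by contradiction: assume $\rho(G')=\rho(G)$. Then the unit vector $x$ attains the maximum of the Rayleigh quotient of the symmetric matrix $A(G')$, and hence must be an eigenvector of $A(G')$ for its largest eigenvalue, i.e. $A(G')x=\rho(G)x$. This step is exactly where symmetry of the adjacency matrix is essential, and it does not require $G'$ to be connected.

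Finally I would derive a contradiction by comparing the two eigenvalue equations $A(G)x=\rho(G)x$ and $A(G')x=\rho(G)x$ at the coordinate $v_i$. In $G'$ the neighbourhood of $v_i$ is its old neighbourhood together with all of $S$, so
\begin{equation*}
\rho(G)x_i=\sum_{w\in N_G(v_i)}x_w+\sum_{v\in S}x_v,
\end{equation*}
while the $G$-equation at $v_i$ reads $\rho(G)x_i=\sum_{w\in N_G(v_i)}x_w$. Subtracting forces $\sum_{v\in S}x_v=0$, which is impossible because $S\neq\emptyset$ and $x$ is strictly positive. Hence $\rho(G')>\rho(G)$ in the equality case as well, completing the argument. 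The only points demanding care are the well-definedness of the rotation and the justification that a unit maximiser of the Rayleigh quotient of a symmetric matrix is genuinely a top eigenvector; both are routine, so the heart of the proof is the sign analysis of the correction term $2(x_i-x_j)\sum_{v\in S}x_v$ and its refinement in the boundary case $x_i=x_j$.
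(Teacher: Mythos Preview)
The paper does not give its own proof of this lemma: it is quoted from \cite{Wu2005} and used as a black box. Your argument is correct and is essentially the standard proof of this ``edge-rotation'' lemma---the Rayleigh-quotient comparison $\rho(G')\ge x^{T}A(G')x=\rho(G)+2(x_i-x_j)\sum_{v\in S}x_v$, together with the observation that equality would force $x$ to be a top eigenvector of $A(G')$ and hence $\sum_{v\in S}x_v=0$. Your handling of the boundary case $x_i=x_j$ is exactly what is needed and is often glossed over; the remaining caveats you flag (nonemptiness of $S$, excluding $v_i$ from $S$ so that the rotation stays simple) are implicit in the usual statement and do not affect the argument.
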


Let $M$ be a real $n\times n$ matrix, and let $V(G)=\{1,2,\ldots,n\}$. Given a partition
$\Pi: V(G)=V_{1}\cup V_{2}\cup\cdots\cup V_{k}$, $M$ can be correspondingly partitioned as
$$
M=
\begin{pmatrix}
M_{1,1}&M_{1,2}&\cdots&M_{1,k}\\
M_{2,1}&M_{2,2}&\cdots&M_{2,k}\\
\vdots&\vdots&\ddots&\vdots\\
M_{k,1}&M_{k,2}&\cdots&M_{k,k}
\end{pmatrix}
.$$
The quotient matrix of $M$ with respect to $\Pi$ is defined as the $k\times k$ matrix $B_{\Pi} = (b_{i,j})^{k}_{i,j=1}$,
where $b_{i,j}$ is the average value of all row sums of $M_{i,j}$.
The partition $\Pi$ is called equitable if each block $M_{i,j}$ of $M$ has constant row sum $b_{i,j}$.
Also, we say that the quotient matrix $B_{\Pi}$ is equitable if $\Pi$ is an equitable partition of $M$.

\begin{lemma}[\!\cite{Brouwer2011,Godsil2001}]\label{lem3}
Let $M$ be a real symmetric matrix and $\lambda(M)$ be its largest eigenvalue.
Let $R_{\Pi}$ be an equitable quotient matrix of $M$. Then the eigenvalues of $R_{\Pi}$ are also eigenvalues of $M$.
Furthermore, if $M$ is nonnegative and irreducible, then $\lambda(M)=\lambda(R_{\Pi})$.
\end{lemma}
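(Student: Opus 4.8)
The plan is to realize the quotient matrix $R_{\Pi}$ as the matrix of $M$ restricted to an invariant subspace, by means of the characteristic matrix of the partition. First I would introduce the $n\times k$ \emph{characteristic matrix} $S=(s_{i,j})$, where $s_{i,j}=1$ if $i\in V_{j}$ and $s_{i,j}=0$ otherwise; its columns are the indicator vectors of the parts, which are linearly independent because the $V_{j}$ are disjoint and nonempty, so $S$ has full column rank. The combinatorial heart of the argument is the identity
\begin{equation*}
MS=SR_{\Pi}.
\end{equation*}
Indeed, for $i\in V_{j}$ the $(i,\ell)$-entry of $MS$ is $\sum_{t\in V_{\ell}}M_{i,t}$, which is the $i$-th row sum of the block $M_{j,\ell}$; by equitability this row sum equals $b_{j,\ell}$, and $b_{j,\ell}$ is exactly the $(i,\ell)$-entry of $SR_{\Pi}$.

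With this identity the first assertion is immediate. If $(\mu,v)$ is an eigenpair of $R_{\Pi}$, then
\begin{equation*}
M(Sv)=(MS)v=(SR_{\Pi})v=S(\mu v)=\mu(Sv),
\end{equation*}
and since $S$ has full column rank we have $Sv\neq0$ whenever $v\neq0$, so $Sv$ is a genuine eigenvector of $M$ for $\mu$. Hence every eigenvalue of $R_{\Pi}$ is an eigenvalue of $M$; in particular $\lambda(R_{\Pi})\leq\lambda(M)$.

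For the equality when $M$ is nonnegative and irreducible, I would show that the Perron eigenvector of $M$ lives in the column space $\mathcal{C}:=\mathrm{col}(S)$. The identity $MS=SR_{\Pi}$ shows that $\mathcal{C}$ is $M$-invariant, and since $M$ is symmetric, $\mathcal{C}^{\perp}$ is $M$-invariant as well, so the whole space splits into these two $M$-invariant pieces. By the Perron--Frobenius theorem, $\lambda(M)$ is a simple eigenvalue with a strictly positive eigenvector $x$. Because $\lambda(M)$ is simple, $x$ must lie entirely in $\mathcal{C}$ or entirely in $\mathcal{C}^{\perp}$; but a vector in $\mathcal{C}^{\perp}$ satisfies $\sum_{i\in V_{j}}x_{i}=0$ for each $j$, which is impossible for a strictly positive $x$ with nonempty parts. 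Therefore $x\in\mathcal{C}$, say $x=Sv$ with $v\neq0$. Substituting into $Mx=\lambda(M)x$ gives $SR_{\Pi}v=\lambda(M)Sv$, and cancelling the injective $S$ yields $R_{\Pi}v=\lambda(M)v$. Thus $\lambda(M)$ is an eigenvalue of $R_{\Pi}$, so $\lambda(M)\leq\lambda(R_{\Pi})$, and combined with the reverse inequality from the previous paragraph we conclude $\lambda(M)=\lambda(R_{\Pi})$.

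The only delicate point is the last step: pinning down that the Perron vector sits in $\mathcal{C}$ rather than in $\mathcal{C}^{\perp}$. This is precisely where irreducibility (which guarantees a simple, strictly positive Perron eigenvector) and the nonemptiness of each part are both essential; without them one retains only the inequality $\lambda(R_{\Pi})\leq\lambda(M)$. An alternative route that avoids the invariance argument is to symmetrise, setting $D=S^{T}S=\mathrm{diag}(|V_{1}|,\ldots,|V_{k}|)$ and $\widehat{S}=SD^{-1/2}$, so that $\widehat{S}^{T}\widehat{S}=I_{k}$ and $\widehat{S}^{T}M\widehat{S}=D^{-1/2}S^{T}MSD^{-1/2}$ is symmetric with the same spectrum as $R_{\Pi}$; the Courant--Fischer min-max principle applied to this compression then gives $\lambda(R_{\Pi})\leq\lambda(M)$ directly, though recovering the equality still requires the Perron-vector argument above.
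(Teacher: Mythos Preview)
Your argument is correct and is essentially the standard textbook proof (indeed, precisely the one in the references \cite{Brouwer2011,Godsil2001} that the paper cites). Note, however, that the paper does \emph{not} supply its own proof of this lemma: it is quoted as a known result with references, so there is no in-paper argument to compare against. Your use of the characteristic matrix $S$, the interlacing-type identity $MS=SR_{\Pi}$, and the Perron--Frobenius step to place the Perron vector in $\mathrm{col}(S)$ is exactly the expected route; the only minor point worth making explicit in the main line (you do mention it in the alternative route) is that $R_{\Pi}=D^{-1}S^{T}MS$ is similar to the symmetric matrix $D^{-1/2}S^{T}MSD^{-1/2}$, so its eigenvalues are real and the inequality $\lambda(R_{\Pi})\leq\lambda(M)$ is meaningful.
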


First we introduce the definition of the graph $K_{s,t}^{r_{1}, r_{2}}$. Let $S$ and $T$ be two parts of $K_{s,t}$ with $|S|=s$, $|T|=t$ and $s+t=n-1$, and let $r_{1}, r_{2}$ and $r$ be positive integers with $\max\{r_{1}, r_{2}\}\leq r\leq \min\{s,t\}$. Define $K_{s,t}^{r_{1}, r_{2}}$ to be a graph obtained from the complete bipartite graph $K_{s,t}$ by adding a new vertex $v_{0}$ such that $|N_{S}(v_{0})|=r_{1}$ and $|N_{T}(v_{0})|=r_{2}$ (see Fig. \ref{f2}).
\begin{figure}
\centering
\includegraphics[width=0.75\textwidth]{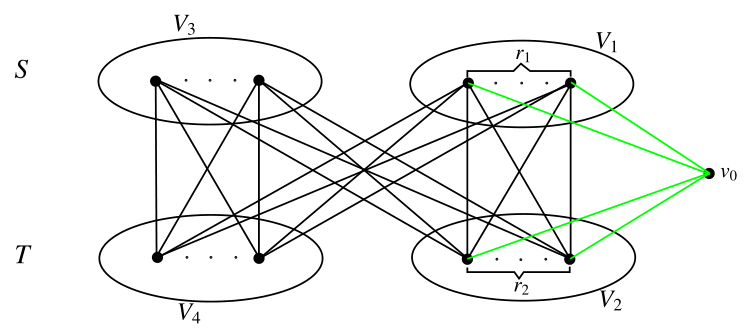}
\caption{The graph $K_{s,t}^{r_1, r_2}$.}
\label{f2}
\end{figure}
\begin{lemma}\label{lem4}
The spectral radius $\rho\big(K_{s,t}^{r, r}\big)$ of $K_{s,t}^{r, r}$ is the largest root of the equation
\begin{eqnarray*}
f(s,t,r,x)=x^{5}-(2r+st)x^{3}-2r^{2}x^{2}+(2str-sr^{2}-tr^{2})x=0.
\end{eqnarray*}
\end{lemma}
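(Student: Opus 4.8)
The plan is to exhibit an equitable partition of $V\big(K_{s,t}^{r,r}\big)$ whose quotient matrix has $f(s,t,r,x)$ as its characteristic polynomial, and then invoke Lemma \ref{lem3}. Write $S$ and $T$ for the two parts of the underlying $K_{s,t}$, and let $N_S(v_0)$ and $N_T(v_0)$ be the $r$ neighbours of $v_0$ in $S$ and in $T$. Since every vertex of $S$ is adjacent to every vertex of $T$, the only feature distinguishing vertices within a part is whether or not they lie in $N(v_0)$. This suggests the partition $\Pi\colon V=\{v_0\}\cup S_1\cup S_2\cup T_1\cup T_2$, where $S_1=N_S(v_0)$, $S_2=S\setminus S_1$, $T_1=N_T(v_0)$, $T_2=T\setminus T_1$, with $|S_1|=|T_1|=r$, $|S_2|=s-r$ and $|T_2|=t-r$. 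First I would verify that $\Pi$ is equitable: counting neighbours from a representative of each class into every class shows all blocks have constant row sums, giving the quotient matrix
\[
B_\Pi=\begin{pmatrix}
0 & r & 0 & r & 0\\
1 & 0 & 0 & r & t-r\\
0 & 0 & 0 & r & t-r\\
1 & r & s-r & 0 & 0\\
0 & r & s-r & 0 & 0
\end{pmatrix}.
\]
Because $K_{s,t}^{r,r}$ is connected, its adjacency matrix is nonnegative and irreducible, so Lemma \ref{lem3} yields $\rho\big(K_{s,t}^{r,r}\big)=\lambda(B_\Pi)$, the largest eigenvalue of $B_\Pi$.

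It then remains to compute $\det(xI-B_\Pi)$ and identify it with $f(s,t,r,x)$. I would expand the $5\times5$ determinant by first subtracting the $S_2$-row from the $S_1$-row and the $T_2$-row from the $T_1$-row (each operation isolates a single $x$ and collapses much of the matrix), and then cofactor-expand along the now-sparse first column; collecting terms should produce $x^5-(2r+st)x^3-2r^2x^2+(2str-sr^2-tr^2)x$. Note that $f$ has no constant term, which is consistent with $\det B_\Pi=0$ (the $S_1$- and $T_1$-rows coincide after the above subtractions), so $x=0$ is a root of $f$; since the Perron eigenvalue $\rho\big(K_{s,t}^{r,r}\big)$ is strictly positive and equals $\lambda(B_\Pi)$, it is precisely the largest root of $f(s,t,r,x)=0$, as claimed.

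The only genuine work is the determinant expansion, which is routine but error-prone because of the many $r$, $s-r$, $t-r$ entries. The cleanest route is to substitute $a=s-r$ and $b=t-r$, carry the computation symbolically, and re-expand $a,b$ only at the end, where cancellations such as $tr^2-r^2t=0$ and $-r^3+r^3=0$ collapse the linear coefficient to $2str-sr^2-tr^2$. I expect no conceptual obstacle here: once $\Pi$ is seen to be equitable, Lemma \ref{lem3} does the heavy lifting and the statement reduces to this piece of bookkeeping.
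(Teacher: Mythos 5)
Your proof is correct and takes essentially the same approach as the paper: it uses the identical equitable partition $\{v_0\}\cup N_S(v_0)\cup\big(S\setminus N_S(v_0)\big)\cup N_T(v_0)\cup\big(T\setminus N_T(v_0)\big)$, and your $B_\Pi$ is the paper's quotient matrix $R_\Pi$ up to a simultaneous reordering of the classes, so both arguments conclude via Lemma \ref{lem3} with the same characteristic polynomial. The paper likewise states the polynomial without displaying the determinant expansion, so your level of detail matches; your extra observations (irreducibility justifying $\rho=\lambda(B_\Pi)$, and the row subtractions explaining the root at $x=0$) are correct refinements, not deviations.
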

\begin{proof}
Let $V_1=N_{S}(v_0),$ $V_2=N_{T}(v_0),$ $V_3=S\backslash V_1$ and $V_4=T\backslash V_2$ (see Fig. \ref{f2}). The quotient matrix $R_{\Pi}$ of $K_{s,t}^{r, r}$ with respect to the equitable partition $\Pi: V(K_{s,t}^{r, r})=\{v_0\}\cup V_1\cup V_2\cup V_3\cup V_4$ is
$$
R_{\Pi}=
\begin{pmatrix}
0&r&r&0&0\\
1&0&r&0&t-r\\
1&r&0&s-r&0\\
0&0&r&0&t-r\\
0&r&0&s-r&0
\end{pmatrix}
.$$
Then the characteristic polynomial of $R_{\Pi}$ is
\begin{eqnarray*}
f(s,t,r,x)=x^{5}-(2r+st)x^{3}-2r^{2}x^{2}+(2str-sr^{2}-tr^{2})x.
\end{eqnarray*}
By Lemma \ref{lem3}, we have $\rho(K_{s,t}^{r, r})$ is the largest root of the equation $f(s,t,r,x)=0$.
\end{proof}

Now we are ready to estimate a better lower bound of $\rho\Big(K_{\lfloor\frac{n-1}{2}\rfloor,\lceil\frac{n-1}{2}\rceil}^{r, r}\Big)$ and $\rho^{2}\Big(K_{\lfloor\frac{n-1}{2}\rfloor,\lceil\frac{n-1}{2}\rceil}^{r, r}\Big)$.
\begin{lemma}\label{lem5}
Let $r\geq1$ and $n\geq 8(r^{2}+r+4)$. Then we have
\begin{eqnarray*}
\rho\Big(K_{\lfloor\frac{n-1}{2}\rfloor,\lceil\frac{n-1}{2}\rceil}^{r, r}\Big)>\frac{n-1}{2}-\frac{1}{4(n-1)}>4(r^{2}+r+2)
~~~and~~~\rho^{2}\Big(K_{\lfloor\frac{n-1}{2}\rfloor,\lceil\frac{n-1}{2}\rceil}^{r, r}\Big)>\bigg\lfloor\frac{(n-1)^{2}}{4}\bigg\rfloor.
\end{eqnarray*}
\end{lemma}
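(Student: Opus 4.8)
The plan is to lean on Lemma~\ref{lem4}, which identifies $\rho:=\rho\big(K_{\lfloor\frac{n-1}{2}\rfloor,\lceil\frac{n-1}{2}\rceil}^{r,r}\big)$ as the largest root of $f(s,t,r,x)=0$, where I set $s=\lfloor\frac{n-1}{2}\rfloor$, $t=\lceil\frac{n-1}{2}\rceil$, so that $s+t=n-1$ and $st=\lfloor\frac{(n-1)^{2}}{4}\rfloor$. Since $\rho>0$, I may cancel the factor $x$ from $f$ and work with the quartic
$$g(x)=x^{4}-(2r+st)x^{2}-2r^{2}x+(2str-sr^{2}-tr^{2}),$$
whose largest root is again $\rho$. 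Because $g$ has positive leading coefficient, $g(x)\to+\infty$ as $x\to+\infty$, so to prove $\rho>x_{0}$ for a chosen test point it suffices to check $g(x_{0})<0$: this forces a root of $g$ into $(x_{0},+\infty)$. I would obtain both inequalities by evaluating $g$ at two well-chosen points.

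For the bound $\rho^{2}>\lfloor\frac{(n-1)^{2}}{4}\rfloor$ I would substitute $x=\sqrt{st}$. Then $x^{4}=(st)^{2}$ cancels against the $(st)^{2}$ coming from $(2r+st)x^{2}$, the terms $-2rst$ and $2str$ cancel, and everything collapses to
$$g(\sqrt{st})=-r^{2}\big(s+t+2\sqrt{st}\big)<0.$$
Hence $\rho>\sqrt{st}$, i.e. $\rho^{2}>st=\lfloor\frac{(n-1)^{2}}{4}\rfloor$. This step is clean and requires no parity discussion.

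The first inequality is where I expect the real work, since the test point $x_{0}=\frac{n-1}{2}-\frac{1}{4(n-1)}$ carries an awkward correction and $st=\lfloor\frac{(n-1)^{2}}{4}\rfloor$ depends on the parity of $n-1$. To treat both parities at once I would write $m=n-1$, record the exact identity $x_{0}^{2}=\frac{m^{2}-1}{4}+\frac{1}{16m^{2}}$, and use the single bound $st\geq\frac{m^{2}-1}{4}$ (an equality for odd $m$). Writing the $st$-dependent part of $g(x_{0})$ as $-st\,(x_{0}^{2}-2r)$ with $x_{0}^{2}-2r>0$, replacing $st$ by its lower bound only increases $g(x_{0})$; after inserting the identities for $x_{0}^{2}$ and $x_{0}^{4}$ the quartic terms telescope and one is left with
$$g(x_{0})\leq\frac{1}{64}+\frac{r^{2}}{2m}-2mr^{2}+(\text{lower-order terms}).$$
Since $m=n-1\geq 8r^{2}+8r+31$ gives $\frac{r^{2}}{2m}\leq\frac{1}{16}$ while $2mr^{2}$ is bounded below by a large multiple of $1$, the right-hand side is negative, so $g(x_{0})<0$ and therefore $\rho>x_{0}$. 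The main obstacle is precisely this bookkeeping, but it reduces to a dominant-term comparison once the floor is absorbed into $st\geq\frac{m^{2}-1}{4}$.

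Finally, the estimate $\frac{n-1}{2}-\frac{1}{4(n-1)}>4(r^{2}+r+2)$ is elementary: from $n\geq 8(r^{2}+r+4)$ I get $\frac{n-1}{2}\geq 4r^{2}+4r+\frac{31}{2}$, while $\frac{1}{4(n-1)}<1$, so the left-hand side exceeds $4r^{2}+4r+\frac{29}{2}>4(r^{2}+r+2)$. Assembling the three estimates yields the two displayed inequalities of the lemma.
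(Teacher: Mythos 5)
Your proposal is correct, and while it rests on the same two pillars as the paper's proof --- Lemma \ref{lem4} for the characteristic polynomial, plus a sign check of that polynomial at a test point to force a large root --- your organization is genuinely different and cleaner. The paper splits into parity cases from the start, working with $f\big(\frac{n-1}{2},\frac{n-1}{2},r,x\big)$ for odd $n$ and $f\big(\frac{n-2}{2},\frac{n}{2},r,x\big)$ for even $n$; in the odd case it evaluates at the simpler point $\frac{n-1}{2}$ (obtaining $-(n-1)^{2}r^{2}<0$), and only in the even case does it use the awkward point $\frac{n-1}{2}-\frac{1}{4(n-1)}$, where it asserts $f<0$ without displaying the computation. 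Your evaluation $g(\sqrt{st})=-r^{2}\big(\sqrt{s}+\sqrt{t}\,\big)^{2}<0$ dispatches the second inequality in one stroke for both parities (for odd $n$ it specializes exactly to the paper's computation, since then $\sqrt{st}=\frac{n-1}{2}$), and your unified treatment of the first inequality --- via $st\geq\frac{m^{2}-1}{4}$, the identity $x_{0}^{2}=\frac{m^{2}-1}{4}+\frac{1}{16m^{2}}$, and monotonicity of $g(x_{0})$ in $st$ (legitimate since $x_{0}^{2}>2r$, which holds with huge room to spare for $m\geq 8r^{2}+8r+31$) --- replaces the paper's unverified even-case sign claim with an explicit dominant-term estimate $g(x_{0})\leq\frac{1}{64}+\frac{r^{2}}{2m}-2mr^{2}+(\text{lower order})<0$, which I have checked is what the expansion gives. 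The only point worth making explicit in a final write-up is the passage from $f$ to $g$: the nonzero roots of $f$ are exactly the roots of $g$, and $\rho>0$, so $\rho$ is indeed the largest root of $g$ and $g(x_{0})<0$ forces $\rho>x_{0}$; this is immediate but deserves a line. In short, your route proves slightly more (it bounds $\rho(K_{s,t}^{r,r})$ below by $\sqrt{st}$ for general $s,t$) at no extra cost, whereas the paper's parity split keeps each individual computation more concrete.
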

\begin{proof}
Let $s=t=\frac{n-1}{2}$ if $n$ is odd. It follows from Lemma \ref{lem4} that
$$f\Big(\frac{n-1}{2},\frac{n-1}{2},r,x\Big)=x\bigg(x^{4}-\Big(\frac{n^{2}}{4}-\frac{n}{2}+2r+\frac{1}{4}\Big)x^{2}-2r^{2}x-(n-1)r^{2}+\frac{1}{2}(n-1)^{2}r\bigg).$$
By Lemma \ref{lem3}, we have $\rho\Big(K_{\frac{n-1}{2},\frac{n-1}{2}}^{r, r}\Big)$ is the largest root of $f\big(\frac{n-1}{2},\frac{n-1}{2},r,x\big)=0$.
Since $f\big(\frac{n-1}{2},\frac{n-1}{2},r,\frac{n-1}{2}\big)=-(n-1)^{2}r^{2}<0,$ we have $\rho\Big(K_{\frac{n-1}{2},\frac{n-1}{2}}^{r, r}\Big)>\frac{n-1}{2}$, and hence $\rho^{2}\Big(K_{\frac{n-1}{2},\frac{n-1}{2}}^{r, r}\Big)>\frac{(n-1)^{2}}{4}=\big\lfloor\frac{(n-1)^{2}}{4}\big\rfloor$.

Let $s=\frac{n-2}{2}$ and $t=\frac{n}{2}$ if $n$ is even. Then
$$f\Big(\frac{n-2}{2},\frac{n}{2},r,x\Big)=x\bigg(x^{4}-\Big(\frac{n^{2}}{4}-\frac{n}{2}+2r\Big)x^{2}-2r^{2}x+\frac{1}{2}n^{2}r-nr^{2}-nr+r^{2}\bigg).$$
It follows from Lemma \ref{lem3} that $\rho\Big(K_{\frac{n-2}{2},\frac{n}{2}}^{r, r}\Big)$ is the largest root of $f\big(\frac{n-2}{2},\frac{n}{2},r,x\big)=0$.
Since $f\big(\frac{n-2}{2},\frac{n}{2},r,\frac{n-1}{2}-\frac{1}{4(n-1)}\big)<0,$ we have $\rho\Big(K_{\frac{n-2}{2},\frac{n}{2}}^{r, r}\Big)>\frac{n-1}{2}-\frac{1}{4(n-1)}$. Furthermore, $\rho^{2}\Big(K_{\frac{n-2}{2},\frac{n}{2}}^{r, r}\Big)>\frac{(n-1)^{2}}{4}-\frac{1}{4}+\frac{1}{16(n-1)^{2}}>\big\lfloor\frac{(n-1)^{2}}{4}\big\rfloor$.

Therefore, regardless of the parity of $n$, we can always obtain that $\rho\Big(K_{\lfloor\frac{n-1}{2}\rfloor,\lceil\frac{n-1}{2}\rceil}^{r, r}\Big)>\frac{n-1}{2}-\frac{1}{4(n-1)}$ and $\rho^{2}\Big(K_{\lfloor\frac{n-1}{2}\rfloor,\lceil\frac{n-1}{2}\rceil}^{r, r}\Big)>\big\lfloor\frac{(n-1)^{2}}{4}\big\rfloor$. Note that $n\geq 8(r^{2}+r+4)$. Then we have $\rho\Big(K_{\lfloor\frac{n-1}{2}\rfloor,\lceil\frac{n-1}{2}\rceil}^{r, r}\Big)>\frac{n-1}{2}-\frac{1}{4(n-1)}>4(r^{2}+r+2)$.
\end{proof}

\section{Proofs}\label{se3}
Suppose that $G^{*}$ attains the maximum spectral radius among all the non-bipartite $B_{r+1}$-free graphs of order $n$, where $r\geq1$ and $n\geq8(r^{2}+r+4)$. Firstly, we claim that $G^{*}$ is connected. Otherwise, assume that $G_{1}$ is a connected component of $G^{*}$ with $\rho(G_{1})=\rho(G^{*})$. By adding an edge $e_{0}$ between $G_{1}$ and any other connected component of $G^{*},$ we get a new graph $G_{2}.$ Note that $e_0$ is a cut edge. Then $G_{2}$ is still a non-bipartite $B_{r+1}$-free graph. However, $\rho(G_{2})>\rho(G_{1})=\rho(G^{*}),$ which contradicts the maximality of $\rho(G^{*}).$ So $G^{*}$ is connected.  Let $\textbf{x}=(x_{v})_{v\in V(G^{*})}$ be the Perron vector of $G^{*}$. Let $u^{*}$ be a vertex of $G^{*}$ such that $x_{u^{*}}=\max\{x_{v}: v\in V(G^{*})\}$. Define $A=N(u^{*})$ and $B=V(G^{*})\backslash(\{u^{*}\}\cup A)$.
Note that $K_{\lfloor\frac{n-1}{2}\rfloor,\lceil\frac{n-1}{2}\rceil}^{r, r}$ is a non-bipartite $B_{r+1}$-free graph of order $n$.
By Lemma \ref{lem5}, we have
\begin{eqnarray}\label{eq1}
\rho(G^{*})\geq\rho\Big(K_{\lfloor\frac{n-1}{2}\rfloor,\lceil\frac{n-1}{2}\rceil}^{r, r}\Big)>\frac{n-1}{2}-\frac{1}{4(n-1)}>4(r^{2}+r+2)
\end{eqnarray}
and
\begin{eqnarray}\label{eq2}
\rho^{2}(G^{*})\geq\rho^{2}\Big(K_{\lfloor\frac{n-1}{2}\rfloor,\lceil\frac{n-1}{2}\rceil}^{r, r}\Big)>\bigg\lfloor\frac{(n-1)^{2}}{4}\bigg\rfloor.
\end{eqnarray}

In the following, we write $\rho(G^{*})$ as $\rho$ for short. First, we prove a lower bound of the number of vertices in $A$.

\begin{lemma}\label{cla1}
$|A|\geq\big\lceil\frac{n}{2}\big\rceil$.
\end{lemma}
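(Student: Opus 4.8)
The plan is to bound $|A| = d(u^*)$ from below by relating the degree of the vertex $u^*$ carrying the maximal Perron entry to $\rho$, and then use the lower bound $\rho > \frac{n-1}{2} - \frac{1}{4(n-1)}$ from \eqref{eq1}. First I would write down the eigenvalue equation at $u^*$, namely $\rho x_{u^*} = \sum_{v \in A} x_v$, which already gives $\rho x_{u^*} \le |A|\, x_{u^*}$ and hence the crude bound $|A| \ge \rho > \frac{n-1}{2} - \frac{1}{4(n-1)}$. Since $|A|$ is an integer, this immediately forces $|A| \ge \big\lceil \frac{n-1}{2} \big\rceil$, which is only slightly weaker than the claimed $\big\lceil \frac{n}{2}\big\rceil$; the entire difficulty is to gain the last unit when $n$ is even.

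To close this gap I would apply the quadratic (two-step) eigenvalue inequality to sharpen the estimate. Writing $\rho^2 x_{u^*} = \sum_{v \in A}\sum_{w \in N(v)} x_w$ and bounding the right-hand side by counting walks of length two from $u^*$, one gets $\rho^2 \le $ (number of such walks divided out), which by \eqref{eq2} satisfies $\rho^2 > \big\lfloor \frac{(n-1)^2}{4}\big\rfloor$. The strategy here is to suppose for contradiction that $|A| \le \big\lceil \tfrac{n}{2}\big\rceil - 1$ and derive that the number of walks of length two from $u^*$ is too small to support $\rho^2 > \big\lfloor \frac{(n-1)^2}{4}\big\rfloor$. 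Each vertex in $A$ has at most $n-1$ neighbours, but more carefully, because $G^*$ is $B_{r+1}$-free, the edges inside $A = N(u^*)$ are severely restricted: any edge $v_1 v_2$ in $G^*[A]$ forms a triangle $u^* v_1 v_2$, and the common neighbourhood of any edge cannot contain $r+1$ vertices, so $G^*[A]$ contains no $B_{r+1}$ sharing the edge with many apices. This bookfree structure limits $e(A)$ and thus controls the walk count.

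The key step, and the one I expect to be the main obstacle, is controlling the contribution of edges within $A$ and from $A$ to $B$ precisely enough that the walk bound beats $\big\lfloor \frac{(n-1)^2}{4}\big\rfloor$ when $|A|$ is assumed one smaller than claimed. Concretely, I would estimate $\rho^2 x_{u^*} = \sum_{v\in A} d(v)\, x_v \le x_{u^*}\sum_{v \in A} d(v)$ is too lossy; instead one wants $\rho^2 \le |A| + 2e(A) + e(A,B)$-type counting weighted by $x$, combined with the $B_{r+1}$-free cap $e(A) \le \tfrac{1}{2} r |A|$ on edges inside the neighbourhood (since every vertex of $A$ lies in at most $r$ triangles through $u^*$). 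Plugging $|A| = \big\lceil \tfrac{n}{2}\big\rceil - 1$ and using $n \ge 8(r^2+r+4)$ should make the resulting upper estimate for $\rho^2$ strictly less than $\big\lfloor \frac{(n-1)^2}{4}\big\rfloor$, contradicting \eqref{eq2}. The arithmetic of matching the floor term with the even/odd parity of $n$ is where care is needed; the linear lower bound on $n$ in terms of $r$ is exactly what guarantees the book-free correction term $r|A|$ is negligible against the dominant $\frac{(n-1)^2}{4}$.
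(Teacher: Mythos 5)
Your opening step is correct and coincides with the paper's: the eigen-equation at $u^{*}$ gives $\rho x_{u^{*}}=\sum_{u\in A}x_{u}\leq|A|x_{u^{*}}$, hence $|A|\geq\rho$. But you have the parity analysis backwards. For \emph{even} $n$ the uniform bound $\rho>\frac{n-1}{2}-\frac{1}{4(n-1)}>\frac{n}{2}-1$ together with integrality of $|A|$ already yields $|A|\geq\frac{n}{2}=\big\lceil\frac{n}{2}\big\rceil$; there is no missing unit there. The problematic case is \emph{odd} $n$, where $\frac{n-1}{2}$ is an integer and the uniform bound only gives $|A|\geq\frac{n-1}{2}$, one short of $\big\lceil\frac{n}{2}\big\rceil=\frac{n+1}{2}$. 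The paper closes this gap not with extra combinatorics but by invoking the parity-refined estimate established inside the proof of Lemma \ref{lem5}: for odd $n$ one has $f\big(\frac{n-1}{2},\frac{n-1}{2},r,\frac{n-1}{2}\big)=-(n-1)^{2}r^{2}<0$, so $\rho>\frac{n-1}{2}$ \emph{strictly}, and integrality forces $|A|\geq\frac{n+1}{2}$. That strict inequality at the integer point is the one idea your proposal lacks.

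Your proposed repair --- assume $|A|\leq\big\lceil\frac{n}{2}\big\rceil-1$ and derive a contradiction with (\ref{eq2}) from a two-step walk count --- cannot work as stated. Since $|A|+(|B|+1)=n$, the dominant term in any such bound is the product $|A|(|B|+1)$, which is \emph{maximized}, not minimized, when $|A|$ is near $\frac{n}{2}$: taking $|A|=\frac{n-1}{2}$ (odd $n$) gives $|A|(|B|+1)=\frac{n^{2}-1}{4}>\big\lfloor\frac{(n-1)^{2}}{4}\big\rfloor$, so the estimate $\rho^{2}\leq|A|(|B|+1)+r|A|$ is perfectly consistent with (\ref{eq2}) and yields no contradiction; the $B_{r+1}$-free cap on $e(A)$ only adds a positive term and cannot help. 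Walk counting of this kind is exactly what the paper uses in Lemma \ref{cla3} to bound $|B|+1$ from below, i.e., $|A|$ from \emph{above}, where a large $|A|$ genuinely shrinks the product $|A|(|B|+1)$; it is structurally incapable of bounding $|A|$ from below, because small $|A|$ makes the upper bound looser rather than tighter. So your first paragraph is essentially the paper's proof minus its decisive ingredient, and your second and third paragraphs pursue a dead end.
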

\begin{proof}
Using the eigen-equation, we have $\rho x_{u^{*}}=\sum_{u\in A}x_{u}\leq|A|x_{u^{*}}$. From the proof of Lemma \ref{lem5}, we know that $|A|\geq\rho>\frac{n-1}{2}$ for odd $n$, and so $|A|\geq\frac{n+1}{2}$. If $n$ is even, then $|A|\geq\rho>\frac{n-1}{2}-\frac{1}{4(n-1)}$, which implies that $|A|\geq\frac{n}{2}$. Thus, we have $|A|\geq\big\lceil\frac{n}{2}\big\rceil$.
\end{proof}

By Lemma \ref{lem1}, the number of edges in $A$ can also be well estimated.

\begin{lemma}\label{cla2}
$d_{A}(u)\leq r$ for each vertex $u\in A$. Moreover, $e(A)\leq\nu(G^{*}[A])(r+1)$.
\end{lemma}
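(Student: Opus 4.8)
The plan is to prove the two assertions in turn, with the degree bound of the first part feeding directly into the edge count of the second via inequality (\ref{e0}).

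For the bound $d_A(u)\le r$, the essential point is that $A=N(u^*)$, so every vertex of $A$ is adjacent to $u^*$; I would exploit this to translate the forbidden book into a common-neighbour count on a single edge. Fix $u\in A$ and consider the edge $u^*u$, which exists precisely because $u\in N(u^*)$. If $w\in N_A(u)$, then $w\in A$ forces $w\sim u^*$, while $w\sim u$ by definition, so $\{u^*,u,w\}$ spans a triangle containing the edge $u^*u$. Conversely, any common neighbour of $u^*$ and $u$ lies in $N(u^*)\cap N(u)=A\cap N(u)=N_A(u)$, the equality $N(u^*)=A$ being exactly what guarantees that no common neighbour escapes $A$. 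Hence the triangles of $G^*$ through $u^*u$ are in bijection with $N_A(u)$, and their number equals $d_A(u)$. Were $d_A(u)\ge r+1$, the edge $u^*u$ together with $r+1$ such common neighbours would form a copy of $B_{r+1}$, contradicting that $G^*$ is $B_{r+1}$-free. Therefore $d_A(u)\le r$ for every $u\in A$.

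For the edge bound I would apply the extremal function $f$ to the induced subgraph $G^*[A]$. By the first part, $\Delta(G^*[A])\le r$. Writing $\nu=\nu(G^*[A])$ for its matching number, the graph $G^*[A]$ satisfies both constraints $\nu(G^*[A])\le\nu$ and $\Delta(G^*[A])\le r$ appearing in the definition $f(\nu,r)=\max\{e(G):\nu(G)\le\nu,\ \Delta(G)\le r\}$, so it is one of the graphs over which that maximum is taken. Consequently $e(A)=e(G^*[A])\le f\big(\nu(G^*[A]),r\big)$. Invoking inequality (\ref{e0}) with $\Delta=r$ gives $f\big(\nu(G^*[A]),r\big)\le\nu(G^*[A])(r+1)$, which is exactly the claimed bound $e(A)\le\nu(G^*[A])(r+1)$.

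I do not expect any genuine obstacle here: once the correspondence between triangles on the edge $u^*u$ and the set $N_A(u)$ is observed, the first part is immediate, and the second is a direct substitution into (\ref{e0}), which already packages the Chv\'{a}tal--Hanson theorem in the precise form required. The only step warranting explicit care is verifying that every common neighbour of $u^*$ and $u$ really does lie in $A$, which holds because $A$ is the \emph{entire} neighbourhood of $u^*$ rather than a proper subset of it.
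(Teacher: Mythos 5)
Your proposal is correct and follows essentially the same route as the paper: a contradiction via the edge $u^*u$ spanning a $B_{r+1}$ when $d_A(u)\geq r+1$ (using $A=N(u^*)$), followed by $e(A)\leq f\big(\nu(G^{*}[A]),r\big)\leq\nu(G^{*}[A])(r+1)$ from (\ref{e0}). The bijection between triangles on $u^*u$ and $N_A(u)$ that you verify carefully is in fact more than is needed (one direction suffices for the contradiction), but it is accurate and does no harm.
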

\begin{proof}
Suppose to the contrary that there exists a vertex $u'\in A$ with $d_{A}(u')\geq r+1$. Then $G^{*}[\{u',u^{*}\}\cup N_{A}(u')]$ contains a book $B_{r+1}$, which contradicts that $G^{*}$ is $B_{r+1}$-free. Hence $d_{A}(u)\leq r$ for each vertex $u\in A$, and so $\Delta(G^{*}[A])\leq r$. By (\ref{e0}), we have
\begin{eqnarray*}
e(A)\leq f\big(\nu(G^{*}[A]), r\big)\leq\nu(G^{*}[A])(r+1),
\end{eqnarray*}
as desired.
\end{proof}

Now we can derive a good lower bound of the number of vertices in $B$.

\begin{lemma}\label{cla3}
For $n\geq8(r^{2}+r+4)$ and $r\geq1$, we have $|B|+1\geq\frac{n}{4}\geq5r+7.$
\end{lemma}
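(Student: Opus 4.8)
The statement splits into two inequalities, and only the first, $|B|+1\ge \frac n4$, requires real work. The second, $\frac n4\ge 5r+7$, is immediate: substituting $n\ge 8(r^2+r+4)$ gives $\frac n4\ge 2r^2+2r+8$, and $2r^2+2r+8-(5r+7)=(2r-1)(r-1)\ge 0$ for every integer $r\ge 1$. Writing $a=|A|$ and $b=|B|$, so that $a+b=n-1$ by the partition $V(G^*)=\{u^*\}\cup A\cup B$, the inequality $|B|+1\ge\frac n4$ is equivalent to the upper bound $a\le\frac{3n}{4}$ on the neighbourhood size. The plan is to prove this by contradiction, pitting an upper estimate for $\rho^2$ against the lower bound $\rho^2>\lfloor\frac{(n-1)^2}{4}\rfloor$ supplied by (\ref{eq2}).

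The engine is the two-step eigen-equation at the dominant vertex $u^*$. From $\rho^2 x_{u^*}=\sum_{w\in A}\sum_{v\in N(w)}x_v$ and the partition $N(w)=\{u^*\}\cup N_A(w)\cup N_B(w)$ for each $w\in A$, the maximality $x_v\le x_{u^*}$ collapses the right-hand side to $(|A|+2e(A)+e(A,B))\,x_{u^*}$, so that $\rho^2\le |A|+2e(A)+e(A,B)$. Into this I would feed the structural inputs already available: Lemma \ref{cla2} gives $d_A(w)\le r$, whence $2e(A)=\sum_{w\in A}d_A(w)\le |A|r$; and the crude count $e(A,B)=\sum_{v\in B}d_A(v)\le |A|\,|B|$ is all that is needed. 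Using $a+b=n-1$ this produces the clean quadratic bound $\rho^2\le |A|(1+r+|B|)=a(n+r-a)$.

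To close, suppose $|B|+1<\frac n4$, i.e. $a>\frac{3n}{4}$. The parabola $g(a)=a(n+r-a)$ is decreasing for $a>\frac{n+r}{2}$, and $\frac{3n}{4}>\frac{n+r}{2}$ since $n>2r$; hence $\rho^2\le g(a)<g(\tfrac{3n}{4})=\frac{3n^2}{16}+\frac{3nr}{4}$. A short calculation using $n\ge 8(r^2+r+4)$ (which in particular gives $n>12r+8$) shows this quantity is strictly smaller than $\frac{n(n-2)}{4}=\frac{(n-1)^2-1}{4}\le\lfloor\frac{(n-1)^2}{4}\rfloor$, contradicting (\ref{eq2}). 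Therefore $a\le\frac{3n}{4}$, that is $|B|+1\ge\frac n4$, and combined with the elementary first step we obtain $|B|+1\ge\frac n4\ge 5r+7$.

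The only genuinely delicate point is arriving at the clean quadratic estimate $\rho^2\le a(n+r-a)$: one must notice that the modest target $\frac n4$ permits the lossy bound $e(A,B)\le |A||B|$ in place of a sharper one, so that no further structural analysis of the edges between $A$ and $B$ is required. Once this bound is in hand, the remainder is just monotonicity of a downward parabola together with the arithmetic afforded by the hypothesis on $n$.
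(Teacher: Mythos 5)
Your proof is correct and takes essentially the same route as the paper: both rest on the identical two-step eigen-equation estimate $\rho^{2}\leq|A|(|B|+1)+r|A|=a(n+r-a)$ (via $d_{A}(u)\leq r$ from Lemma \ref{cla2} and the lossy bound $e(A,B)\leq|A||B|$), played off against the lower bound (\ref{eq2}). The only difference is organizational: the paper writes $|A|=\frac{n}{2}+\theta$ using Lemma \ref{cla1} and solves the resulting quadratic inequality in $\theta$, whereas you argue by contradiction via monotonicity of the downward parabola $a\mapsto a(n+r-a)$, which incidentally lets you bypass Lemma \ref{cla1} altogether.
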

\begin{proof}
By Lemma \ref{cla1}, we have $|A|\geq\big\lceil\frac{n}{2}\big\rceil$. Now we assume that $|A|=\frac{n}{2}+\theta,$ where $\theta\geq0$. Note that $|A|+|B|+1=n$. Then $|B|+1=\frac{n}{2}-\theta$. By using the eigen-equation and Lemma \ref{cla2}, we have
\begin{eqnarray*}
\rho^{2}x_{u^{*}}&=&|A|x_{u^{*}}+\sum_{u\in A}d_{A}(u)x_{u}+\sum_{w\in B}d_{A}(w)x_{w}\\
&\leq&|A|x_{u^{*}}+r|A|x_{u^{*}}+|A||B|x_{u^{*}}\\
&=&|A|\big(|B|+1\big)x_{u^{*}}+r|A|x_{u^{*}}\\
&=&\Bigg(\frac{n^{2}}{4}-\theta^{2}+r\bigg(\frac{n}{2}+\theta\bigg)\Bigg)x_{u^{*}}.
\end{eqnarray*}
By (\ref{eq2}), we know that $\rho^{2}>\big\lfloor\frac{(n-1)^{2}}{4}\big\rfloor\geq\frac{n^{2}-2n}{4}$. Hence $\theta^{2}-r\theta<\frac{(r+1)n}{2}$.
By solving the quadratic inequality $\theta^{2}-r\theta-\frac{(r+1)n}{2}<0$, we have $\theta<\frac{r}{2}+\frac{\sqrt{r^{2}+2(r+1)n}}{2}$. Since $n\geq8(r^{2}+r+4)$, $\theta<\frac{r}{2}+\frac{\sqrt{r^{2}+2(r+1)n}}{2}<\sqrt{(r+1)n}$.
Then $$|B|+1=\frac{n}{2}-\theta>\frac{n}{2}-\sqrt{(r+1)n}\geq\frac{n}{4}\geq5r+7.$$
The result follows.
\end{proof}

Let $E(A)$ and $E(B)$ be the set of edges with both endpoints in $A$ and in $B$, respectively.

\begin{lemma}\label{cla4}
For each edge $u_{1}u_{2}\in E(A)$, $d_{B}(u_{1})+d_{B}(u_{2})\leq|B|+r-1$; for each edge $w_{1}w_{2}\in E(B)$, $d_{A}(w_{1})+d_{A}(w_{2})\leq|A|+r$.
\end{lemma}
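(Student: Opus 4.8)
The governing principle behind both inequalities is the same elementary reformulation of the forbidden book: since $G^{*}$ is $B_{r+1}$-free and a $B_{r+1}$ is exactly $r+1$ triangles sharing a common edge, no edge of $G^{*}$ can have $r+1$ vertices adjacent to both of its endpoints. In other words, for every edge $xy\in E(G^{*})$ we have $|N(x)\cap N(y)|\leq r$. The plan is to combine this common-neighbour bound with a clean inclusion--exclusion count on neighbourhoods restricted to the relevant side ($B$ for the first claim, $A$ for the second), while keeping careful track of where the maximum-entry vertex $u^{*}$ sits relative to $A$ and $B$.

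For the first inequality, fix an edge $u_{1}u_{2}\in E(A)$. Since $u_{1},u_{2}\in A=N(u^{*})$, both are adjacent to $u^{*}$, so $u^{*}\in N(u_{1})\cap N(u_{2})$; this is the one structural observation that makes the bound $|B|+r-1$ rather than $|B|+r$. Because $u^{*}\notin B$, the common neighbours of $u_{1}$ and $u_{2}$ that lie in $B$ number at most $r-1$, i.e. $|N_{B}(u_{1})\cap N_{B}(u_{2})|\leq r-1$. Then inclusion--exclusion gives
\begin{eqnarray*}
d_{B}(u_{1})+d_{B}(u_{2})=|N_{B}(u_{1})\cup N_{B}(u_{2})|+|N_{B}(u_{1})\cap N_{B}(u_{2})|\leq |B|+(r-1),
\end{eqnarray*}
where I have used $|N_{B}(u_{1})\cup N_{B}(u_{2})|\leq |B|$.

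For the second inequality, fix an edge $w_{1}w_{2}\in E(B)$ and run the identical count, this time restricting neighbourhoods to $A$. The common-neighbour bound applied to the edge $w_{1}w_{2}$ yields $|N_{A}(w_{1})\cap N_{A}(w_{2})|\leq |N(w_{1})\cap N(w_{2})|\leq r$, and here I do not subtract one: since $w_{1},w_{2}\in B$ there is no forced common neighbour such as $u^{*}$, so the full budget $r$ must be allowed. Inclusion--exclusion with $|N_{A}(w_{1})\cup N_{A}(w_{2})|\leq |A|$ then gives $d_{A}(w_{1})+d_{A}(w_{2})\leq |A|+r$, completing the proof. The only genuine subtlety — and what I expect to be the sole point worth spelling out carefully rather than any computational obstacle — is this bookkeeping of $u^{*}$: it is a mandatory common neighbour in the $E(A)$ case but not in the $E(B)$ case, which is precisely the asymmetry reflected in the two bounds $|B|+r-1$ versus $|A|+r$.
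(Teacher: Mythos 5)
Your proof is correct and is essentially the same argument as the paper's: both rest on the observation that $B_{r+1}$-freeness forces $|N(x)\cap N(y)|\leq r$ for every edge $xy$, combined with inclusion--exclusion on neighbourhoods restricted to $B$ (resp.\ $A$) and the fact that $u^{*}\in N(u_{1})\cap N(u_{2})\setminus B$ accounts for the $-1$ in the first bound. The only difference is presentational — the paper argues by contradiction while you argue directly — so there is nothing substantive to add.
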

\begin{proof}
If there exists some edge $u_{1}u_{2}\in E(A)$ such that $d_{B}(u_{1})+d_{B}(u_{2})\geq|B|+r$, then
\begin{eqnarray*}
|B|+r&\leq&d_{B}(u_{1})+d_{B}(u_{2})=|N_{B}(u_{1})\cup N_{B}(u_{2})|+|N_{B}(u_{1})\cap N_{B}(u_{2})|\\
&\leq&|B|+|N_{B}(u_{1})\cap N_{B}(u_{2})|.
\end{eqnarray*}
Hence $|N_{B}(u_{1})\cap N_{B}(u_{2})|\geq r$. Note that $u^{*}\in N_{G^{*}}(u_{1})\cap N_{G^{*}}(u_{2})$. It follows that $|N_{G^{*}}(u_{1})\cap N_{G^{*}}(u_{2})|\geq r+1$, which implies that $G^{*}$ contains a book $B_{r+1}$, a contradiction. Similarly, we can prove that $d_{A}(w_{1})+d_{A}(w_{2})\leq|A|+r$ for each edge $w_{1}w_{2}\in E(B)$.
\end{proof}

\begin{lemma}\label{cla5}
For each edge $u_{1}u_{2}\in E(A)$, we have
\begin{eqnarray*}
x_{u_{1}}+x_{u_{2}}\leq\frac{|B|+r+1}{\rho-r}x_{u^{*}}.
\end{eqnarray*}
\end{lemma}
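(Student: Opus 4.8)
The plan is to expand the eigenequation $\rho x_{u_{1}}=\sum_{v\in N(u_{1})}x_{v}$ at both ends of the edge and add the two identities. Since $u_{1},u_{2}\in A=N(u^{*})$, the vertex $u^{*}$ is a common neighbour of $u_{1}$ and $u_{2}$, so I would split $N(u_{i})=\{u^{*}\}\cup N_{A}(u_{i})\cup N_{B}(u_{i})$ and write
\[
\rho(x_{u_{1}}+x_{u_{2}})=2x_{u^{*}}+\Big(\sum_{v\in N_{A}(u_{1})}x_{v}+\sum_{v\in N_{A}(u_{2})}x_{v}\Big)+\Big(\sum_{v\in N_{B}(u_{1})}x_{v}+\sum_{v\in N_{B}(u_{2})}x_{v}\Big).
\]
For the $B$-part I would invoke Lemma \ref{cla4}, which gives $d_{B}(u_{1})+d_{B}(u_{2})\le|B|+r-1$; as every entry is at most $x_{u^{*}}$, the $B$-sum is at most $(|B|+r-1)x_{u^{*}}$. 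Combined with the isolated term $2x_{u^{*}}$, this already produces the numerator $|B|+r+1$.

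The decisive step is the within-$A$ contribution, and the denominator $\rho-r$ is the signal for how it must be handled: I would aim to show
\[
\sum_{v\in N_{A}(u_{1})}x_{v}+\sum_{v\in N_{A}(u_{2})}x_{v}\le r(x_{u_{1}}+x_{u_{2}}).
\]
Granting this, transposing $r(x_{u_{1}}+x_{u_{2}})$ to the left turns the leading $\rho$ into $\rho-r$ and yields $(\rho-r)(x_{u_{1}}+x_{u_{2}})\le(|B|+r+1)x_{u^{*}}$, which is exactly the claim after dividing by $\rho-r>0$ (positivity follows from $\rho>4(r^{2}+r+2)>r$ via Lemma \ref{lem5}). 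Here I would use that $u_{2}\in N_{A}(u_{1})$ and $u_{1}\in N_{A}(u_{2})$, so the mutual edge already supplies the full $x_{u_{1}}+x_{u_{2}}$, while by Lemma \ref{cla2} each $u_{i}$ has at most $r-1$ \emph{further} neighbours inside $A$; it is exactly these extra neighbours that have to be controlled.

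The main obstacle is precisely these extra within-$A$ neighbours. Bounding each of their weights by the trivial estimate $x_{v}\le x_{u^{*}}$ is too wasteful once $r\ge2$: it yields only the weaker pair (denominator $\rho-1$, numerator $|B|+3r-1$), and a short computation shows that in the range $n\ge8(r^{2}+r+4)$ this does \emph{not} imply the stated bound, since it would require $2\rho\le|B|+3r+1$. I therefore expect to exploit the maximality of $\rho(G^{*})$ through the rotation Lemma \ref{lem2}: if a neighbour $v\in N_{A}(u_{1})$ carried too large a Perron weight relative to $x_{u_{1}}$, reallocating edges at $u_{1}$ towards $v$ would strictly increase $\rho$, so extremality forces a weight comparison that drives the within-$A$ sum down to $r(x_{u_{1}}+x_{u_{2}})$. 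Two points demand care. First, any such reallocation must be checked to preserve both non-bipartiteness and $B_{r+1}$-freeness, or the contradiction with extremality is void. Second, the inequality cannot be argued termwise: in the extremal graph the apex vertex $v_{0}$ already violates $\sum_{v\in N_{A}(v_{0})}x_{v}\le r\,x_{v_{0}}$, so the two endpoints must be balanced against each other, the surplus at the endpoint of large $A$-degree being absorbed by the slack at the endpoint of small $A$-degree but larger weight. Making this compensation quantitative is where I anticipate the real work.
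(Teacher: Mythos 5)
Your setup is exactly the paper's: split $N(u_i)=\{u^{*}\}\cup N_{A}(u_i)\cup N_{B}(u_i)$, bound the $B$-sums via Lemma \ref{cla4} by $(|B|+r-1)x_{u^{*}}$, and reduce everything to the within-$A$ inequality $\sum_{v\in N_{A}(u_{1})}x_{v}+\sum_{v\in N_{A}(u_{2})}x_{v}\leq r(x_{u_{1}}+x_{u_{2}})$, after which dividing by $\rho-r>0$ finishes. Your side computation showing that the trivial bound $x_{v}\leq x_{u^{*}}$ only yields $(\rho-1)(x_{u_{1}}+x_{u_{2}})\leq(|B|+3r-1)x_{u^{*}}$, which is genuinely weaker in the stated range of $n$, is also correct. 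But the decisive step is never proved: you end by announcing a rotation argument via Lemma \ref{lem2} and extremality of $\rho(G^{*})$, and you yourself flag that making it quantitative, and verifying that the reallocated graph stays non-bipartite and $B_{r+1}$-free, is ``where I anticipate the real work.'' That is the entire content of the lemma, so as it stands the proposal has a genuine gap; moreover the rotation route is both delicate (the admissibility checks you worry about are real obstacles) and unnecessary.

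The paper closes the gap with a one-line extremal choice that needs no perturbation of $G^{*}$ at all: it suffices to prove the bound for an edge $u_{1}u_{2}\in E(A)$ maximizing $x_{u_{1}}+x_{u_{2}}$ over all edges of $E(A)$, since every other edge has a smaller pair-sum. For that edge, any $v\in N_{A}(u_{1})$ forms an edge $u_{1}v\in E(A)$, so maximality gives $x_{u_{1}}+x_{v}\leq x_{u_{1}}+x_{u_{2}}$, i.e.\ $x_{v}\leq x_{u_{2}}$; with $d_{A}(u_{1})\leq r$ from Lemma \ref{cla2} this yields $\sum_{v\in N_{A}(u_{1})}x_{v}\leq rx_{u_{2}}$, and symmetrically $\sum_{v\in N_{A}(u_{2})}x_{v}\leq rx_{u_{1}}$. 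Note this is exactly the crosswise compensation you correctly intuited (each endpoint's surplus absorbed by the \emph{other} endpoint's weight, not a termwise bound at each endpoint), but it is delivered by the choice of the maximizing edge rather than by any spectral rotation. Adding the two eigen-equations then gives $(\rho-r)(x_{u_{1}}+x_{u_{2}})\leq\bigl(d_{B}(u_{1})+d_{B}(u_{2})+2\bigr)x_{u^{*}}\leq(|B|+r+1)x_{u^{*}}$, and $\rho>r$ by (\ref{eq1}).
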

\begin{proof}
Let $u_{1}u_{2}$ be an edge in $E(A)$ such that $x_{u_1}+x_{u_2}=\max\{x_{u}+x_{v}: uv\in E(A)\}$. It suffices to prove that $x_{u_{1}}+x_{u_{2}}\leq\frac{|B|+r+1}{\rho-r}x_{u^{*}}$.
By Lemma \ref{cla2}, we have
\begin{eqnarray*}
\rho x_{u_{1}}=x_{u^{*}}+\sum_{u\in N_{A}(u_{1})}x_{u}+\sum_{w\in N_{B}(u_{1})}x_{w}\leq x_{u^{*}}+rx_{u_2}+d_{B}(u_{1})x_{u^{*}}
\end{eqnarray*}
and
\begin{eqnarray*}
\rho x_{u_{2}}=x_{u^{*}}+\sum_{u\in N_{A}(u_{2})}x_{u}+\sum_{w\in N_{B}(u_{2})}x_{w}\leq x_{u^{*}}+rx_{u_1}+d_{B}(u_{2})x_{u^{*}}.
\end{eqnarray*}
Summing the above two inequalities and combining Lemma \ref{cla4}, we can obtain that
\begin{eqnarray*}
(\rho-r)(x_{u_{1}}+x_{u_{2}})=\big(d_{B}(u_{1})+d_{B}(u_{2})+2\big)x_{u^{*}}\leq(|B|+r+1)x_{u^{*}}.
\end{eqnarray*}
By (\ref{eq1}), then $\rho>4(r^{2}+r+1)>r$, and so the result follows.
\end{proof}

We use $\overline{d}_{A}(u)$ and $\overline{d}_{B}(u)$ to denote the number of non-adjacent vertices of $u$ in $A$ and $B$, respectively.
Let $\overline{e}(A, B)$ be the number of non-edges between $A$ and $B$, that is, $\overline{e}(A, B)=|A||B|-e(A, B)$.
In the following, we estimate the number of non-edges between $A$ and $B$.

\begin{lemma}\label{cla6}
$\overline{e}(A, B)\geq\nu(G^{*}[A])\big(|B|-r+1\big)$.
\end{lemma}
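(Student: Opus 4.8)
The plan is to bound the non-edges between $A$ and $B$ from below by looking only at the endpoints of a maximum matching of $G^*[A]$, and to feed each matching edge into the degree-sum bound of Lemma \ref{cla4}. The quantity $|B|-r+1$ on the right-hand side is exactly the per-edge deficit one gets from that lemma, and the factor $\nu(G^*[A])$ suggests summing over a matching of that size, so the structure of the target inequality essentially dictates the argument.

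First I would fix a maximum matching $M$ of $G^*[A]$, so that $|M|=\nu(G^*[A])$. For each matching edge $u_1u_2\in M$, Lemma \ref{cla4} gives $d_B(u_1)+d_B(u_2)\le |B|+r-1$. Counting non-neighbours instead of neighbours, the number of vertices of $B$ that are adjacent to neither $u_1$ nor $u_2$ is at least
\begin{eqnarray*}
2|B|-\big(d_B(u_1)+d_B(u_2)\big)\ge 2|B|-(|B|+r-1)=|B|-r+1,
\end{eqnarray*}
so the pair $\{u_1,u_2\}$ accounts for at least $|B|-r+1$ non-edges between $A$ and $B$.

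Next I would observe that, because $M$ is a matching, its $2\,\nu(G^*[A])$ endpoints are pairwise distinct; hence the non-edges contributed by different matching edges involve disjoint vertices of $A$ and are therefore counted only once. Since $\overline{e}(A,B)=|A||B|-e(A,B)$ counts all non-edges between $A$ and $B$, and in particular at least those incident to the matched vertices, summing the per-edge estimate over the $\nu(G^*[A])$ edges of $M$ yields
\begin{eqnarray*}
\overline{e}(A,B)\ge \sum_{u_1u_2\in M}\big(|B|-r+1\big)=\nu(G^*[A])\big(|B|-r+1\big),
\end{eqnarray*}
which is exactly the claimed inequality.

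I do not expect a genuine obstacle here: the only point requiring a little care is that the non-edges are not over-counted across matching edges, and this is guaranteed by the vertex-disjointness of $M$. The real content is already packaged in Lemma \ref{cla4}, whose book-freeness argument supplies the deficit $|B|-r+1$; the present lemma is just the aggregation of that local bound over a maximum matching.
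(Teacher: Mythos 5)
Your proposal is correct and is essentially identical to the paper's proof: both apply Lemma \ref{cla4} to each edge of a maximum matching in $G^{*}[A]$ to get $\overline{d}_{B}(u_1)+\overline{d}_{B}(u_2)\geq |B|-r+1$ per matched pair, and sum over the $\nu(G^{*}[A])$ vertex-disjoint edges (you merely make the no-overcounting point explicit, which the paper leaves implicit). One phrasing slip worth fixing: $2|B|-\big(d_B(u_1)+d_B(u_2)\big)$ is the number of non-edges from $\{u_1,u_2\}$ to $B$, not the number of vertices of $B$ adjacent to neither $u_1$ nor $u_2$, but since your subsequent counting uses it as a non-edge count, the argument is unaffected.
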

\begin{proof}
By Lemma \ref{cla4}, we have
\begin{eqnarray}\label{eq4}
\overline{d}_{B}(u_1)+\overline{d}_{B}(u_2)=2|B|-\Big(d_{B}(u_{1})+d_{B}(u_{2})\Big)\geq2|B|-\big(|B|+r-1\big)=|B|-r+1
\end{eqnarray}
for each edge $u_1u_2\in E(A)$. Note that there are $\nu(G^{*}[A])$ independent edges in $E(A)$. Then $\overline{e}(A, B)\geq\nu(G^{*}[A])(|B|-r+1)$.
\end{proof}

By Lemmas \ref{cla2}-\ref{cla6}, we are ready to prove an upper bound of $\nu(G^{*}[A])$, which is critical for us to characterize the structure of $G^{*}$.

\begin{lemma}\label{cla7}
$\nu(G^{*}[A])\leq1$ for $n\geq8(r^{2}+r+4)$.
\end{lemma}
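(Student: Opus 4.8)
The plan is to argue by contradiction: suppose $\nu(G^{*}[A])\geq 2$ and show that this forces $\rho^{2}$ below the lower bound $\rho^{2}>\big\lfloor\frac{(n-1)^{2}}{4}\big\rfloor\geq\frac{n^{2}-2n}{4}$ recorded in (\ref{eq2}). The engine is a two-step expansion of the Perron equation at $u^{*}$. Writing $\rho^{2}x_{u^{*}}=\sum_{v\in A}\rho x_{v}$ and splitting each neighbourhood $N(v)$ into $\{u^{*}\}$, $N_{A}(v)$ and $N_{B}(v)$, I would obtain
\begin{equation*}
\rho^{2}x_{u^{*}}=|A|\,x_{u^{*}}+\sum_{u_{1}u_{2}\in E(A)}(x_{u_{1}}+x_{u_{2}})+\sum_{w\in B}d_{A}(w)\,x_{w}.
\end{equation*}
Each of the three terms is then controlled by the lemmas already established. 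For the middle term I would apply Lemma \ref{cla5} together with $e(A)\leq\nu(G^{*}[A])(r+1)$ from Lemma \ref{cla2}, giving $\sum_{u_{1}u_{2}\in E(A)}(x_{u_{1}}+x_{u_{2}})\leq\nu(G^{*}[A])(r+1)\frac{|B|+r+1}{\rho-r}x_{u^{*}}$. For the last term I would use $x_{w}\leq x_{u^{*}}$ and rewrite $e(A,B)=|A||B|-\overline{e}(A,B)$, bounding $\overline{e}(A,B)$ below by Lemma \ref{cla6}.

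Dividing through by $x_{u^{*}}$, writing $\nu=\nu(G^{*}[A])$, and substituting $|A|=\frac{n}{2}+\theta$, $|B|+1=\frac{n}{2}-\theta$ exactly as in Lemma \ref{cla3}, these three bounds collapse to
\begin{equation*}
\rho^{2}\leq\Big(\tfrac{n^{2}}{4}-\theta^{2}\Big)-\nu\Big(\tfrac{n}{2}-\theta-r\Big)+\nu(r+1)\,\frac{\frac{n}{2}-\theta+r}{\rho-r}.
\end{equation*}
The key observation is that the coefficient of $\nu$, namely $\big(\frac{n}{2}-\theta-r\big)-(r+1)\frac{n/2-\theta+r}{\rho-r}$, is positive. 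Indeed, $\rho>\frac{n-1}{2}-\frac{1}{4(n-1)}$ from Lemma \ref{lem5} forces $\frac{n/2-\theta+r}{\rho-r}<2$, so this coefficient exceeds $\frac{n}{2}-\theta-3r-2$, which is positive because $\theta<\sqrt{(r+1)n}\ll\frac{n}{2}$ by Lemma \ref{cla3}. Consequently the right-hand side is decreasing in $\nu$, so under the assumption $\nu\geq2$ the largest value occurs at $\nu=2$, and it suffices to derive a contradiction there. Putting $\nu=2$ and comparing with $\frac{n^{2}-2n}{4}$ reduces the whole matter to the elementary inequality $2\theta-\theta^{2}+2r+2(r+1)\frac{n/2-\theta+r}{\rho-r}\leq\frac{n}{2}$; since $2\theta-\theta^{2}\leq1$ and the final summand is at most $4(r+1)$, the left side is at most $6r+5$, which is below $\frac{n}{2}$ precisely because $n\geq8(r^{2}+r+4)\geq12r+10$. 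This contradicts (\ref{eq2}), so $\nu(G^{*}[A])\leq1$.

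The delicate point — and where I expect the main difficulty to lie — is that the leading term $|A|(|B|+1)=\frac{n^{2}}{4}-\theta^{2}$ by itself already exceeds the lower bound $\big\lfloor\frac{(n-1)^{2}}{4}\big\rfloor$ by roughly $\frac{n}{2}$, so the entire contradiction rests on the single matching edge's worth of missing $A$--$B$ edges quantified by the term $-\nu(|B|-r+1)$. This quantity is only of order $\frac{n}{2}$ (via $|B|+1\geq\frac{n}{4}$ from Lemma \ref{cla3}), just barely enough to close the gap; making the argument go through therefore requires the positive correction $\nu(r+1)\frac{|B|+r+1}{\rho-r}$ to remain of order $r$ rather than of order $n$. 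This is exactly what the near-equality $\rho\approx\frac{n-1}{2}$ supplied by Lemma \ref{lem5} guarantees, since it matches the numerator $|B|+r+1\approx\frac{n}{2}$ against the denominator $\rho-r\approx\frac{n}{2}$ in Lemma \ref{cla5}. Tracking these two near-cancelling quantities of size $\frac{n}{2}$ with enough precision to extract the contradiction is the crux of the estimate.
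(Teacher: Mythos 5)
Your proposal is correct and follows essentially the same route as the paper: the identical eigen-equation expansion of $\rho^{2}x_{u^{*}}$ at $u^{*}$, bounded term-by-term via Lemmas \ref{cla2}, \ref{cla5} and \ref{cla6}, with the assumption $\nu(G^{*}[A])\geq2$ driving $\rho^{2}$ below the bound in (\ref{eq2}). The only difference is the closing arithmetic: you substitute $|A|=\frac{n}{2}+\theta$ and compare against $\frac{n^{2}-2n}{4}$, while the paper bounds the coefficient of $\nu(G^{*}[A])$ by $-\frac{1}{2}\big(|B|+1\big)$ and uses $\nu\geq2$ to land directly on $(|A|-1)(|B|+1)\leq\big\lfloor\frac{(n-1)^{2}}{4}\big\rfloor$ --- an equivalent computation.
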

\begin{proof}
Suppose to the contrary that $\nu(G^{*}[A])\geq2$. By Lemmas \ref{cla2}, \ref{cla5} and \ref{cla6}, we have
\begin{eqnarray}\label{eq5}
\rho^{2}x_{u^{*}}&=&|A|x_{u^{*}}+\sum_{u\in A}d_{A}(u)x_{u}+\sum_{w\in B}d_{A}(w)x_{w}\nonumber\\
&\leq&|A|x_{u^{*}}+\sum_{u_1u_2\in E(A)}(x_{u_1}+x_{u_2})+e(A, B)x_{u^{*}}\nonumber\\
&\leq&|A|x_{u^{*}}+e(A)\frac{|B|+r+1}{\rho-r}x_{u^{*}}+\Big(|A||B|-\bar{e}(A, B)\Big)x_{u^{*}}\nonumber\\
&\leq&|A|(|B|+1)x_{u^{*}}+\nu(G^{*}[A])\Bigg(\frac{(r+1)\big(|B|+r+1\big)}{\rho-r}-\big(|B|-r+1\big)\Bigg)x_{u^{*}}.
\end{eqnarray}
It follows from (\ref{eq1}) that $\rho>4(r^{2}+r+2)>5r+4$, which implies that $\frac{r+1}{\rho-r}<\frac{1}{4}$.
By Lemma \ref{cla3}, we know that $|B|+1\geq5r+7$, and hence $\frac{5}{4}r<\frac{1}{4}(|B|+1)$.
Combining (\ref{eq5}), we have
\begin{eqnarray*}
\rho^{2}&<&|A|(|B|+1)+\nu(G^{*}[A])\Bigg(\frac{|B|+r+1}{4}-\big(|B|-r+1\big)\Bigg)\\
&=&|A|(|B|+1)+\nu(G^{*}[A])\Bigg(-\frac{1}{2}\big(|B|+1\big)-\frac{1}{4}\big(|B|+1\big)+\frac{5}{4}r\Bigg)\\
&\leq&|A|(|B|+1)-\frac{\nu(G^{*}[A])}{2}\big(|B|+1\big)\\
&\leq&(|A|-1)(|B|+1)\\
&\leq&\bigg\lfloor\frac{(n-1)^{2}}{4}\bigg\rfloor,
\end{eqnarray*}
which contradicts (\ref{eq2}). The last inequality holds because $(|A|-1)+(|B|+1)=n-1$.
\end{proof}

Based on Lemma \ref{cla7}, we can further prove an upper bound of $\nu(G^{*}[B])$.

\begin{lemma}\label{cla8}
$\nu(G^{*}[B])\leq1$.
\end{lemma}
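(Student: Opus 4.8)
The plan is to argue by contradiction in close parallel with Lemma~\ref{cla7}, this time extracting a large negative contribution from a matching of size at least $2$ inside $B$ rather than inside $A$. So I would assume $\nu(G^{*}[B])\geq 2$ and aim for a contradiction with (\ref{eq2}). The first step is to establish the $B$-side analogue of Lemma~\ref{cla6}: for each edge $w_{1}w_{2}\in E(B)$ the second inequality of Lemma~\ref{cla4} gives $d_{A}(w_{1})+d_{A}(w_{2})\leq|A|+r$, whence
\[
\overline{d}_{A}(w_{1})+\overline{d}_{A}(w_{2})=2|A|-\big(d_{A}(w_{1})+d_{A}(w_{2})\big)\geq|A|-r.
\]
Summing this over a maximum matching of $G^{*}[B]$, whose $\nu(G^{*}[B])$ edges have pairwise disjoint endpoints (so the counted non-edges into $A$ are all distinct), yields $\overline{e}(A,B)\geq\nu(G^{*}[B])(|A|-r)$, i.e. $e(A,B)\leq|A||B|-\nu(G^{*}[B])(|A|-r)$.

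The second step is to expand $\rho^{2}x_{u^{*}}$ as in the proof of Lemma~\ref{cla7} (cf. (\ref{eq5})), namely $\rho^{2}x_{u^{*}}=|A|x_{u^{*}}+\sum_{u\in A}d_{A}(u)x_{u}+\sum_{w\in B}d_{A}(w)x_{w}$. The middle term equals $\sum_{u_{1}u_{2}\in E(A)}(x_{u_{1}}+x_{u_{2}})$ and is controlled by Lemma~\ref{cla5} together with $e(A)\leq r+1$, which now follows from $\nu(G^{*}[A])\leq 1$ (Lemma~\ref{cla7}) and Lemma~\ref{cla2}; this contributes at most $(r+1)\frac{|B|+r+1}{\rho-r}x_{u^{*}}$, a quantity of size $O(r)$ by (\ref{eq1}). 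For the cross term I would use the crude bound $x_{w}\leq x_{u^{*}}$ together with the non-edge count from the first step, giving $\sum_{w\in B}d_{A}(w)x_{w}\leq e(A,B)x_{u^{*}}\leq\big(|A||B|-\nu(G^{*}[B])(|A|-r)\big)x_{u^{*}}$. Collecting the three contributions produces
\[
\rho^{2}\leq|A|(|B|+1)-\nu(G^{*}[B])(|A|-r)+(r+1)\frac{|B|+r+1}{\rho-r}.
\]

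The final step is to show the right-hand side is at most $\big\lfloor\frac{(n-1)^{2}}{4}\big\rfloor$, contradicting (\ref{eq2}). Using $\nu(G^{*}[B])\geq 2$ and $|A|\geq\lceil\frac{n}{2}\rceil$ (Lemma~\ref{cla1}), the negative term is at least $2(\lceil\frac{n}{2}\rceil-r)$, of order $n$, while $|A|(|B|+1)=|A|(n-|A|)\leq\lfloor\frac{n^{2}}{4}\rfloor$ and the last term is only $O(r)$. Since $\lfloor\frac{n^{2}}{4}\rfloor-\lfloor\frac{(n-1)^{2}}{4}\rfloor\leq\frac{n}{2}$, a reduction of about $n$ beats the $\frac{n}{2}$ gap plus the $O(r)$ slack once $n\geq 8(r^{2}+r+4)$. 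I expect the main obstacle to be the bookkeeping in this numerical step: one should check that $|A|(n-|A|)-2(|A|-r)$ is maximised at the extreme $|A|=\lceil\frac{n}{2}\rceil$ (its derivative in $|A|$ is negative for $|A|\geq\frac{n}{2}$), verify the inequality at that worst case while handling the parity of $n$ in the floor terms, and confirm that $(r+1)\frac{|B|+r+1}{\rho-r}$ is genuinely dominated using $\rho>\frac{n-1}{2}-\frac{1}{4(n-1)}$ and $|B|+1\leq\frac{n}{2}$.
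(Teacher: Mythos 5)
Your proposal is correct and follows essentially the same strategy as the paper's proof: assume $\nu(G^{*}[B])\geq2$, turn each matching edge of $G^{*}[B]$ into at least $|A|-r$ non-edges between $A$ and $B$ via Lemma \ref{cla4} (the paper's (\ref{eq6}) and (\ref{eq6.6})), feed this into the expansion of $\rho^{2}x_{u^{*}}$, and contradict (\ref{eq2}). The one genuine divergence is local: to control $\sum_{u\in A}d_{A}(u)x_{u}$, you invoke Lemma \ref{cla5} together with the uniform bound $e(A)\leq\nu(G^{*}[A])(r+1)\leq r+1$ (Lemmas \ref{cla2} and \ref{cla7}, with no circularity since Lemma \ref{cla7} is proved independently), whereas the paper instead splits into the two shapes $\nu(G^{*}[A])\leq1$ permits --- $G^{*}[A]$ a star $K_{1,a}$ with $a\leq r$, or a triangle --- and uses the cruder estimate $\sum_{u\in A}d_{A}(u)x_{u}\leq2e(A)x_{u^{*}}$, which is $\leq 2r x_{u^{*}}$ or $6x_{u^{*}}$ respectively. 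Both routes make the $A$-internal contribution $O(r)$, so your uniform treatment buys a cleaner, case-free argument; the price is a slightly more delicate numerical comparison, since you bound $|A|(|B|+1)\leq\lfloor n^{2}/4\rfloor$ and must absorb the gap $\lfloor n^{2}/4\rfloor-\lfloor(n-1)^{2}/4\rfloor\leq n/2$ using the term $-2(|A|-r)$, while the paper reaches $\rho^{2}\leq|A||B|-|A|+4r$ (resp. $-|A|+2r+6$) and compares directly with $|A||B|\leq\lfloor(n-1)^{2}/4\rfloor$. Your monotonicity check (that $|A|(n-|A|)-2(|A|-r)$ is maximized at $|A|=\lceil n/2\rceil$) and the estimate $(r+1)\frac{|B|+r+1}{\rho-r}<\frac{1}{4}\big(\frac{n}{2}+r\big)$, valid because $\rho>5r+4$ by (\ref{eq1}) and $|B|+1\leq\lfloor n/2\rfloor$ by Lemma \ref{cla1}, do close the argument for $n\geq8(r^{2}+r+4)$, so the bookkeeping you flagged as the main obstacle goes through.
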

\begin{proof}
We prove the result by contradiction. Assume that $\nu(G^{*}[B])\geq2$.
By Lemma \ref{cla4}, we have
\begin{eqnarray}\label{eq6}
\overline{d}_{A}(w_1)+\overline{d}_{A}(w_2)\geq2|A|-(|A|+r)=|A|-r
\end{eqnarray}
for each edge $w_1w_2\in E(B)$. Note that there exist $\nu(G^{*}[B])$ independent edges in $G^{*}[B]$. Then we obtain that
\begin{eqnarray}\label{eq6.6}
\overline{e}(A, B)\geq\nu(G^{*}[B])(|A|-r).
\end{eqnarray}
Moreover, by Lemma \ref{cla7}, we have $\nu(G^{*}[A])=0$ or $\nu(G^{*}[A])=1$.

\vspace{1.8mm}
\noindent{\bf Case 1.} $G^{*}[A]$ is an empty graph or a star $K_{1,a}$ with possibly some isolated vertices.
\vspace{1.8mm}

At this moment, $e(A)=0$ or $e(A)=a$. By Lemma 3.2, we have $a\leq r$.
Now, by the eigen-equation again and (\ref{eq6.6}), we can obtain that
\begin{eqnarray*}
\rho^{2}x_{u^{*}}&=&|A|x_{u^{*}}+\sum_{u\in A}d_{A}(u)x_{u}+\sum_{w\in B}d_{A}(w)x_{w}\\
&\leq&|A|x_{u^{*}}+2e(A)x_{u^{*}}+e(A, B)x_{u^{*}}\\
&\leq&|A|x_{u^{*}}+2rx_{u^{*}}+\Big(|A||B|-\nu(G^{*}[B])\big(|A|-r\big)\Big)x_{u^{*}}\\
&\leq&|A|x_{u^{*}}+2rx_{u^{*}}+\Big(|A||B|-2\big(|A|-r\big)\Big)x_{u^{*}}\\
&=&\big(|A||B|-|A|+4r\big)x_{u^{*}}.
\end{eqnarray*}
Note that $|A|+|B|=n-1$. Then $|A||B|\leq\big\lfloor\frac{(n-1)^{2}}{4}\big\rfloor$. Moreover, by Lemma \ref{cla1}, we have $|A|\geq\big\lceil\frac{n}{2}\big\rceil\geq4(r^{2}+r+4)$, which implies that $|A|-4r>0$. Therefore, we obtain that $\rho^{2}\leq\big\lfloor\frac{(n-1)^{2}}{4}\big\rfloor-(|A|-4r)<\big\lfloor\frac{(n-1)^{2}}{4}\big\rfloor$, contradicting (\ref{eq2}).

\vspace{1.8mm}
\noindent{\bf Case 2.} $G^{*}[A]$ is a triangle with possibly some isolated vertices.
\vspace{1.8mm}

Then $e(A)=3$, and hence we have
\begin{eqnarray*}
\rho^{2}x_{u^{*}}&\leq&|A|x_{u^{*}}+2e(A)x_{u^{*}}+e(A, B)x_{u^{*}}\\
&\leq&|A|x_{u^{*}}+6x_{u^{*}}+\Big(|A||B|-2\big(|A|-r\big)\Big)x_{u^{*}}\\
&\leq&\Big(\Big\lfloor\frac{(n-1)^{2}}{4}\Big\rfloor-(|A|-2r-6)\Big)x_{u^{*}}.
\end{eqnarray*}
Note that $|A|\geq4(r^{2}+r+4)$. Then $|A|-2r-6>0$, and hence $\rho^{2}<\big\lfloor\frac{(n-1)^{2}}{4}\big\rfloor$, which is a contradiction.
\end{proof}

By Lemmas \ref{cla7} and \ref{cla8}, we are in a position to determine the relation between $\nu(G^{*}[A])$ and $\nu(G^{*}[B])$.

\begin{lemma}\label{cla9}
$\nu(G^{*}[A])+\nu(G^{*}[B])=1$.
\end{lemma}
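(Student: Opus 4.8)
The plan is to invoke Lemmas \ref{cla7} and \ref{cla8}, which give $\nu(G^{*}[A])\leq1$ and $\nu(G^{*}[B])\leq1$, so that $\nu(G^{*}[A])+\nu(G^{*}[B])\in\{0,1,2\}$; it then suffices to rule out the two extreme values. To exclude the value $0$, I would argue by the non-bipartiteness of $G^{*}$. If $\nu(G^{*}[A])=\nu(G^{*}[B])=0$, then $A$ and $B$ are both independent sets, so every edge of $G^{*}$ joins $\{u^{*}\}$ to $A$ or joins $A$ to $B$ (recall that $u^{*}$ has no neighbour in $B$). Taking the bipartition with parts $A$ and $\{u^{*}\}\cup B$ exhibits $G^{*}$ as a bipartite graph, contradicting that $G^{*}$ is non-bipartite. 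Hence $\nu(G^{*}[A])+\nu(G^{*}[B])\geq1$.

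The substance lies in excluding the value $2$, i.e.\ the case $\nu(G^{*}[A])=\nu(G^{*}[B])=1$. Here I would fix a matching edge $u_{1}u_{2}\in E(A)$ and a matching edge $w_{1}w_{2}\in E(B)$ and combine the two one-sided non-edge counts already in hand. Inequality (\ref{eq4}) says that the non-edges between $A$ and $B$ incident with $\{u_{1},u_{2}\}$ number at least $|B|-r+1$, while inequality (\ref{eq6}) says that those incident with $\{w_{1},w_{2}\}$ number at least $|A|-r$. Since $A$ and $B$ are disjoint, these two families of non-edges can overlap only in the at most four pairs $u_{i}w_{j}$ with $i,j\in\{1,2\}$, so by inclusion--exclusion
$$\overline{e}(A,B)\geq(|B|-r+1)+(|A|-r)-4=n-2r-4,$$
using $|A|+|B|=n-1$.

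Finally I would feed this into the eigen-equation for $u^{*}$. Bounding $\sum_{u\in A}d_{A}(u)x_{u}\leq2e(A)x_{u^{*}}$ with $e(A)\leq r+1$ (Lemma \ref{cla2}) and $\sum_{w\in B}d_{A}(w)x_{w}\leq e(A,B)x_{u^{*}}=(|A||B|-\overline{e}(A,B))x_{u^{*}}$, I obtain
$$\rho^{2}\leq|A|(|B|+1)+2(r+1)-(n-2r-4)=|A|(|B|+1)+4r+6-n.$$
Since $|A|+(|B|+1)=n$, we have $|A|(|B|+1)\leq\big\lfloor\frac{n^{2}}{4}\big\rfloor=\big\lfloor\frac{(n-1)^{2}}{4}\big\rfloor+\big\lfloor\frac{n}{2}\big\rfloor$, and the hypothesis $n\geq8(r^{2}+r+4)$ makes $\big\lfloor\frac{n}{2}\big\rfloor+4r+6-n<0$, whence $\rho^{2}<\big\lfloor\frac{(n-1)^{2}}{4}\big\rfloor$, contradicting (\ref{eq2}). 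I expect the only delicate point to be the inclusion--exclusion step: one must confirm that the non-edge families counted from the $A$-side edge and from the $B$-side edge overlap in only a bounded number of pairs, since the whole argument hinges on the combined bound $\overline{e}(A,B)\geq n-2r-4$ being essentially the sum of the two separate one-sided bounds.
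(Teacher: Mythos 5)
Your proof is correct and takes essentially the same route as the paper: it excludes the value $0$ via non-bipartiteness and the value $2$ by combining (\ref{eq4}) and (\ref{eq6}) with the same ``at most four repeated non-edges'' inclusion--exclusion, then contradicts (\ref{eq2}) through the eigen-equation at $u^{*}$. The only difference is cosmetic: where the paper splits into the star and triangle cases for $G^{*}[A]$ (using $e(A)\leq r$ and $e(A)=3$ respectively), you invoke the uniform bound $e(A)\leq\nu(G^{*}[A])(r+1)=r+1$ from Lemma \ref{cla2}, which slightly streamlines the arithmetic without changing the argument.
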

\begin{proof}
By Lemma \ref{cla7}, we know that $\nu(G^{*}[A])=0$ or $\nu(G^{*}[A])=1$. By Lemma \ref{cla8}, we have $\nu(G^{*}[B])=0$ or $\nu(G^{*}[B])=1$.
If $\nu(G^{*}[A])=\nu(G^{*}[B])=0$, then $e(A)=e(B)=0$, and hence $G^{*}$ is a bipartite graph, a contradiction. If $\nu(G^{*}[A])=\nu(G^{*}[B])=1$, then there exist at least an edge $u_1u_2\in E(A)$ and at least an edge $w_1w_2\in E(B)$. Next we divide the proof into the following two cases according to the structure of $G^{*}[A]$.

\vspace{1.8mm}
\noindent{\bf Case 1.} $G^{*}[A]$ is a star $K_{1,a}$ with possibly some isolated vertices.
\vspace{1.8mm}

By Lemma \ref{cla2}, $e(A)=a\leq r$. Combining (\ref{eq4}) and (\ref{eq6}), we have $$\overline{d}_{B}(u_1)+\overline{d}_{B}(u_2)\geq|B|-r+1$$ and
$$\overline{d}_{A}(w_1)+\overline{d}_{A}(w_2)\geq|A|-r.$$
Note that there are at most four non-edges between $u_1u_2$ and $w_1w_2$. Then these non-edges are counted repeatedly at most four times in $\overline{d}_{B}(u_1)+\overline{d}_{B}(u_2)+\overline{d}_{A}(w_1)+\overline{d}_{A}(w_1)$. Hence we obtain that
\begin{eqnarray*}
\rho^{2}x_{u^{*}}&=&|A|x_{u^{*}}+\sum_{u\in A}d_{A}(u)x_{u}+\sum_{w\in B}d_{A}(w)x_{w}\\
&\leq&|A|x_{u^{*}}+2e(A)x_{u^{*}}+e(A, B)x_{u^{*}}\\
&\leq&|A|x_{u^{*}}+2rx_{u^{*}}+\Big(|A||B|-\big(|B|-r+1\big)-\big(|A|-r\big)+4\Big)x_{u^{*}}\\
&=&\bigg(|A||B|-\Big(|B|+1-4(r+1)\Big)\bigg)x_{u^{*}}.
\end{eqnarray*}
It follows from Lemma \ref{cla3} that $|B|+1\geq5r+7$, which implies that $|B|+1>4(r+1)$. Thus, we have $\rho^{2}\leq|A||B|\leq\big\lfloor\frac{(n-1)^{2}}{4}\big\rfloor$,
which contradicts (\ref{eq2}).

\vspace{1.8mm}
\noindent{\bf Case 2.} $G^{*}[A]$ is a triangle with possibly some isolated vertices.
\vspace{1.8mm}

In this case, we have
\begin{eqnarray*}
\rho^{2}x_{u^{*}}&\leq&|A|x_{u^{*}}+6x_{u^{*}}+\Big(|A||B|-\big(|B|-r+1\big)-\big(|A|-r\big)+4\Big)x_{u^{*}}\\
&=&\bigg(|A||B|-\Big(|B|+1-2(r+5)\Big)\bigg)x_{u^{*}}.
\end{eqnarray*}
By Lemma \ref{cla3}, we have $|B|+1\geq2(r+5)$, which implies that $\rho^{2}\leq|A||B|\leq\big\lfloor\frac{(n-1)^{2}}{4}\big\rfloor$, a contradiction.
\end{proof}

For any vertex $w\in B$, we define the residual index of $w$ $$\Gamma_{w}=d_{A}(w)(x_{u^{*}}-x_{w}),$$
which will be frequently used in the following proof.
Note that $x_{u^{*}}\geq x_{w}$. Then $\Gamma_{w}\geq0$. By using the eigen-equation again, we have
\begin{eqnarray}\label{eq7}
\rho^{2}x_{u^{*}}&=&|A|x_{u^{*}}+\sum_{u\in A}d_{A}(u)x_{u}+\sum_{w\in B}d_{A}(w)x_{w}\nonumber\\
&=&|A|x_{u^{*}}+\sum_{u\in A}d_{A}(u)x_{u}+\sum_{w\in B}d_{A}(w)x_{u^{*}}-\sum_{w\in B}\Gamma_{w}\nonumber\\
&\leq&|A|x_{u^{*}}+2e(A)x_{u^{*}}+\Big(|A||B|-\overline{e}(A, B)\Big)x_{u^{*}}-\sum_{w\in B}\Gamma_{w}\nonumber\\
&=&|A|x_{u^{*}}+2e(A)x_{u^{*}}+|A||B|x_{u^{*}}-\bigg(\sum_{w\in B}\overline{d}_{A}(w)x_{u^{*}}+\sum_{w\in B}\Gamma_{w}\bigg).
\end{eqnarray}

Now we are ready to show that $e(A)\neq0$. Firstly, we give the following lemma which plays an important role in the proof of Theorem \ref{cla11}.

\begin{lemma}\label{cla10}
If $e(A)=0$, then $\sum_{w\in B}\overline{d}_{A}(w)x_{u^{*}}+\sum_{w\in B}\Gamma_{w}<|A|x_{u^{*}}$.
\end{lemma}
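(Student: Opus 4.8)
The plan is to read the inequality straight off the master estimate (\ref{eq7}), combined with the spectral lower bound (\ref{eq2}), using nothing about the internal structure of $G^{*}[B]$. First I would specialize (\ref{eq7}) to the hypothesis $e(A)=0$, so that the term $2e(A)x_{u^{*}}$ disappears and one is left with
\begin{eqnarray*}
\rho^{2}x_{u^{*}}\leq |A|x_{u^{*}}+|A||B|x_{u^{*}}-\bigg(\sum_{w\in B}\overline{d}_{A}(w)x_{u^{*}}+\sum_{w\in B}\Gamma_{w}\bigg).
\end{eqnarray*}

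Next I would simply solve this for the bracketed quantity, moving it to the left and $\rho^{2}x_{u^{*}}$ to the right, which gives
\begin{eqnarray*}
\sum_{w\in B}\overline{d}_{A}(w)x_{u^{*}}+\sum_{w\in B}\Gamma_{w}\leq\big(|A|+|A||B|-\rho^{2}\big)x_{u^{*}}.
\end{eqnarray*}
Since the Perron entry $x_{u^{*}}$ is strictly positive, the strict bound $\rho^{2}>\lfloor\frac{(n-1)^{2}}{4}\rfloor$ from (\ref{eq2}) may be inserted, turning the estimate into the strict inequality
\begin{eqnarray*}
\sum_{w\in B}\overline{d}_{A}(w)x_{u^{*}}+\sum_{w\in B}\Gamma_{w}<\bigg(|A|+|A||B|-\bigg\lfloor\frac{(n-1)^{2}}{4}\bigg\rfloor\bigg)x_{u^{*}}.
\end{eqnarray*}

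The final step is purely arithmetic. Because $u^{*}$ is adjacent to every vertex of $A$ and to no vertex of $B$, we have $|A|+|B|=n-1$; thus $|A|$ and $|B|$ are positive integers with fixed sum $n-1$, so their product is maximized at the balanced split and $|A||B|\leq\lfloor\frac{(n-1)^{2}}{4}\rfloor$. Hence the surplus term $|A||B|-\lfloor\frac{(n-1)^{2}}{4}\rfloor$ is nonpositive, and the bound collapses to $\sum_{w\in B}\overline{d}_{A}(w)x_{u^{*}}+\sum_{w\in B}\Gamma_{w}<|A|x_{u^{*}}$, which is exactly the assertion.

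I do not expect a genuine obstacle here: all the real work has already been front-loaded into (\ref{eq7}) through the definition of the residual index $\Gamma_{w}$ and the bound $\sum_{u\in A}d_{A}(u)x_{u}\leq 2e(A)x_{u^{*}}$, and into the spectral lower bound (\ref{eq2}). The only points needing care are preserving strictness — guaranteed since (\ref{eq2}) is strict and $x_{u^{*}}>0$ — and recording the elementary fact $|A||B|\leq\lfloor\frac{(n-1)^{2}}{4}\rfloor$ that kills the surplus term. It is worth emphasizing that the argument never invokes $\nu(G^{*}[B])=1$ nor the star/triangle description of $G^{*}[B]$; those structural facts are presumably kept in reserve for the next step, where this estimate is leveraged into the contradiction that forces $e(A)\neq 0$.
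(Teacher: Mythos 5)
Your proposal is correct and is essentially the paper's argument: the paper proves the same statement by contradiction, assuming $\sum_{w\in B}\overline{d}_{A}(w)x_{u^{*}}+\sum_{w\in B}\Gamma_{w}\geq|A|x_{u^{*}}$ and deducing $\rho^{2}\leq|A||B|\leq\big\lfloor\frac{(n-1)^{2}}{4}\big\rfloor$ from (\ref{eq7}) with $e(A)=0$, contradicting (\ref{eq2}). Your direct rearrangement is just the contrapositive of that one-line argument, using exactly the same three ingredients — (\ref{eq7}), (\ref{eq2}), and $|A||B|\leq\big\lfloor\frac{(n-1)^{2}}{4}\big\rfloor$ from $|A|+|B|=n-1$ — so it is the same proof in different packaging.
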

\begin{proof}
Suppose to the contrary that $\sum_{w\in B}\overline{d}_{A}(w)x_{u^{*}}+\sum_{w\in B}\Gamma_{w}\geq|A|x_{u^{*}}$. Combining (\ref{eq7}) and $e(A)=0$, we have
$$\rho^{2}\leq|A||B|\leq\bigg\lfloor\frac{(n-1)^{2}}{4}\bigg\rfloor,$$
which contradicts (\ref{eq2}). The proof is completed.
\end{proof}

By using Lemma \ref{cla10} repeatedly, we next can prove that $e(A)\neq0$, which is very important to determine the structure of $G^{*}.$

\begin{theorem}\label{cla11}
$e(A)\neq0$.
\end{theorem}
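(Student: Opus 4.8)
The plan is to argue by contradiction: suppose $e(A) = 0$. By Lemma~\ref{cla9}, since $\nu(G^{*}[A]) = 0$ forces $e(A) = 0$, we must then have $\nu(G^{*}[B]) = 1$, so $G^{*}[B]$ contains exactly one nontrivial component. The strategy is to use the assumption $e(A) = 0$ together with the residual-index bound in Lemma~\ref{cla10} to show that $G^{*}$ must behave almost exactly like $K_{\lfloor\frac{n-1}{2}\rfloor,\lceil\frac{n-1}{2}\rceil}$ plus one edge inside $B$, and then derive that such a configuration cannot attain spectral radius exceeding $\rho\big(K_{\lfloor\frac{n-1}{2}\rfloor,\lceil\frac{n-1}{2}\rceil}^{r,r}\big)$, contradicting~(\ref{eq2}).

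First I would invoke Lemma~\ref{cla10}, which tells us that under $e(A)=0$ the quantity $\sum_{w\in B}\overline{d}_{A}(w)x_{u^{*}}+\sum_{w\in B}\Gamma_{w}$ is strictly less than $|A|x_{u^{*}}$. The key idea is that both summands are nonnegative, so this single inequality forces \emph{both} $\sum_{w\in B}\overline{d}_{A}(w)$ and $\sum_{w\in B}\Gamma_{w}$ to be small relative to $|A|$. In particular, since $|A|\geq\lceil n/2\rceil$ is large while $|B|+1\geq n/4$ is also large (Lemma~\ref{cla3}), a small total $\sum_{w\in B}\overline{d}_{A}(w)$ means most vertices of $B$ are adjacent to \emph{almost all} of $A$; and a small total residual index $\sum_{w\in B}\Gamma_w$ means that any vertex $w$ with large $d_A(w)$ must have $x_w$ very close to $x_{u^{*}}$. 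I would then use the edge $w_1w_2 \in E(B)$ (which exists because $\nu(G^{*}[B])=1$): applying Lemma~\ref{cla4} to this edge gives $d_A(w_1)+d_A(w_2)\leq |A|+r$, and combined with the above both $w_1,w_2$ have near-maximal $A$-degree and Perron weight. The plan is to show this creates a book $B_{r+1}$ or else contradicts the eigen-equation at $u^{*}$.

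The main obstacle I anticipate is converting the \emph{averaged} information from Lemma~\ref{cla10} (a bound on sums over all of $B$) into a \emph{pointwise} structural contradiction. The cleanest route is a refined eigenvalue estimate: I would combine the two nonnegative terms more carefully, perhaps splitting $B$ into high-$A$-degree and low-$A$-degree vertices, and track how the edge inside $B$ contributes. Specifically, since $w_1,w_2$ are adjacent and both have $A$-degree close to $|A|$, their common $A$-neighbourhood has size at least $d_A(w_1)+d_A(w_2)-|A| \approx |A| - $ (small), which is far larger than $r$; together with the edge $w_1w_2$ this produces a copy of $B_{r+1}$ inside $G^{*}[A \cup \{w_1,w_2\}]$, contradicting $B_{r+1}$-freeness. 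I expect the delicate point to be making the ``small'' slack quantitative enough (using $\rho > 4(r^2+r+2)$ from~(\ref{eq1}) and the lower bound on $|B|+1$) so that the common neighbourhood provably exceeds $r$, and ensuring the residual-index bound rules out the escape where $x_{w_1}$ or $x_{w_2}$ is small enough to make the $A$-degree estimate loose. Once that contradiction is secured, we conclude $e(A)\neq 0$.
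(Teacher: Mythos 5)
Your setup is right: assume $e(A)=0$, deduce $\nu(G^{*}[B])=1$ from Lemma~\ref{cla9}, and aim to contradict Lemma~\ref{cla10} --- this is exactly the paper's framework. But the mechanism you propose for the contradiction does not work. From $\sum_{w\in B}\overline{d}_{A}(w)x_{u^{*}}+\sum_{w\in B}\Gamma_{w}<|A|x_{u^{*}}$ you conclude that \emph{both} endpoints of the edge $w_1w_2\in E(B)$ have near-maximal $A$-degree, whence a common $A$-neighbourhood of size much larger than $r$ and a copy of $B_{r+1}$. That inference fails: the bound only controls the \emph{sum} of the non-degrees, and the distribution between the two endpoints is unconstrained. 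The configuration $d_A(w_1)=r$, $d_A(w_2)=|A|-1$ with $N_A(w_1)\subseteq N_A(w_2)$ and every other $w\in B$ joined to all of $A$ satisfies $\sum_{w\in B}\overline{d}_A(w)=|A|-r+1<|A|$, yet contains no book $B_{r+1}$ at all (the edge $w_1w_2$ lies in at most $r$ triangles), and it is precisely the near-extremal structure one must fight --- essentially $K^{r,r}_{s,t}$ with the special vertex placed inside $B$ instead of being $u^{*}$'s non-neighbour. Your hedge about ``the escape where $x_{w_1}$ or $x_{w_2}$ is small'' gestures at this but misdiagnoses it: the escape is not that degree estimates become loose, it is that no book exists, so $B_{r+1}$-freeness cannot supply the contradiction at all in this configuration.

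What the paper actually does here is structurally different and substantially harder. It first classifies $G^{*}[B]$ (star $K_{1,b}$ or triangle, plus isolated vertices) and, in each case, pins down exact local structure via spectral maximality: for a single edge $w_0w_1$ in $B$ it proves $N_A(w_0)\subseteq N_A(w_1)$ by a rotation (Lemma~\ref{lem2}), which requires verifying that the rotated graph stays \emph{non-bipartite} --- a genuinely delicate step involving an analysis of all $5$-cycles through $w_0w_1$ and an auxiliary edge addition using $r\geq1$; edge-maximality then forces $d_A(w_0)=r$ exactly and $d_A(w_1)\leq|A|-1$. Only then does the contradiction with Lemma~\ref{cla10} arrive, and it hinges on the residual-index term rather than on a book: since $w_0$ has only $r+1$ neighbours, $x_{w_0}\leq\frac{r+1}{\rho}x_{u^{*}}$, hence $\Gamma_{w_0}\geq r\big(1-\frac{r+1}{\rho}\big)x_{u^{*}}$, and $\overline{d}_A(w_0)+\overline{d}_A(w_1)+\Gamma_{w_0}/x_{u^{*}}\geq|A|+1-\frac{r(r+1)}{\rho}>|A|$ by~(\ref{eq1}). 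Parallel arguments, each with its own lower bound $d_A(w_0)\geq r$ or $r-1$ obtained by structure-preserving edge additions and a split on the surplus $a_1$, handle the star with $2\leq b\leq r$ and the triangle. None of these ingredients --- the non-bipartiteness check, the exact degree forcing, the case analysis, and the quantitative use of $\Gamma_{w_0}$ --- appear in your plan, so the proposal has a genuine gap at its central step.
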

\begin{proof}
Assume that $e(A)=0$. Then $\nu(G^{*}[A])=0$. By Lemma \ref{cla9}, $\nu(G^{*}[B])=1$.

\vspace{1.8mm}
\noindent{\bf Case 1.} $G^{*}[B]$ is a star $K_{1,b}$ with possibly some isolated vertices.
\vspace{1.8mm}

In this case, we have $e(B)=b$. Next we divide the proof into the following two cases according to different values of $b$.

\vspace{1.8mm}
\noindent{\bf Case 1.1.} $b=1$.
\vspace{1.8mm}

Let $w_0w_1$ be the only edge in $G^{*}[B]$. Since $G^{*}$ is non-bipartite, we have $N_{A}(w_0)\neq\varnothing$ and $N_{A}(w_1)\neq\varnothing$. Without loss of generality, assume that $x_{w_0}\leq x_{w_1}$. Now we prove the following claim.

\begin{claim}\label{claa}
$N_{A}(w_0)\subseteq N_{A}(w_1)$.
\end{claim}
\begin{proof}
Suppose to the contrary that $N_{A}(w_0)\nsubseteq N_{A}(w_1)$. Then there exists a vertex $u_0\in N_{A}(w_0)\backslash N_{A}(w_1)$. Let $G'=G^{*}-u_0w_0+u_0w_1$. Then $G'$ is still $B_{r+1}$-free. Otherwise, $G'$ contains a $B_{r+1}$ and $u_0w_1\in E(B_{r+1})$, and hence $u_0$ and $w_1$ have at least one common neighbour in $G'$, which contradicts that $N_{G'}(u_0)\cap N_{G'}(w_1)=\varnothing$.

Next we prove that $G'$ is still non-bipartite. If $G'$ is bipartite, then $u_0w_0\in E(C)$, where $C$ is an arbitrary odd cycle in $G^{*}$. Since $e(A)=0$, we have $w_0w_1\in E(C)$. Note that $u_0w_1\notin E(G^{*})$. Then $C$ must be a 5-cycle. Let $C_1,C_2,\ldots,C_t$ be all the 5-cycles in $G^{*}$. Denote by $u_i$ the neighbour of $w_1$ in $C_i\cap A$, where $1\leq i\leq t$ (see Fig. \ref{f3}).
Moreover, $u_iw_0, uw_0, uw_1\notin E(G^{*})$ for $1\leq i\leq t$ and each vertex $u\in A\backslash\{u_0,u_1\ldots,u_t\}$.
Otherwise, we can obtain new odd cycles of length at least $3$, a contradiction.
By the maximality of $\rho(G^{*})$, then $G^{*}$ is also edge-maximal, and hence $N_{G^{*}}(w)=A$ for each vertex $w\in B\backslash\{w_0,w_1\}$.
Now we determine the structure of $G^{*}$ as Fig. \ref{f3}.
Define $G''=G^{*}+w_0u_1$. Note that $r\geq1$. Then $G''$ is still a non-bipartite $B_{r+1}$-free graph and $\rho(G'')>\rho(G^{*})$, which contradicts the maximality of $\rho(G^{*})$. Hence $G'$ is non-bipartite.
\begin{figure}
\centering
\includegraphics[width=0.65\textwidth]{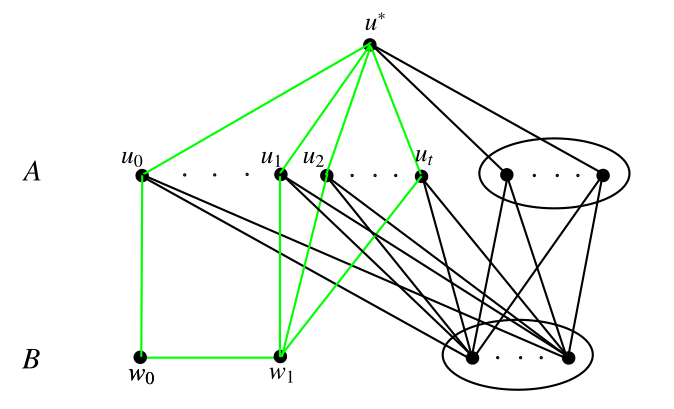}
\caption{The graph $G^{*}$ with $N_{A}(w_0)\nsubseteq N_{A}(w_1)$.}
\label{f3}
\end{figure}
Now we obtain that $G'$ is $B_{r+1}$-free and non-bipartite. By Lemma \ref{lem2}, then $\rho(G')>\rho(G^{*})$, which contradicts the maximality of $\rho(G^{*})$.
\end{proof}

\begin{claim}\label{clab}
$d_{A}(w_0)=r$.
\end{claim}
\begin{proof}
By Claim \ref{claa}, we have $N_{A}(w_0)\subseteq N_{A}(w_1)$. Since $G^{*}$ is $B_{r+1}$-free, $d_{A}(w_0)\leq r$. Furthermore, if $d_{A}(w_0)\leq r-1$, then we can construct a new graph $G'$ obtained from $G^{*}$ by adding $r-d_{A}(w_0)$ edges between $w_0$ and $N_{A}(w_1)\backslash N_{A}(w_0)$. It is obvious that $G'$ is still a non-bipartite $B_{r+1}$-free graph with $\rho(G')>\rho(G^{*})$, a contradiction.
Hence $d_{A}(w_0)=r$.
\end{proof}

Moreover, we can prove that
\begin{eqnarray}\label{eq8}
d_{A}(w_1)\leq|A|-1.
\end{eqnarray}
In fact, if $d_{A}(w_1)=|A|$, then $\rho x_{w_1}=x_{w_0}+\sum_{u\in A}x_{u}=x_{w_0}+\rho x_{u^{*}}>\rho x_{u^{*}}$, and hence $x_{w_1}>x_{u^{*}}$, which contradicts the choice of $u^{*}$.
By the maximality of $\rho(G^{*})$, we have $N_{A}(w)=A$ for each vertex $w\in B\backslash\{w_0, w_1\}$. Now we obtain the structure of $G^{*}$ as Fig. \ref{f4}.
\begin{figure}
\centering
\includegraphics[width=0.65\textwidth]{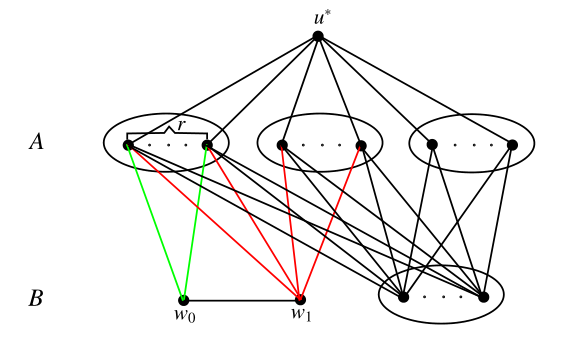}
\caption{The graph $G^{*}$ with $e(B)=1$.}
\label{f4}
\end{figure}
By Claim 2, we have $d_{A}(w_0)=r$, and hence $\overline{d}_{A}(w_0)=|A|-r$. By (\ref{eq8}), it follows that $\overline{d}_{A}(w_1)\geq1$. Now, let $N_{A}(w_0)=\{u_1,\ldots,u_r\}$. Since $\rho x_{w_0}=x_{w_1}+\sum_{i=1}^{r}x_{u_i}\leq(r+1)x_{u^{*}}$, $x_{w_0}\leq\frac{r+1}{\rho}x_{u^{*}}$. Hence $\Gamma_{w_0}=d_{A}(w_0)(x_{u^{*}}-x_{w_0})=r(x_{u^{*}}-x_{w_0})\geq r\big(1-\frac{r+1}{\rho}\big)x_{u^{*}}$. Then we have
\begin{eqnarray*}
\sum_{w\in B}\overline{d}_{A}(w)x_{u^{*}}+\sum_{w\in B}\Gamma_{w}&\geq&\overline{d}_{A}(w_0)x_{u^{*}}+\overline{d}_{A}(w_1)x_{u^{*}}+\Gamma_{w_0}\\
&\geq&\Big(|A|-r+1\Big)x_{u^{*}}+r\bigg(1-\frac{r+1}{\rho}\bigg)x_{u^{*}}\\
&=&\bigg(|A|+1-\frac{r(r+1)}{\rho}\bigg)x_{u^{*}}\\
&>&|A|x_{u^{*}},
\end{eqnarray*}
which contradicts Lemma \ref{cla10}. The last inequality follows from (\ref{eq1}).

\vspace{1.8mm}
\noindent{\bf Case 1.2.} $b\geq2$.
\vspace{1.8mm}

Let $w_0$ be the center vertex and $w_1,\ldots,w_b$ be the leaf vertices of $K_{1,b}$. Similar to (\ref{eq8}), for $1\leq i\leq b$, we have
\begin{eqnarray}\label{eq9}
d_{A}(w_i)\leq|A|-1.
\end{eqnarray}
Now we claim that $d_{B}(w_0)=b\leq r$. Assume to the contrary that $b\geq r+1$. For $w_0w_1\in E(B)$, it follows from (\ref{eq6}) and (\ref{eq9}) that
\begin{eqnarray*}
\sum_{w\in B}\overline{d}_{A}(w)x_{u^{*}}&\geq&\bigg(\overline{d}_{A}(w_0)+\overline{d}_{A}(w_1)+\sum_{i=2}^{b}\overline{d}_{A}(w_i)\bigg)x_{u^{*}}\\
&\geq&\Big(|A|-r+b-1\Big)x_{u^{*}}\\
&\geq&|A|x_{u^{*}},
\end{eqnarray*}
contradicting Lemma \ref{cla10}. Hence $2\leq b\leq r$.

Let $d_{A}(w_0)=a$. Then we have $a\geq r$. In fact, if $a\leq r-1$, then we can construct a new graph $G'$ obtained from $G^{*}$ by adding $r-a$ edges between $w_0$ and $A\backslash N_{A}(w_0)$. Moreover, $G'$ is a non-bipartite $B_{r+1}$-free graph with $\rho(G')>\rho(G^{*})$, a contradiction. Now assume that $a=r+a_1$, where $a_1\geq0$. Next we shall divide the proof into the following two subcases according to different values of $a_1$.

\vspace{1.8mm}
\noindent{\bf Case 1.2.1.} $a_1\geq1$.
\vspace{1.8mm}

Since $G^{*}$ is $B_{r+1}$-free, we have $\overline{d}_{A}(w_i)\geq a_1$ for $1\leq i\leq b$. Hence $\sum_{i=1}^{b}\overline{d}_{A}(w_i)\geq ba_1$. Note that $\overline{d}_{A}(w_0)=|A|-r-a_1$ and $b\geq2$.
If $a_1\geq r$, then we have
\begin{eqnarray*}
\sum_{w\in B}\overline{d}_{A}(w)x_{u^{*}}&\geq&\bigg(\overline{d}_{A}(w_0)+\sum_{i=1}^{b}\overline{d}_{A}(w_i)\bigg)x_{u^{*}}\\
&\geq&\Big(|A|+(b-1)a_1-r\Big)x_{u^{*}}\\
&\geq&\Big(|A|+a_1-r\Big)x_{u^{*}}\\
&\geq&|A|x_{u^{*}},
\end{eqnarray*}
which contradicts Lemma \ref{cla10}. Hence $1\leq a_1\leq r-1$. Using the eigen-equation, we obtain that
$\rho x_{w_0}=\sum_{u\in N_{A}(w_0)}x_{u}+\sum_{i=1}^{b}x_{w_i}\leq(a+b)x_{u^{*}}=(r+a_1+b)x_{u^{*}}$. Then we have
\begin{eqnarray}\label{eq10}
x_{w_0}\leq\frac{r+a_1+b}{\rho}x_{u^{*}},
\end{eqnarray}
and hence $\Gamma_{w_0}=d_{A}(w_0)(x_{u^{*}}-x_{w_0})=(r+a_1)(x_{u^{*}}-x_{w_0})\geq(r+a_1)\Big(1-\frac{r+a_1+b}{\rho}\Big)x_{u^{*}}$. Then
\begin{eqnarray}\label{eq11}
\sum_{w\in B}\overline{d}_{A}(w)x_{u^{*}}+\sum_{w\in B}\Gamma_{w}&\geq&\Bigg(\overline{d}_{A}(w_0)+\sum_{i=1}^{b}\overline{d}_{A}(w_i)\Bigg)x_{u^{*}}+\Gamma_{w_0}\nonumber\\
&\geq&\Big(|A|-r-a_1+ba_1\Big)x_{u^{*}}+(r+a_1)\Bigg(1-\frac{r+a_1+b}{\rho}\Bigg)x_{u^{*}}\nonumber\\
&=&\Bigg(|A|+ba_1-\frac{(r+a_1)(r+a_1+b)}{\rho}\Bigg)x_{u^{*}}.
\end{eqnarray}
Let $g(b)=ba_1-\frac{(r+a_1)(r+a_1+b)}{\rho}$. Then $g'(b)=a_1-\frac{r+a_1}{\rho}$. By $1\leq a_1\leq r-1$ and (\ref{eq1}), we have
$g'(b)\geq1-\frac{2r-1}{\rho}>0$, which implies that $g(b)$ increases monotonically with respect to $b\in[2, r]$. By (\ref{eq11}), we have
\begin{eqnarray*}
\sum_{w\in B}\overline{d}_{A}(w)x_{u^{*}}+\sum_{w\in B}\Gamma_{w}&\geq&\Bigg(|A|+2a_1-\frac{(r+a_1)(r+a_1+2)}{\rho}\Bigg)x_{u^{*}}\\
&\geq&\Bigg(|A|+2-\frac{(2r-1)(2r+1)}{\rho}\Bigg)x_{u^{*}}\\
&>&|A|x_{u^{*}},
\end{eqnarray*}
contradicting Lemma \ref{cla10}.

\vspace{1.8mm}
\noindent{\bf Case 1.2.2.} $a_1=0$.
\vspace{1.8mm}

In this case, we have $d_{A}(w_0)=a=r$. It follows from (\ref{eq9}) that $\overline{d}_{A}(w_i)\geq1$ for $1\leq i\leq b$. Substitute $a_1=0$ into (\ref{eq10}), we have $x_{w_0}\leq\frac{r+b}{\rho}x_{u^{*}}$. Then
\begin{eqnarray*}
\sum_{w\in B}\overline{d}_{A}(w)x_{u^{*}}+\sum_{w\in B}\Gamma_{w}&\geq&\bigg(\overline{d}_{A}(w_0)+\sum_{i=1}^{b}\overline{d}_{A}(w_i)\bigg)x_{u^{*}}+\Gamma_{w_0}\\
&\geq&(|A|-r+b)x_{u^{*}}+r\bigg(1-\frac{r+b}{\rho}\bigg)x_{u^{*}}\\
&=&\bigg(|A|+b-\frac{r(r+b)}{\rho}\bigg)x_{u^{*}}.
\end{eqnarray*}
Recall that $2\leq b\leq r$. Combining (\ref{eq1}), we have
$$\sum_{w\in B}\overline{d}_{A}(w)x_{u^{*}}+\sum_{w\in B}\Gamma_{w}\geq\bigg(|A|+2-\frac{2r^{2}}{\rho}\bigg)x_{u^{*}}>|A|x_{u^{*}},$$
which contradicts Lemma \ref{cla10}.

\vspace{1.8mm}
\noindent{\bf Case 2.} $G^{*}[B]$ is a triangle with possibly some isolated vertices.
\vspace{1.8mm}

Let $w_0w_1w_2w_0$ be the triangle in $G^{*}[B]$. Note that $d_{B}(w_0)=d_{B}(w_1)=d_{B}(w_2)=2$. Without loss of generality, we assume that $d_{A}(w_0)=a$.
Then we have $a\geq r-1$. Otherwise we can obtain a non-bipartite $B_{r+1}$-free graph $G'$ with $\rho(G')>\rho(G^{*})$ by adding $r-1-a$ edges between $w_0$ and $A\backslash N_{A}(w_0)$ in $G^{*}$, a contradiction. Now assume that $a=r-1+a_1$, where $a_1\geq0$.

\vspace{1.8mm}
\noindent{\bf Case 2.1.} $a_1\geq1$.
\vspace{1.8mm}

Note that $G^{*}$ is $B_{r+1}$-free. Then $\overline{d}_{A}(w_i)\geq a_1$ for $1\leq i\leq 2$. Hence $\sum_{i=1}^{2}\overline{d}_{A}(w_i)\geq 2a_1$. Assume that $a_1\geq r-1$. Combining $\overline{d}_{A}(w_0)=|A|+1-r-a_1$, we have
\begin{eqnarray*}
\sum_{w\in B}\overline{d}_{A}(w)x_{u^{*}}&\geq&\Big(\overline{d}_{A}(w_0)+\overline{d}_{A}(w_1)+\overline{d}_{A}(w_2)\Big)x_{u^{*}}\\
&\geq&\Big(|A|+a_1+1-r\Big)x_{u^{*}}\\
&\geq&|A|x_{u^{*}},
\end{eqnarray*}
which contradicts Lemma \ref{cla10}. Hence $1\leq a_1\leq r-2$. Note that
$\rho x_{w_0}=\sum_{u\in N_{A}(w_0)}x_{u}+x_{w_1}+x_{w_2}\leq(a+2)x_{u^{*}}=(r+a_1+1)x_{u^{*}}$. Then we have
\begin{eqnarray}\label{eq10.0}
x_{w_0}\leq\frac{r+a_1+1}{\rho}x_{u^{*}},
\end{eqnarray}
and hence $\Gamma_{w_0}=d_{A}(w_0)(x_{u^{*}}-x_{w_0})=(r-1+a_1)(x_{u^{*}}-x_{w_0})\geq(r-1+a_1)\Big(1-\frac{r+a_1+1}{\rho}\Big)x_{u^{*}}$.
By $1\leq a_1\leq r-2$ and (\ref{eq1}), we have
\begin{eqnarray*}\label{eq11}
\sum_{w\in B}\overline{d}_{A}(w)x_{u^{*}}+\sum_{w\in B}\Gamma_{w}&\geq&\Big(\overline{d}_{A}(w_0)+\overline{d}_{A}(w_1)+\overline{d}_{A}(w_2)\Big)x_{u^{*}}+\Gamma_{w_0}\nonumber\\
&\geq&\Big(|A|-r+1+a_1\Big)x_{u^{*}}+(r-1+a_1)\Bigg(1-\frac{r+a_1+1}{\rho}\Bigg)x_{u^{*}}\nonumber\\
&=&\Bigg(|A|+2a_1-\frac{(r-1+a_1)(r+a_1+1)}{\rho}\Bigg)x_{u^{*}}\\
&\geq&\Bigg(|A|+2-\frac{(2r-3)(2r-1)}{\rho}\Bigg)x_{u^{*}}\\
&>&|A|x_{u^{*}},
\end{eqnarray*}
which contradicts Lemma \ref{cla10}.

\vspace{1.8mm}
\noindent{\bf Case 2.2.} $a_1=0$.
\vspace{1.8mm}

Now $d_{A}(w_0)=a=r-1$. For $1\leq i\leq 2$, similar to (\ref{eq8}), we have $d_{A}(w_i)\leq|A|-1$, which implies that $\overline{d}_{A}(w_i)\geq1$.
Substitute $a_1=0$ into (\ref{eq10.0}), we have $x_{w_0}\leq\frac{r+1}{\rho}x_{u^{*}}$. Combining (\ref{eq1}), we can obtain that
\begin{eqnarray*}
\sum_{w\in B}\overline{d}_{A}(w)x_{u^{*}}+\sum_{w\in B}\Gamma_{w}&\geq&\bigg(\overline{d}_{A}(w_0)+\overline{d}_{A}(w_1)+\overline{d}_{A}(w_2)\bigg)x_{u^{*}}+\Gamma_{w_0}\\
&\geq&(|A|-r+1)x_{u^{*}}+(r-1)\bigg(1-\frac{r+1}{\rho}\bigg)x_{u^{*}}\\
&=&\bigg(|A|+1-\frac{(r-1)(r+1)}{\rho}\bigg)x_{u^{*}}\\
&>&|A|x_{u^{*}},
\end{eqnarray*}
which also contradicts Lemma \ref{cla10}.
\end{proof}

To characterize the structure of $G^{*}$, we finally need to prove the following lemma.
\begin{lemma}\label{lem6}
If $G^{*}\cong K_{|A|-1,|B|+1}^{r,r}$, then $G^{*}\cong K_{\lfloor\frac{n-1}{2}\rfloor,\lceil\frac{n-1}{2}\rceil}^{r, r}$.
\end{lemma}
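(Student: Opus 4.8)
The plan is to show that, among all graphs $K_{s,t}^{r,r}$ with $s+t=n-1$ fixed, the largest spectral radius is attained precisely when the two parts are as balanced as possible. Since $G^{*}$ is extremal and $G^{*}\cong K_{|A|-1,|B|+1}^{r,r}$ with $(|A|-1)+(|B|+1)=|A|+|B|=n-1$, this balancing fact will force $\{|A|-1,|B|+1\}=\{\lfloor\frac{n-1}{2}\rfloor,\lceil\frac{n-1}{2}\rceil\}$, which is exactly the assertion (recall $K_{s,t}^{r,r}\cong K_{t,s}^{r,r}$, so it suffices to identify the unordered pair of part sizes).

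First I would reduce the characteristic polynomial of Lemma \ref{lem4} to a form depending on $s,t$ only through the product $p=st$. Writing $m=s+t=n-1$ and factoring out $x$ from $f(s,t,r,x)$, and using $sr^{2}+tr^{2}=(s+t)r^{2}=mr^{2}$ together with $2str=2pr$, the nonzero roots of $f$ are exactly the roots of
$$g_{p}(x)=x^{4}-(2r+p)x^{2}-2r^{2}x+2pr-mr^{2}.$$
Thus $\rho(K_{s,t}^{r,r})$ equals the largest root of $g_{p}$, and the only dependence on the actual split $(s,t)$ is through $p=st$.

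Next, for two admissible products $p_{1}<p_{2}$ sharing the same $m=n-1$, the comparison is immediate from
$$g_{p_{2}}(x)-g_{p_{1}}(x)=(p_{2}-p_{1})\bigl(2r-x^{2}\bigr).$$
Let $\rho_{1}$ be the largest root of $g_{p_{1}}$. By (\ref{eq1}) every spectral radius in play satisfies $\rho_{1}>4(r^{2}+r+2)$, so in particular $\rho_{1}^{2}>2r$; hence $g_{p_{2}}(\rho_{1})=(p_{2}-p_{1})(2r-\rho_{1}^{2})<0$. Since $g_{p_{2}}(x)\to+\infty$ as $x\to+\infty$, the largest root $\rho_{2}$ of $g_{p_{2}}$ exceeds $\rho_{1}$. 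Therefore $\rho(K_{s,t}^{r,r})$ is strictly increasing in $p=st$, and because $st$ is maximized (for fixed $s+t=n-1$) exactly at $s=\lfloor\frac{n-1}{2}\rfloor$, $t=\lceil\frac{n-1}{2}\rceil$, the balanced graph $K_{\lfloor\frac{n-1}{2}\rfloor,\lceil\frac{n-1}{2}\rceil}^{r,r}$ has strictly the largest spectral radius among all $K_{s,t}^{r,r}$ with $s+t=n-1$, unless $(s,t)$ is already balanced.

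To finish, I would note that $K_{\lfloor\frac{n-1}{2}\rfloor,\lceil\frac{n-1}{2}\rceil}^{r,r}$ is a non-bipartite $B_{r+1}$-free graph of order $n$ (as observed in the introduction, with $r\le\min\{\lfloor\frac{n-1}{2}\rfloor,\lceil\frac{n-1}{2}\rceil\}$ guaranteed by $n\geq8(r^{2}+r+4)$), hence an admissible competitor for $G^{*}$. If $\{|A|-1,|B|+1\}$ were not the balanced pair, the monotonicity above would yield $\rho\bigl(K_{\lfloor\frac{n-1}{2}\rfloor,\lceil\frac{n-1}{2}\rceil}^{r,r}\bigr)>\rho\bigl(K_{|A|-1,|B|+1}^{r,r}\bigr)=\rho(G^{*})$, contradicting the maximality of $\rho(G^{*})$. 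Thus $\{|A|-1,|B|+1\}=\{\lfloor\frac{n-1}{2}\rfloor,\lceil\frac{n-1}{2}\rceil\}$ and $G^{*}\cong K_{\lfloor\frac{n-1}{2}\rfloor,\lceil\frac{n-1}{2}\rceil}^{r,r}$. The only point requiring care is the sign condition $\rho^{2}>2r$ used in the quartic comparison; this is precisely where the lower bound (\ref{eq1}) on $\rho(G^{*})$ enters, and it holds with room to spare under $n\geq8(r^{2}+r+4)$. Beyond that, the argument is a routine balancing computation rather than a genuine obstacle.
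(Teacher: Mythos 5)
Your proposal is correct and follows essentially the same route as the paper: both compare the characteristic polynomials from Lemma \ref{lem4} for two splits of $s+t=n-1$ (your identity $g_{p_2}(x)-g_{p_1}(x)=(p_2-p_1)(2r-x^2)$ is exactly the paper's $f(s-1,t+1,r,x)-f(s,t,r,x)=x\big((t+1-s)x^{2}+2(s-t-1)r\big)$ in disguise), use the lower bound (\ref{eq1}) to guarantee $\rho^{2}>2r$, and conclude via the extremality of $G^{*}$ that the parts must be balanced. Your reformulation as monotonicity in $p=st$ is a mild repackaging of the paper's single-step shift $(s,t)\mapsto(s-1,t+1)$, not a genuinely different argument.
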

\begin{proof}
For convenience, we write $s=|A|-1$ and $t=|B|+1$. Then $s+t=|A|+|B|=n-1$. Without loss of generality, assume that $s\geq t$. It suffices to prove $s\leq t+1$.
Assume that $s\geq t+2$. Now we construct a new graph $G'=K_{s-1,t+1}^{r,r}$.
According to Lemma \ref{lem4}, we know that $\rho\big(K_{s-1,t+1}^{r,r}\big)$ is the largest root of $f(s-1,t+1,r,x)=0$. Moreover,
$$f(s-1,t+1,r,x)-f(s,t,r,x)=x\cdot\Big((t+1-s)x^{2}+2(s-t-1)r\Big)<0$$
for $x>\sqrt{2r}$. By (\ref{eq1}), we have $\rho\big(K_{s,t}^{r,r}\big)>4(r^{2}+r+1)>\sqrt{2r}$. Hence $\rho\big(K_{s-1,t+1}^{r,r}\big)>\rho\big(K_{s,t}^{r,r}\big)$.
Note that $K_{s-1,t+1}^{r,r}$ is still a non-bipartite $B_{r+1}$-free graph. This contradicts the maximality of $\rho(G^{*}).$
\end{proof}

Now we are in a position to present the proof of Theorem \ref{main}.

\medskip
\noindent  \textbf{Proof of Theorem \ref{main}}.
By Theorem \ref{cla11}, $e(A)\neq0$, and hence $\nu(G^{*}[A])=1$. By Lemma \ref{cla9}, we have $\nu(G^{*}[A])+\nu(G^{*}[B])=1$, and hence $B$ is an independent set. Now we divide the proof into two cases according to the structure of $G^{*}[A]$.

\vspace{1.8mm}
\noindent{\bf Case 1.} $G^{*}[A]$ is a star $K_{1,a}$ with possibly some isolated vertices.
\vspace{1.8mm}

In this case, we divide the proof into the following two cases according to the different values of $a$.

\vspace{1.8mm}
\noindent{\bf Case 1.1.} $a=1$.
\vspace{1.8mm}

Let $u_0u_1$ be the only edge in $E(A)$. By the maximality of $\rho(G^{*})$, we have $N_{B}(u_0)\neq\varnothing$ and $N_{B}(u_1)\neq\varnothing$.
Without loss of generality, assume that $x_{u_0}\leq x_{u_1}$. Now we have the following claim.

\begin{claim}\label{clac}
$N_{B}(u_0)\subseteq N_{B}(u_1)$.
\end{claim}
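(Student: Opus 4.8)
The claim to prove is Claim~\ref{clac}: that $N_{B}(u_0)\subseteq N_{B}(u_1)$, where $u_0u_1$ is the unique edge of $G^{*}[A]$ and $x_{u_0}\leq x_{u_1}$. The plan is to argue by contradiction via the standard edge-switching technique (Lemma~\ref{lem2}), exactly mirroring the structure of the earlier Claim~\ref{claa}. Assume there is a vertex $w_0\in N_{B}(u_0)\setminus N_{B}(u_1)$, and form $G'=G^{*}-u_0w_0+u_1w_0$. Since $x_{u_1}\geq x_{u_0}$, if I can show $G'$ is still a non-bipartite $B_{r+1}$-free graph, then Lemma~\ref{lem2} (applied with the roles of $v_i,v_j$ being $u_1,u_0$ and $S=\{w_0\}$) yields $\rho(G')>\rho(G^{*})$, contradicting the maximality of $\rho(G^{*})$.

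First I would verify that $G'$ remains $B_{r+1}$-free. The only new edge is $u_1w_0$, so any newly created book must use it; this forces $u_1$ and $w_0$ to share at least $r+1$ common neighbours in $G'$. I would bound $|N_{G'}(u_1)\cap N_{G'}(w_0)|$: since $B$ is independent (established in the main proof before this case) and $G^{*}[A]$ is the star $K_{1,1}$ plus isolated vertices, the adjacency structure is rigid enough that $w_0$'s neighbours lie in $A$ while $u_1$'s neighbours split between $u_0$ and $B$; the common neighbourhood is controlled by the $B_{r+1}$-freeness of $G^{*}$ together with the fact that $u^{*}$ plays a special role, so I expect at most $r$ common neighbours, ruling out a new book.

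The more delicate step, as in Claim~\ref{claa}, is to show that $G'$ stays non-bipartite. If $G'$ were bipartite, then every odd cycle of $G^{*}$ must use the deleted edge $u_0w_0$. Because $B$ is independent and $u_0u_1$ is the only edge inside $A$, such an odd cycle is highly constrained: it must pass through $u^{*}$ (the unique vertex adjacent to all of $A$) and through the edge $u_0u_1$, so the shortest odd cycles are short (length $3$ or $5$). I would then enumerate these short odd cycles, identify the vertices through which they pass, and argue—using the edge-maximality of $G^{*}$ guaranteed by $\rho(G^{*})$ being maximum—that I can add a further edge to restore non-bipartiteness while keeping the graph $B_{r+1}$-free, producing a graph of strictly larger spectral radius. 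This contradiction forces $G'$ to be non-bipartite. The main obstacle is precisely this bipartiteness analysis: I must carefully track which added edge both preserves $B_{r+1}$-freeness (using $r\geq1$) and strictly increases $\rho$, and confirm that the structural picture of $G^{*}$ leaves no configuration uncovered.

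Having established both properties, Lemma~\ref{lem2} applies and delivers $\rho(G')>\rho(G^{*})$, the desired contradiction; hence no such $w_0$ exists and $N_{B}(u_0)\subseteq N_{B}(u_1)$.
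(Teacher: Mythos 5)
Your overall skeleton — switch $G'=G^{*}-u_0w_0+u_1w_0$ for $w_0\in N_B(u_0)\setminus N_B(u_1)$, verify $B_{r+1}$-freeness and non-bipartiteness, then invoke Lemma~\ref{lem2} with $x_{u_1}\geq x_{u_0}$ — is exactly the paper's, but both verification steps are defective as you state them. For $B_{r+1}$-freeness, ``at most $r$ common neighbours'' is not enough: an edge of a book need not be the common edge, it only needs to lie in \emph{some} triangle, so ruling out $u_1w_0$ from every copy of $B_{r+1}$ requires $N_{G'}(u_1)\cap N_{G'}(w_0)=\varnothing$ (otherwise a book could use $u_1w_0$ as a page edge, with common edge $u_1z$ for a common neighbour $z$). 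The empty intersection is in fact what holds and what the paper uses: since $B$ is independent and $w_0\in B$ is not adjacent to $u^{*}$, we have $N_{G'}(w_0)\subseteq A\setminus\{u_0\}$, while $N_{G'}(u_1)\subseteq\{u^{*},u_0\}\cup B$, and these sets are disjoint. Your own structural observations essentially give this, but the inference you actually wrote down (``at most $r$ common neighbours, ruling out a new book'') is invalid.

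The more serious gap is the non-bipartiteness step, which you call ``the main obstacle'' and leave unexecuted, and whose plan rests on a false premise. You assert that if $G'$ were bipartite, every odd cycle of $G^{*}$ would use the deleted edge $u_0w_0$, and you propose enumerating short odd cycles and repairing with an extra edge via edge-maximality, as in Claim~\ref{claa}. But here $u_0,u_1\in A=N(u^{*})$, so $u^{*}u_0u_1$ is a triangle of $G^{*}$, and the switch deletes only $u_0w_0$; this triangle survives intact in $G'$, which is therefore trivially non-bipartite. That is precisely why the paper dispatches this step in one sentence, in contrast to Claim~\ref{claa}, where the elaborate 5-cycle analysis was genuinely needed because there the unique edge lay inside $B$ (with $e(A)=0$), so no triangle through $u^{*}$ existed and the switch really could kill all odd cycles. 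Transplanting that machinery to the present case is a wrong approach, and since you never carry it out, your proof of Claim~\ref{clac} is incomplete at exactly the point where a one-line observation suffices.
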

\begin{proof}
If $N_{B}(u_0)\nsubseteq N_{B}(u_1)$, then there exists a vertex $w\in N_{B}(u_0)\backslash N_{B}(u_1)$. Let $G'=G^{*}-wu_0+wu_1$. Then $G'$ is still $B_{r+1}$-free. If not, $G'$ contains a $B_{r+1}$ and $wu_1\in E(B_{r+1})$, which contradicts that $N_{G'}(u_1)\cap N_{G'}(w)=\varnothing$. Note that $G'$ is still non-bipartite.
By Lemma \ref{lem2}, $\rho(G')>\rho(G^{*})$, which contradicts the maximality of $\rho(G^{*})$.
\end{proof}
\begin{figure}
\centering
\includegraphics[width=0.6\textwidth]{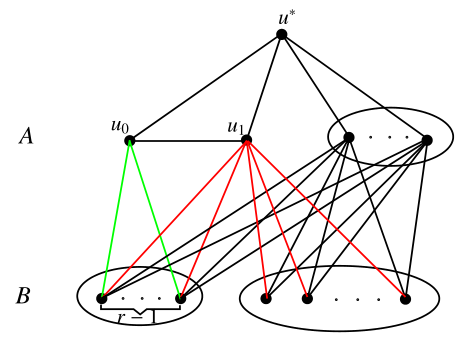}\\
\caption{The graph $G^{*}$ with $e(A)=1$.}
\label{f5}
\end{figure}
\begin{claim}\label{clad}
$d_{B}(u_0)=r-1.$
\end{claim}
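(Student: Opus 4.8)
The plan is to derive the equality $d_{B}(u_0)=r-1$ by establishing the two bounds $d_{B}(u_0)\le r-1$ and $d_{B}(u_0)\ge r-1$ separately: the upper bound will come from the $B_{r+1}$-free hypothesis applied to the edge $u_0u_1$, and the lower bound from the maximality of $\rho(G^{*})$ via an edge-addition argument.

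For the upper bound I would first identify the common neighbours of the edge $u_0u_1$. Since we are in the case $a=1$, the induced graph $G^{*}[A]$ consists only of the edge $u_0u_1$ together with isolated vertices, so a vertex of $A$ adjacent to both $u_0$ and $u_1$ would force a second edge in $G^{*}[A]$; hence $u_0$ and $u_1$ have no common neighbour inside $A$. On the other hand $u^{*}$ is adjacent to every vertex of $A$, in particular to both $u_0$ and $u_1$, and by Claim \ref{clac} the common neighbours lying in $B$ are exactly $N_{B}(u_0)\cap N_{B}(u_1)=N_{B}(u_0)$. Thus $u_0$ and $u_1$ have precisely $1+d_{B}(u_0)$ common neighbours, each giving a page of a book with spine $u_0u_1$. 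Since $G^{*}$ is $B_{r+1}$-free, this number is at most $r$, so $d_{B}(u_0)\le r-1$.

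For the lower bound I would argue by contradiction, assuming $d_{B}(u_0)\le r-2$, and exhibit an edge whose insertion strictly increases the spectral radius while keeping the graph non-bipartite and $B_{r+1}$-free. By Lemma \ref{cla3} the set $B$ is large ($|B|\ge 5r+6$), so $B\setminus N_{B}(u_0)$ is nonempty; choosing $w$ in it, set $G'=G^{*}+u_0w$. Recalling that $B$ is an independent set, every neighbour of $w$ lies in $A$, and the only $A$-neighbour of $u_0$ is $u_1$, so the only possible common neighbour of $u_0$ and $w$ in $G'$ is $u_1$. If $w\notin N_{B}(u_1)$ then $u_0$ and $w$ have no common neighbour and no new triangle appears; if $w\in N_{B}(u_1)$ then the unique new triangle is $u_0u_1w$, which raises the number of common neighbours of $u_0u_1$ only to $2+d_{B}(u_0)\le r$. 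Since $G^{*}$ is connected, adding an edge strictly increases $\rho$, so $\rho(G')>\rho(G^{*})$, contradicting maximality; hence $d_{B}(u_0)\ge r-1$, and together with the upper bound we conclude $d_{B}(u_0)=r-1$.

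The step I expect to require the most care is checking that the insertion of $u_0w$ does not create a forbidden book on \emph{any} edge, not merely on $u_0u_1$. The observation that makes this routine is that the new edge can lie in at most one new triangle, its only possible apex being $u_1$; consequently only the three edges $u_0u_1$, $u_0w$ and $u_1w$ acquire an extra page, and each stays within the bound $r$. Isolating this "at most one new triangle" fact is the crux, after which the spectral comparison follows at once from Perron--Frobenius.
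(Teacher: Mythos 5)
Your proof is correct and follows essentially the same route as the paper: the upper bound $d_{B}(u_0)\le r-1$ comes from counting the common neighbours of the edge $u_0u_1$ (namely $u^{*}$ together with $N_{B}(u_0)\subseteq N_{B}(u_1)$ from Claim \ref{clac}) against the $B_{r+1}$-free hypothesis, and the lower bound from an edge-addition argument combined with the maximality of $\rho(G^{*})$. The only difference is cosmetic: the paper adds all $r-1-d_{B}(u_0)$ missing edges from $u_0$ into $N_{B}(u_1)\setminus N_{B}(u_0)$ at once, whereas you add a single edge to an arbitrary $w\in B\setminus N_{B}(u_0)$ and check $B_{r+1}$-freeness via the ``at most one new triangle, with apex $u_1$'' observation, which even spares you any implicit assumption that $N_{B}(u_1)\setminus N_{B}(u_0)$ is large enough.
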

\begin{proof}
By Claim \ref{clac}, $N_{B}(u_0)\subseteq N_{B}(u_1)$. Note that $G^{*}$ is $B_{r+1}$-free. Then we have $d_{B}(u_0)\leq r-1$. If $d_{B}(u_0)\leq r-2$, then we can construct a new graph $G'$ obtained from $G^{*}$ by adding $r-1-d_{B}(u_0)$ edges between $u_0$ and $N_{B}(u_1)\backslash N_{B}(u_0)$. Note that $G'$ is a non-bipartite $B_{r+1}$-free graph with $\rho(G')>\rho(G^{*})$. This contradicts the maximality of $\rho(G^{*})$. Hence we have $d_{B}(u_0)=r-1$.
\end{proof}

By the maximality of $\rho(G^{*})$ again, then $N_{B}(u)=B$ for any vertex $u\in A\backslash\{u_0\}$. Now we determine the structure of $G^{*}$ as Fig. \ref{f5}.

Next we shall prove that $r=1$. Assume that $r\geq2$. By Lemma \ref{cla2}, $d_{A}(u_0)\leq r$. Then we can construct a new graph $G'$ by adding $r-1$ edges between $u_0$ and $A\backslash\{u_0, u_1\}$. However, $G'$ is a still non-bipartite $B_{r+1}$-free graph
and $\rho(G')>\rho(G^{*})$, a contradiction. Hence $r=1$. By Claim \ref{clad}, $d_{B}(u_0)=0.$
Now we obtain that $G^{*}-\{u_0\}\cong K_{|A|-1,|B|+1}$ and $d_{A\backslash\{u_0\}}(u_0)=d_{B\cup\{u^{*}\}}(u_0)=1$. Hence $G^{*}\cong K_{|A|-1,|B|+1}^{1,1}$. By Lemma \ref{lem6}, we have $G^{*}\cong K_{\lfloor\frac{n-1}{2}\rfloor,\lceil\frac{n-1}{2}\rceil}^{1,1}$.

\vspace{1.8mm}
\noindent{\bf Case 1.2.} $a\geq2$.
\vspace{1.8mm}

In this case, $G^{*}[A]$ consists of a star $K_{1,a}$ and possibly some isolated vertices, where $e(A)=a\geq2$. Let $u_0$ be the center vertex and $u_1,\ldots,u_a$ be leaf vertices of $K_{1,a}$. By Lemma \ref{cla2}, we have $a\leq r$. Hence $r\geq2$. Let $d_{B}(u_0)=b$.

\begin{claim}\label{clae}
$b\geq r-1$.
\end{claim}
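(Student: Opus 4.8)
The plan is to argue by contradiction through an edge-insertion step, exactly in the spirit of the lower-bound arguments in Case~1.1 (Claim~\ref{clad}) and in Theorem~\ref{cla11} (Claim~\ref{clab}), but with a simpler justification of $B_{r+1}$-freeness. Suppose $b=d_{B}(u_0)\leq r-2$. By Lemma~\ref{cla3} we have $|B|\geq 5r+6$, while $b\leq r-2$, so $B\setminus N_{B}(u_0)\neq\varnothing$; I would fix any $w\in B\setminus N_{B}(u_0)$ and set $G'=G^{*}+u_0w$. Since $G'\supseteq G^{*}$ and $G^{*}$ is non-bipartite, $G'$ is non-bipartite, and since $G^{*}$ is connected, inserting an edge strictly increases the spectral radius, so $\rho(G')>\rho(G^{*})$. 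Thus, once I check that $G'$ is still $B_{r+1}$-free, the maximality of $\rho(G^{*})$ is contradicted and the claim $b\geq r-1$ follows.

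The heart of the argument is the $B_{r+1}$-freeness check, which reduces to inspecting only the edges whose number of common neighbours can change after inserting $u_0w$, namely those incident to $u_0$ or to $w$. Recall that $B$ is an independent set and $w\not\sim u^{*}$, so all neighbours of $w$ lie in $A$, and within $A$ the vertex $u_0$ is adjacent only to the leaves $u_1,\dots,u_a$. First, the new edge $u_0w$ lies in a triangle only with those leaves adjacent to $w$, hence in at most $a\leq r$ triangles by Lemma~\ref{cla2}. Second, for a leaf edge $u_0u_i$ the common neighbours in $G'$ are exactly $\{u^{*}\}\cup\bigl((N_{B}(u_0)\cup\{w\})\cap N_{B}(u_i)\bigr)$, because inside $A$ the leaf $u_i$ is adjacent only to $u_0$; the decisive point is that $|N_{B}(u_0)\cup\{w\}|=b+1\leq r-1$, so $u_0u_i$ lies in at most $1+(b+1)\leq r$ triangles. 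Third, an edge $wu_i$ acquires only $u_0$ as a new common neighbour, hence lies in at most one triangle. No other edge is affected, so no edge of $G'$ sits in $r+1$ triangles, and $G'$ is $B_{r+1}$-free, completing the contradiction.

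The main obstacle, as indicated above, is precisely ensuring that the leaf edges $u_0u_i$ do not gain an $(r+1)$-st page after the insertion, and this is exactly where the standing hypothesis $b\leq r-2$ is used: it forces $|N_{B}(u_0)\cup\{w\}|\leq r-1$, keeping each $u_0u_i$ within $r$ triangles. I would emphasize that, in contrast to Claim~\ref{clad} and Claim~\ref{clab}, no preliminary neighbourhood-containment statement such as $N_{B}(u_0)\subseteq N_{B}(u_i)$ is needed here; the mere size bound on $N_{B}(u_0)$ suffices, so \emph{any} $w\in B\setminus N_{B}(u_0)$ can be added. (One could even insert $r-1-b$ such edges at once, but a single insertion already contradicts maximality.)
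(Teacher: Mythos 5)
Your proof is correct and takes essentially the same route as the paper: the paper's own proof of Claim \ref{clae} likewise derives a contradiction by adding $r-1-d_B(u_0)$ edges between $u_0$ and $B\setminus N_B(u_0)$ and invoking the maximality of $\rho(G^{*})$, leaving the $B_{r+1}$-freeness verification implicit. Your single-edge variant merely fills in that omitted check, the key point being exactly your observation that $b\leq r-2$ keeps each leaf edge $u_0u_i$ in at most $1+(b+1)\leq r$ triangles.
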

\begin{proof}
Assume that $b\leq r-2$. We construct a new graph $G'$ obtained from $G^{*}$ by adding $r-1-d_{B}(u_0)$ edges between $u_0$ and $B\backslash N_{B}(u_0)$.
Then $G'$ is still a non-bipartite $B_{r+1}$-free graph with $\rho(G')>\rho(G^{*})$, which contradicts the maximality of $\rho(G^{*})$. Hence $b\geq r-1$.
\end{proof}

By Claim \ref{clae}, we assume that $b=r-1+b_1$, where $b_1\geq0$.
Let $B_0=\{w_{1},\ldots,w_{r-1}\}$ be the $r-1$ vertices in $N_{B}(u_0)$ with largest eigenvector entry, i.e., such that $x_{w_j}\geq x_{w}$, where $1\leq j\leq r-1$ and $w\in N_{B}(u_0)\backslash B_0$.

\begin{claim}\label{claf}
$N_{N_{B}(u_0)}(u_i)=B_0$ for $1\leq i\leq a.$
\end{claim}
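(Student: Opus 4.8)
The plan is to argue by contradiction against the maximality of $\rho(G^*)$, using the book constraint to bound $|N_{N_B(u_0)}(u_i)|$ from above and an eigenvector-monotone edge operation to force this neighbourhood to sit on the top $r-1$ entries. First I would record the basic count. The edge $u_0u_i$ lies in $E(A)$, and since $G^*[A]$ is a star centered at $u_0$, the only common neighbour of $u_0$ and $u_i$ inside $A\cup\{u^*\}$ is $u^*$ (indeed $N_A(u_0)=\{u_1,\dots,u_a\}$ and $N_A(u_i)=\{u_0\}$ are disjoint), while their common neighbours in $B$ are exactly $N_{N_B(u_0)}(u_i)$; here $B$ is independent and disjoint from $N(u^*)$, so every vertex of $N_B(u_0)$ has all its neighbours in $A$. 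As $G^*$ is $B_{r+1}$-free, the edge $u_0u_i$ carries at most $r$ triangles, whence $1+|N_{N_B(u_0)}(u_i)|\le r$, i.e. $|N_{N_B(u_0)}(u_i)|\le r-1=|B_0|$. In particular $|N_{N_B(u_0)}(u_i)|\ge r$ is already impossible, so if $N_{N_B(u_0)}(u_i)\ne B_0$ then necessarily $B_0\not\subseteq N(u_i)$, i.e. some top vertex $w_j\in B_0$ is missed by $u_i$.

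Assuming such a $w_j\in B_0\setminus N(u_i)$, I would split according to whether $u_i$ wastes an edge below the threshold. If $u_i$ has a neighbour $w\in N_B(u_0)\setminus B_0$, perform the swap $G'=G^*-u_iw+u_iw_j$; by the defining property of $B_0$ we have $x_{w_j}\ge x_w$, so Lemma \ref{lem2} (applied with moved vertex $u_i$, losing endpoint $w$ and gaining endpoint $w_j$) gives $\rho(G')>\rho(G^*)$. If instead $N_{N_B(u_0)}(u_i)\subseteq B_0$, then it is a proper subset, and I would simply add the edge $u_iw_j$ to obtain $G'$ with $\rho(G')>\rho(G^*)$. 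In either case $G'$ is still non-bipartite, since the triangle $u^*u_0u_i$ is untouched.

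The crux, and where I expect the real work, is verifying that $G'$ remains $B_{r+1}$-free. Deleting $u_iw$ can only destroy triangles and is harmless; adding $u_iw_j$ creates exactly one new triangle $u_0u_iw_j$ (because $u_i$ and $w_j$ have only $u_0$ as a common neighbour), so only the three edges $u_iw_j$, $u_0u_i$, $u_0w_j$ change their triangle count. The edge $u_iw_j$ gains only the single common neighbour $u_0$; the edge $u_0u_i$ loses $w$ and gains $w_j$ in the swap case (net change zero, staying $\le r$) or rises from $\le r-1$ to $\le r$ in the addition case; and the edge $u_0w_j$ gains $u_i$, but its common neighbours are precisely the leaves adjacent to $w_j$, of which there are at most $a\le r$ by Lemma \ref{cla2}. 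Hence every edge of $G'$ still carries at most $r$ triangles, so $G'$ is $B_{r+1}$-free, contradicting the maximality of $\rho(G^*)$. The only genuinely delicate point is this last bound on $u_0w_j$: it succeeds exactly because the star $G^*[A]$ has at most $r$ leaves, which is the content of Lemma \ref{cla2}. Therefore $N_{N_B(u_0)}(u_i)=B_0$ for every $1\le i\le a$.
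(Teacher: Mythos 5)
Your proposal is correct and follows essentially the same route as the paper: bound $d_{N_B(u_0)}(u_i)\le r-1$ via the edge $u_0u_i$ (whose common neighbours are $u^*$ plus $N_{N_B(u_0)}(u_i)$), then contradict the maximality of $\rho(G^*)$ by either adding $u_iw_j$ or performing the swap $G^*-u_iw+u_iw_j$ toward the top-entry vertex $w_j\in B_0$ via Lemma \ref{lem2}, with $B_{r+1}$-freeness verified through the unique new triangle $u_0u_iw_j$. If anything, your freeness check is slightly more complete than the paper's: you explicitly rule out the spine $u_0w_j$ using $a\le r$ from Lemma \ref{cla2}, a case the paper's proof (which only addresses the spine $u_iu_0$) leaves implicit.
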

\begin{proof}
Since $G^{*}$ is $B_{r+1}$-free, we have $d_{N_{B}(u_0)}(u_i)\leq r-1$ for $1\leq i\leq a$. Furthermore, by the maximality of $\rho(G^{*})$, we have $d_{N_{B}(u_0)}(u_i)=r-1$ for $1\leq i\leq a$.
Now we prove that $N_{N_{B}(u_0)}(u_i)=B_0$ for $1\leq i\leq a$. If $N_{N_{B}(u_0)}(u_i)\neq B_0$ for some $i$, then there exists a vertex $w\in N_{B}(u_0)\backslash B_0$ such that $u_iw\in E(G^{*})$. Since $d_{N_{B}(u_0)}(u_i)=r-1$, there must exist some vertex $w_j$ such that $u_iw_j\notin E(G^{*})$, where $1\leq j\leq r-1$. Let $G'=G^{*}-u_iw+u_iw_j$.
We claim that $G'$ is still $B_{r+1}$-free. In fact, if $G'$ contains a $B_{r+1}$, then $u_iw_j\in E(B_{r+1}).$ Note that $N_{G'}(u_i)\cap N_{G'}(w_j)=\{u_0\}.$ Then $|N_{G'}(u_i)\cap N_{G'}(u_0)|\geq r+1$, which contradicts that $|N_{G'}(u_i)\cap N_{G'}(u_0)|=r$. Note that $G'$ is still non-bipartite. By Lemma \ref{lem2}, we have $\rho(G')>\rho(G^{*})$, a contradiction.
\end{proof}

For convenience, we write $B_1=N_{B}(u_0)\backslash B_0$ and $B_2=B\backslash(B_0\cup B_1)$. Then $|B_1|=b_1$. Let $|B_2|=b_2$. By the maximality of $\rho(G^{*})$ again, we can obtain that $N_{B}(u_i)=B_0\cup B_2$ for $1\leq i\leq a$ and $N_{B}(u)=B$ for any vertex $u\in A\backslash\{u_0,u_1,\ldots,u_a\}$. Combining Claim \ref{claf}, we determine the structure of $G^{*}$ as Fig. \ref{f6}.

\begin{claim}\label{clag}
$b_1=0$.
\end{claim}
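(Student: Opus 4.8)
The plan is to prove $b_1=0$ by contradiction: assuming $b_1\ge 1$, I will produce a local edge re-routing that keeps $G^*$ both non-bipartite and $B_{r+1}$-free while strictly raising $\rho$, contradicting the maximality of $\rho(G^*)$. The natural forward move is to take each $w\in B_1$ (adjacent to $u_0$ and to all of $A'=A\setminus\{u_0,u_1,\dots,u_a\}$, but to no leaf) and re-route its special edge: delete $wu_0$ and instead join $w$ to every leaf $u_1,\dots,u_a$, producing $G_f$. First I would verify that $G_f$ is still $B_{r+1}$-free. After the move the neighbourhood of $w$ inside $A\cup\{u^*\}$ is $\{u_1,\dots,u_a\}\cup A'$, which is disjoint from $N(u_i)\cap(A\cup\{u^*\})=\{u^*,u_0\}$; hence $w$ and any leaf $u_i$ have no common neighbour, so no book can use $wu_i$ on its spine or as a page. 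The only spine whose common-neighbourhood could grow is $u_0u_i$, but since $w$ is no longer adjacent to $u_0$, the common neighbourhood of $u_0$ and $u_i$ stays $\{u^*\}\cup B_0$ of size $r$ (using Claim \ref{claf}). Non-bipartiteness is immediate because the triangle $u^*u_0u_i$ is untouched.

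Next I would compute the Rayleigh change with the Perron vector $\mathbf{x}$ of $G^*$, namely $\mathbf{x}^{T}A(G_f)\mathbf{x}-\mathbf{x}^{T}A(G^*)\mathbf{x}=2\big(\sum_{w\in B_1}x_w\big)\big(\sum_{i=1}^{a}x_{u_i}-x_{u_0}\big)$. If $\sum_i x_{u_i}>x_{u_0}$, this is positive, so $\rho(G_f)>\rho$, a contradiction. For the borderline case $\sum_i x_{u_i}=x_{u_0}$, the change is $0$, so only $\rho(G_f)\ge\rho$ follows from the Rayleigh quotient; but $\mathbf{x}$ violates the eigen-equation of $G_f$ at a leaf $u_i$, whose value increases by $\sum_{w\in B_1}x_w>0$, so $\mathbf{x}$ is not a Perron vector of $G_f$ and hence $\rho(G_f)>\rho$, again a contradiction. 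Therefore we must have $\sum_{i=1}^{a}x_{u_i}<x_{u_0}$.

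Now I would invoke the reverse move. Assuming $\sum_i x_{u_i}<x_{u_0}$ and $B_2\neq\varnothing$, pick $w'\in B_2$, delete its $a$ leaf-edges and add $w'u_0$; the same common-neighbour bookkeeping shows the result stays $B_{r+1}$-free and non-bipartite, while its Rayleigh change is $2x_{w'}\big(x_{u_0}-\sum_i x_{u_i}\big)>0$, contradicting maximality. Hence $b_2=0$. It remains to kill this degenerate regime directly. With $b_2=0$ we have $B=B_0\cup B_1$, each $B_1$ vertex misses exactly the $a$ leaves, so $\overline{e}(A,B)=ab_1$ and $b_1=|B|-(r-1)$. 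Feeding $\sum_{u\in A}d_A(u)x_u\le 2e(A)x_{u^*}=2a\,x_{u^*}$ (Lemma \ref{cla2}) and $\sum_{w\in B}d_A(w)x_w\le(|A||B|-\overline{e}(A,B))x_{u^*}$ into $\rho^2 x_{u^*}=|A|x_{u^*}+\sum_{u\in A}d_A(u)x_u+\sum_{w\in B}d_A(w)x_w$ yields $\rho^2\le|A||B|+|A|+2a-ab_1$. Writing $|A|=m$, $|B|=n-1-m$, $b_1=n-m-r$, the right-hand side becomes the downward parabola $g(m)=-m^2+(n+a)m+a(r+2)-an$, whose global maximum $g\!\left(\tfrac{n+a}{2}\right)$ falls below $\big\lfloor (n-1)^2/4\big\rfloor$ once $n\ge 8(r^2+r+4)$ (the deficit is of order $n/2$ against an error of order $r^2$). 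Thus $\rho^2<\big\lfloor(n-1)^2/4\big\rfloor$, contradicting (\ref{eq2}), and the assumption $b_1\ge 1$ is untenable.

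I expect the main obstacle to be that the sign of $\sum_i x_{u_i}-x_{u_0}$ is \emph{not} determined a priori: there is no local book constraint bounding $b_1=|B_1|$, so one cannot simply argue $\sum_i x_{u_i}>x_{u_0}$ and apply a single swap. This forces the two-directional swap (forward when $x_{u_0}$ is small, reverse when it is large) and, crucially, isolates the genuinely different regime $b_2=0$, in which neither local swap is available and the contradiction must instead come from a global spectral estimate via the parabola $g(m)$ and the lower bound (\ref{eq2}). Getting the constant in $g$ to clear $\big\lfloor(n-1)^2/4\big\rfloor$ uniformly over $m\in[\lceil n/2\rceil,\,n-r-1]$ and $2\le a\le r$ is the delicate quantitative step.
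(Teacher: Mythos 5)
Your proof is correct, and it reaches the contradiction by a genuinely different route than the paper. The paper first bounds $b_1\leq r+3$ by feeding $\rho^{2}x_{u^{*}}\leq\big(|A|+2e(A)+e(A,B)\big)x_{u^{*}}$ into (\ref{eq2}), then computes eigenvector entries explicitly ($x_{w}=x_{u^{*}}$ on $B_0$, $\rho x_{w}=\rho x_{u^{*}}-x_{u_0}$ on $B_2$, inequalities (\ref{eq12})--(\ref{eq13})) to establish the sign condition $ax_{u_1}>x_{u_0}$ unconditionally (its (\ref{eq14}), which needs $b_1\leq r+3$ and $|B|+1\geq 5r+3$, i.e.\ $b_2$ large), and then performs exactly your forward swap \emph{once}, concluding by the Rayleigh quotient that $\rho(G')>\rho(G^{*})$ for $G'\cong K_{|A|-1,|B|+1}^{a,r}$. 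You instead leave the sign of $\sum_{i}x_{u_i}-x_{u_0}$ undetermined and run a dichotomy: the forward swap (your verification of $B_{r+1}$-freeness and non-bipartiteness is sound, and your treatment of the borderline zero Rayleigh change via failure of the eigen-equation at a leaf is a valid strictness argument the paper never needs) rules out $\sum_{i}x_{u_i}\geq x_{u_0}$; the reverse swap rules out $b_2\geq1$ under the opposite sign; and the leftover regime $b_2=0$ falls to the parabola estimate $\rho^{2}\leq -m^{2}+(n+a)m+a(r+2)-an<\big\lfloor\frac{(n-1)^{2}}{4}\big\rfloor$, which indeed clears since $(a-1)n/2>a^{2}/4+ar+2a$ holds comfortably for $2\leq a\leq r$ and $n\geq 8(r^{2}+r+4)$. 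Your approach buys freedom from the entry estimates and from the bound $b_1\leq r+3$; the paper's buys a single swap with no degenerate case --- and your finding that the bad sign forces $b_2=0$ mirrors the paper's mechanism, where a large $b_2$ is precisely what forces $ax_{u_1}>x_{u_0}$. One small point of rigor: in the forward move, besides the spine $u_0u_i$, one should note that all newly created common neighbours (a $B_1$-vertex for a pair of leaves, or a leaf for a pair in $B_1\times(B\cup\{u^{*},u_0\})$) occur on \emph{non-adjacent} pairs, so no existing edge gains pages; this is routine given the structure in Fig.~\ref{f6} but should be stated.
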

\begin{proof}
Suppose to the contrary that $b_1\geq1$. Then $\overline{d}_{B}(u_i)=b_1$ for $1\leq i\leq a$. Note that $\overline{d}_{B}(u_0)=|B|-r+1-b_1$. Then
\begin{eqnarray*}
\rho^{2}x_{u^{*}}&=&|A|x_{u^{*}}+\sum_{u\in A}d_{A}(u)x_{u}+\sum_{w\in B}d_{A}(w)x_{w}\\
&\leq&|A|x_{u^{*}}+2e(A)x_{u^{*}}+e(A, B)x_{u^{*}}\\
&=&|A|x_{u^{*}}+2ax_{u^{*}}+\big(|A||B|-|B|+r-1+b_1-ab_1\big)x_{u^{*}}\\
&=&\Big((|A|-1)(|B|+1)+2a+r-(a-1)b_1\Big)x_{u^{*}}.
\end{eqnarray*}
\begin{figure}
\centering
\includegraphics[width=0.65\textwidth]{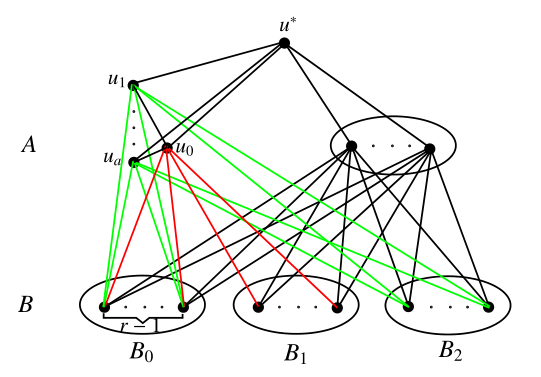}
\caption{The graph $G^{*}$ with $e(A)\geq2$.}
\label{f6}
\end{figure}
Recall that $|A|+|B|=n-1$ and $a\geq2$. Combining (\ref{eq2}), we have
$$\bigg\lfloor\frac{(n-1)^{2}}{4}\bigg\rfloor<\rho^{2}\leq\bigg\lfloor\frac{(n-1)^{2}}{4}\bigg\rfloor+2a+r-(a-1)b_1.$$
It follows that $b_1<\frac{2a+r}{a-1}=2+\frac{r+2}{a-1}\leq r+4$, that is, $b_1\leq r+3$.

By symmetry, we have $x_{u_i}=x_{u_1}$ for $2\leq i\leq a$. Note that $x_w=x_{u^{*}}$ for any vertex $w\in B_0$.
For each vertex $w\in B_2$, we have $\rho x_w=\rho x_{u^{*}}-x_{u_0}$. Recall that $|B_0|=r-1$.
Note that $|A|+|B|+1=n$. By Lemma \ref{cla1}, we have $|B|+1\leq\big\lfloor\frac{n}{2}\big\rfloor$. Note that $b_1\geq1$. Combining (\ref{eq1}), we can obtain that $b_2\leq|B|-1\leq\big\lfloor\frac{n}{2}\big\rfloor-2\leq\frac{n-4}{2}<\rho$. Then
\begin{eqnarray}\label{eq12}
\rho x_{u_1}&=&x_{u^{*}}+x_{u_0}+\sum_{w\in B_0}x_{w}+\sum_{w\in B_2}x_{w}\nonumber\\
&=&rx_{u^{*}}+x_{u_0}+b_2\bigg(x_{u^{*}}-\frac{x_{u_0}}{\rho}\bigg)\nonumber\\
&=&(r+b_2)x_{u^{*}}+\bigg(1-\frac{b_2}{\rho}\bigg)x_{u_0}\nonumber\\
&>&(r+b_2)x_{u^{*}}.
\end{eqnarray}
Moreover, we have
\begin{eqnarray}\label{eq13}
\rho x_{u_0}=x_{u^{*}}+ax_{u_1}+\sum_{w\in B_0\cup B_1}x_{w}\leq\big(r+a+b_1\big)x_{u^{*}}.
\end{eqnarray}
Recall that $a\geq2$ and $b_1\leq r+3$.
Therefore, combining (\ref{eq12}) and (\ref{eq13}), we obtain that
\begin{eqnarray*}
\rho(ax_{u_1}-x_{u_0})&>&a(r+b_2)x_{u^{*}}-(r+a+b_1)x_{u^{*}}=\Big((a-1)r+(b_2-1)a-b_1\Big)x_{u^{*}}\\
&\geq&\Big((r+2(b_2-1)-b_1\Big)x_{u^{*}}=\Big(r+2(|B|-r-b_1)-b_1\Big)x_{u^{*}}\\
&=&\big(2|B|-r-3b_1\big)x_{u^{*}}\geq\big(2|B|-4r-9\big)x_{u^{*}}\\
&=&\Big(2(|B|+1)-(4r+11)\Big)x_{u^{*}}.
\end{eqnarray*}
By Lemma \ref{cla3}, $|B|+1\geq5r+3$. Note that $r\geq2$. Hence we have
\begin{eqnarray}\label{eq14}
\rho(ax_{u_1}-x_{u_0})>(6r-5)x_{u^{*}}>0.
\end{eqnarray}
Let $B_1=\{w_j|1\leq j\leq b_1\}$.
Now we construct a new graph $G'=G^{*}-\{u_0w_j|1\leq j\leq b_1\}+\{u_iw_j|1\leq i\leq a, 1\leq j\leq b_1\}$.
Then $G'\cong K_{|A|-1,|B|+1}^{a,r}$ and $K_{|A|-1,|B|+1}^{a,r}$ is still a non-bipartite $B_{r+1}$-free graph. By symmetry, we have $x_{w_1}=x_{w_2}=\cdots=x_{w_{b_1}}$. Combining (\ref{eq14}), we have
\begin{eqnarray*}
\rho(G')-\rho(G^{*})&\geq& \textbf{x}^{T}A(G')\textbf{x}-\textbf{x}^{T}A(G^{*})\textbf{x}\\
&=&\sum\limits_{1\leq i\leq a, 1\leq j\leq b_1}2x_{w_j}x_{u_i}-\sum\limits_{1\leq j\leq b_1}2x_{w_j}x_{u_0}\\
&=&2ab_1x_{w_1}x_{u_1}-2b_1x_{w_1}x_{u_0}\\
&=&2b_1x_{w_1}\big(ax_{u_1}-x_{u_0}\big)\\
&>&0,
\end{eqnarray*}
which contradicts the maximality of $\rho(G^{*})$.
Hence we have $b_1=0$.
\end{proof}

By Claim \ref{clag}, then $d_{B}(u_0)=b=r-1$. Furthermore, by the maximality of $\rho(G^{*})$, we have $d_{A}(u_0)=a=r$.
Now we have $G^{*}-\{u_0\}\cong K_{|A|-1,|B|+1}$ and $d_{A\backslash\{u_0\}}(u_0)=d_{B\cup\{u^{*}\}}(u_0)=r$.
This implies that $G^{*}\cong K_{|A|-1,|B|+1}^{r,r}$. By Lemma \ref{lem6}, we have $G^{*}\cong K_{\lfloor\frac{n-1}{2}\rfloor,\lceil\frac{n-1}{2}\rceil}^{r,r}$.

\vspace{1.8mm}
\noindent{\bf Case 2.} $G^{*}[A]$ is a triangle with possibly some isolated vertices.
\vspace{1.8mm}

Let $u_0u_1u_2u_0$ be the triangle in $G^{*}[A]$. Note that $d_{A}(u_0)=d_{A}(u_1)=d_{A}(u_2)=2$. Without loss of generality, we assume that $d_{B}(u_0)=b$. Then we have $b\geq r-2$. Otherwise, we can obtain a non-bipartite $B_{r+1}$-free graph $G'$ with $\rho(G')>\rho(G^{*})$ by adding $r-2-b$ edges between $u_0$ and $B\backslash N_{B}(u_0)$, a contradiction. Let $b=r-2+b_1$, where $b_1\geq0$. Note that $e(A)=3$. By using the eigen-equation, we have
\begin{eqnarray}\label{eq10.1}
\rho^{2}x_{u^{*}}&=&|A|x_{u^{*}}+\sum_{u\in A}d_{A}(u)x_{u}+\sum_{w\in B}d_{A}(w)x_{w}\nonumber\\
&\leq&|A|x_{u^{*}}+2e(A)x_{u^{*}}+e(A, B)x_{u^{*}}\nonumber\\
&=&|A|x_{u^{*}}+6x_{u^{*}}+\bigg(|A||B|-\sum_{u\in A}\overline{d}_{B}(u)\bigg)x_{u^{*}}.
\end{eqnarray}
We first prove the following claim.
\begin{claim}\label{clac.0}
$\sum_{u\in A}\overline{d}_{B}(u)<|B|+7.$
\end{claim}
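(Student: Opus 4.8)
The plan is to argue by contradiction, turning the desired counting bound directly into a violation of the spectral lower bound (\ref{eq2}). Suppose instead that $\sum_{u\in A}\overline{d}_{B}(u)\geq|B|+7$. The key observation is that the right-hand side of the already-established inequality (\ref{eq10.1}) is monotone decreasing in $\sum_{u\in A}\overline{d}_{B}(u)$, so this assumption immediately produces an upper bound on $\rho^{2}$.

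First I would substitute the assumed bound into (\ref{eq10.1}) and simplify. Since we are in the case $e(A)=3$, the constant contribution is $2e(A)x_{u^{*}}=6x_{u^{*}}$, so the coefficient of $x_{u^{*}}$ becomes
$$|A|+6+|A||B|-(|B|+7)=|A||B|+|A|-|B|-1=(|A|-1)(|B|+1).$$
Dividing through by the positive Perron entry $x_{u^{*}}$ then leaves $\rho^{2}\leq(|A|-1)(|B|+1)$.

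Next I would exploit that $(|A|-1)+(|B|+1)=|A|+|B|=n-1$ is fixed. Since the product of two nonnegative integers with fixed sum $n-1$ is maximized when the factors are as equal as possible, we get $(|A|-1)(|B|+1)\leq\big\lfloor\frac{(n-1)^{2}}{4}\big\rfloor$. Combining this with the previous step yields $\rho^{2}\leq\big\lfloor\frac{(n-1)^{2}}{4}\big\rfloor$, which directly contradicts (\ref{eq2}). Hence $\sum_{u\in A}\overline{d}_{B}(u)<|B|+7$, as claimed.

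I do not anticipate any genuine obstacle here: the whole argument is a one-line algebraic rearrangement of (\ref{eq10.1}) followed by the elementary fixed-sum product bound, and it does not even use the detailed triangle structure beyond $e(A)=3$. The only point demanding care is the exact factorization into $(|A|-1)(|B|+1)$ and checking that the ``$+7$'' threshold in the claim is precisely calibrated to the ``$+6$'' contributed by $e(A)=3$ together with the ``$-1$'' absorbed into the regrouping $(|A|-1)(|B|+1)$.
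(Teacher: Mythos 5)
Your proposal is correct and follows essentially the same route as the paper: assume $\sum_{u\in A}\overline{d}_{B}(u)\geq|B|+7$, substitute into (\ref{eq10.1}), factor the resulting coefficient as $(|A|-1)(|B|+1)$, bound that product by $\big\lfloor\frac{(n-1)^{2}}{4}\big\rfloor$ using the fixed sum $(|A|-1)+(|B|+1)=n-1$, and contradict (\ref{eq2}). The algebraic identity $|A|+6+|A||B|-(|B|+7)=(|A|-1)(|B|+1)$ checks out, so there is nothing to add.
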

\begin{proof}
Suppose that $\sum_{u\in A}\overline{d}_{B}(u)\geq|B|+7.$ Recall that $|A|+|B|=n-1$. Combining (\ref{eq10.1}), we obtain that
\begin{eqnarray*}
\rho^{2}x_{u^{*}}&\leq&|A|x_{u^{*}}+6x_{u^{*}}+\Big(|A||B|-(|B|+7)\Big)x_{u^{*}}\\
&=&(|A|-1)(|B|+1)x_{u^{*}}\\
&\leq&\bigg\lfloor\frac{(n-1)^{2}}{4}\bigg\rfloor x_{u^{*}},
\end{eqnarray*}
which contradicts (\ref{eq2}). Hence $\sum_{u\in A}\overline{d}_{B}(u)<|B|+7.$
\end{proof}

Next we consider the following two cases according to different values of $b_1$.

\vspace{1.8mm}
\noindent{\bf Case 2.1.} $b_1\geq r+5$.
\vspace{1.8mm}

Since $G^{*}$ is $B_{r+1}$-free, we have $\overline{d}_{B}(u_i)\geq b_1$ for $1\leq i\leq 2$. Hence $\sum_{i=1}^{2}\overline{d}_{B}(u_i)\geq 2b_1$. Note that $\overline{d}_{B}(u_0)=|B|+2-r-b_1$. Then
\begin{eqnarray*}
\sum_{u\in A}\overline{d}_{B}(u)&\geq&\sum_{i=0}^{2}\overline{d}_{B}(u_i)
\geq|B|+2+b_1-r
\geq|B|+7,
\end{eqnarray*}
which contradicts Claim \ref{clac.0}.

\vspace{1.8mm}
\noindent{\bf Case 2.2.} $0\leq b_1\leq r+4$.
\vspace{1.8mm}

Note that $d_{B}(u_0)=r-2+b_1\leq 2r+2$. Then $\overline{d}_{B}(u_0)\geq|B|-2r-2$. Let $B_0=N_{B}(u_0)$.  By the maximality of $\rho(G^{\ast})$, we have $B\backslash B_0\subseteq N_{B}(u_1)\cup N_{B}(u_2)$. Denote by $B_1=N_{B\backslash B_0}(u_1)\cap N_{B\backslash B_0}(u_2)$. Since $G^{\ast}$ is $B_{r+1}$-free, $|B_1|\leq r-2$. Without loss of generality, we assume that $x_{u_1}\geq x_{u_2}$. Then we have $B\backslash(B_0\cup B_1)\subseteq N_{B}(u_1)$. Otherwise, there exists a vertex $w\in B\backslash(B_0\cup B_1)$ such that $w\in N_{B}(u_2)\backslash N_{B}(u_1)$. Then $G'=G^{\ast}-wu_2+wu_1$ is a non-bipartite $B_{r+1}$-free graph with $\rho(G')>\rho(G^{*})$, a contradiction. Therefore, $B\backslash B_0\subseteq N_{B}(u_1)$. Note that $G^{\ast}$ is $B_{r+1}$-free. Then $|N_{B_0}(u_2)|\leq r-2$ and $|N_{B\backslash B_0}(u_2)|\leq r-2$, and hence $\overline{d}_{B}(u_2)\geq|B|-2r+4$. Combining Lemma \ref{cla3}, we have
\begin{eqnarray*}
\sum_{u\in A}\overline{d}_{B}(u)&\geq&\overline{d}_{B}(u_0)+\overline{d}_{B}(u_2)
\geq2|B|-4r+2\\
&=&|B|+(|B|+1-(4r-1))
\geq|B|+9,
\end{eqnarray*}
which contradicts Claim \ref{clac.0}.
\hspace*{\fill}$\Box$

\section{Concluding remarks}\label{se4}
In this section, we focus on the Brualdi-Hoffman-Tur\'{a}n Problem on books, i.e., determining the maximum spectral radius of $B_{r+1}$-free graphs on $m$ edges. Since isolated vertices do not have an effect on the spectral radius, we here consider graphs without
isolated vertices. In 2021, Zhai, Lin and Shu \cite{Zhai2021} posed the following conjecture on books.
\begin{conj}
Let $G$ be a $B_{r+1}$-free graph on $m$ edges and $m\geq m_{0}(r)$ for large enough $m_{0}(r)$. Then
$$\rho(G)\leq\sqrt{m},$$
with equality if and only if $G$ is a complete bipartite graph.
\end{conj}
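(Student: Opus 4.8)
The plan is to run an edge-extremal spectral argument: fix a $B_{r+1}$-free graph $G$ with $m$ edges and no isolated vertices maximizing $\rho$, convert the size constraint into an exact walk-counting identity rooted at the vertex of maximum Perron weight, and then use the book-free codegree bound together with the residual index to force $\rho^{2}\le m$. As in the opening of Section~\ref{se3}, a cut-edge argument lets me assume $G$ is connected. Let $\mathbf{x}=(x_v)$ be the Perron vector, choose $u^{*}$ with $x_{u^{*}}=\max_v x_v$, and set $A=N(u^{*})$ and $B=V(G)\setminus(\{u^{*}\}\cup A)$, mirroring the notation of Theorem~\ref{main}. I would argue by contradiction: assume $\rho^{2}\ge m$ and aim to show this forces $G\cong K_{a,b}$ with $ab=m$, so that $\rho^{2}>m$ is impossible.

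The heart of the proof is an identity obtained from the two-step eigen-equation. Writing $\rho x_v=\sum_{w\sim v}x_w$ for each $v\in A$, expanding $x_w=x_{u^{*}}-(x_{u^{*}}-x_w)$, and using the elementary count $\sum_{v\in A}d(v)=m+e(A)-e(B)$, I get
\[
\rho^{2}=m+e(A)-e(B)-\frac{1}{x_{u^{*}}}\left(\sum_{u_1u_2\in E(A)}\big(2x_{u^{*}}-x_{u_1}-x_{u_2}\big)+\sum_{w\in B}\Gamma_{w}\right),
\]
where $\Gamma_{w}=d_{A}(w)(x_{u^{*}}-x_w)$ is the residual index. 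Thus $\rho^{2}\le m$ is equivalent to showing that the bracketed deficiency is at least $(e(A)-e(B))\,x_{u^{*}}$, and strictly larger unless $G$ is complete bipartite. The book-free hypothesis enters exactly here: since every edge lies in at most $r$ triangles, $|N(u^{*})\cap N(v)|\le r$ for all $v\in A$, so $\Delta(G[A])\le r$, and Chv\'atal--Hanson (Lemma~\ref{lem1}) with \eqref{e0} yield $e(A)\le\nu(G[A])(r+1)$. Hence the only positive contribution to $\rho^{2}-m$ is the matching-bounded quantity $e(A)$, and each edge $u_1u_2\in E(A)$ inflating it is discounted by $2x_{u^{*}}-x_{u_1}-x_{u_2}$.

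The mechanism that should supply the remaining compensation is the residual index, driven by the assumption $\rho^{2}\ge m$. For any $w\in B$ one has $\rho x_w=\sum_{u\sim w}x_u\le d(w)x_{u^{*}}$, so $x_w\le \frac{d(w)}{\rho}x_{u^{*}}$; when $w$ has degree small compared with $\rho\approx\sqrt{m}$, this makes $x_{u^{*}}-x_w$, and hence $\Gamma_w$, large. I would combine this with the edge-rotation lemma (Lemma~\ref{lem2}) to push $G$ toward a canonical form in which $B$ is independent and neighbourhoods nest---precisely the reductions carried out in Theorem~\ref{cla11} and the proof of Theorem~\ref{main}---so that the weight drop forced in $B$ by any triangle through $u^{*}$ can be summed into $\sum_{w\in B}\Gamma_w$ and balanced against $e(A)\,x_{u^{*}}$.

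The main obstacle is the sign control of the deficiency when $A$ contains adjacent vertices of large Perron weight. Unlike the vertex-Tur\'an setting of Theorem~\ref{main}, here $n$ is unconstrained relative to $m$, so the crude bound $\rho^{2}\le\sum_{v\in A}d(v)$ only gives $\rho\le\sqrt{2m}$; pushing down to the tight $\sqrt{m}$ requires proving that the residual mass at distance two from $u^{*}$, together with $e(B)$, pays for every edge of $E(A)$. To control the uncompensated part I would (i) use $e(A)\le\nu(G[A])(r+1)$ to bound it additively by $O(r)$ per matching edge, (ii) invoke the threshold $m\ge m_{0}(r)$ to dominate this bounded error by the eigenvector gap accumulated over $B$, and (iii) settle the equality case as in Lemma~\ref{lem6} by checking that zero deficiency forces $e(A)=e(B)=0$ and equal within-part entries, i.e.\ $G\cong K_{a,b}$ with $ab=m$. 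Quantifying the coupling in (ii)---that a genuinely non-bipartite book-free graph must drop enough weight in $B$---is where I expect the real difficulty, and likely the place where a sharper structural analysis beyond the present lemmas is needed.
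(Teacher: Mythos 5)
First, a point of calibration: the paper does not prove this statement at all. It is the conjecture of Zhai, Lin and Shu, recorded in the concluding remarks (Section~\ref{se4}) only to motivate a new conjecture, and the paper immediately notes it was confirmed by Nikiforov, whose theorem (quoted right after it) shows that any graph with $m$ edges and $\rho(G)\geq\sqrt{m}$ has booksize $bk(G)>\frac{1}{12}\sqrt[4]{m}$ unless it is complete bipartite; taking $m_{0}(r)$ of order $(12r)^{4}$ then settles the conjecture for each fixed $r$. So your attempt must be measured against that route, not against anything in Section~\ref{se3}. Your opening identity is correct, and it is indeed the standard launch point for Nosal-type results: with $A=N(u^{*})$ one gets $\rho^{2}=m+e(A)-e(B)-\frac{1}{x_{u^{*}}}\big(\sum_{u_{1}u_{2}\in E(A)}(2x_{u^{*}}-x_{u_{1}}-x_{u_{2}})+\sum_{w\in B}\Gamma_{w}\big)$, and for $r=0$ (where $e(A)=0$) it instantly yields $\rho\leq\sqrt{m}$. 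The problem is everything after that.

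The genuine gap is the one you flag yourself in step (ii), and it is not a technicality: you give no argument that the deficiency together with $e(B)x_{u^{*}}$ dominates $e(A)x_{u^{*}}$, and the lemmas of this paper that you propose to import cannot supply it, because they are all anchored to the vertex-Tur\'an normalization $\rho\approx n/2$. Lemma~\ref{cla1} ($|A|\geq\lceil n/2\rceil$), Lemma~\ref{cla3} ($|B|+1\geq n/4$), and every contradiction in Lemmas~\ref{cla6}--\ref{cla9} and Theorem~\ref{cla11} is a comparison against the benchmark $\lfloor(n-1)^{2}/4\rfloor$ from (\ref{eq2}); with $m$ fixed, $n$ is unconstrained, $|A|$ can be as small as order $\sqrt{m}$, and $\rho^{2}\ll n^{2}/4$, so none of that machinery even starts. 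Moreover $\Gamma_{w}=d_{A}(w)(x_{u^{*}}-x_{w})$ vanishes on every $w\in B$ with $d_{A}(w)=0$, so in sparse graphs there is no residual mass over $B$ to accumulate, whereas $e(A)$ can be as large as roughly $r|A|/2\sim r\sqrt{m}/2$ (the book-free condition only forces $\Delta(G[A])\leq r$, and $\nu(G[A])$ can be $|A|/2$); beating an error of that size with bounded per-matching-edge discounts requires compensation growing like a power of $m$, which is precisely the quantitative content of Nikiforov's booksize bound $bk(G)>\frac{1}{12}\sqrt[4]{m}$ --- the idea missing from your plan, and the reason its resolution needed a separate paper rather than the threshold ``$m\geq m_{0}(r)$ dominates an $O(r)$ error'' you invoke. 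Two smaller slips: the conjecture has no non-bipartiteness hypothesis, so preserving non-bipartiteness under Lemma~\ref{lem2} rotations is beside the point (the extremal graphs are bipartite); and the equality case must admit arbitrarily unbalanced $K_{a,b}$ with $ab=m$ (plus isolated vertices), so the balancing argument of Lemma~\ref{lem6}, which compares graphs at fixed $n$, is not the right closing step either.
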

Let $bk(G)$ stand for the booksize of $G$, that is, the maximum number of triangles with a common edge in $G$.
Recently, Nikiforov \cite{Nikiforov2021} confirmed the above conjecture.
\begin{theorem}
If $G$ is a graph with $m$ edges and $\rho(G)\geq\sqrt{m}$, then
$$bk(G)>\frac{1}{12}\sqrt[4]{m},$$
unless $G$ is a complete bipartite graph.
\end{theorem}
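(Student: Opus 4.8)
The plan is to convert the hypothesis $\rho(G)\ge\sqrt{m}$ into an inequality among the spectral moments of $A(G)$ and then read off the booksize from a triangle count. Write the eigenvalues of $A(G)$ as $\lambda_1\ge\lambda_2\ge\cdots\ge\lambda_n$, so that $\rho=\lambda_1$, and recall the power-sum identities $\sum_i\lambda_i=0$, $\sum_i\lambda_i^2=\operatorname{tr}(A^2)=2m$ and $\sum_i\lambda_i^3=\operatorname{tr}(A^3)=6t$, where $t$ is the number of triangles of $G$. Since isolated vertices and non-extremal components affect neither $\rho$ nor the relevant inequalities, I may assume $G$ is connected. The useful reformulation is that $\rho^2\ge m$ is exactly
\[
\lambda_1^2\ge\sum_{i=2}^{n}\lambda_i^2 ,
\]
because $\sum_i\lambda_i^2=2m$.

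First I would settle the bipartite case, which produces precisely the exceptional graphs. If $G$ is bipartite then its spectrum is symmetric, so $\lambda_n=-\lambda_1$ and hence $\sum_{i\ge2}\lambda_i^2\ge\lambda_n^2=\lambda_1^2$. Together with the displayed inequality this forces $\lambda_2=\cdots=\lambda_{n-1}=0$, i.e. $\operatorname{rank}A(G)=2$, which for a connected graph means $G\cong K_{a,b}$. Thus every bipartite graph meeting $\rho\ge\sqrt m$ is complete bipartite (and has $bk=0$), matching the stated exception; from now on I assume $G$ is non-bipartite and aim to bound $bk(G)$ from below.

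Let $\beta=bk(G)$ and $\mu=\max_{2\le i\le n}|\lambda_i|=\max\{\lambda_2,-\lambda_n\}$. Since every triangle uses three edges and every edge lies in at most $\beta$ triangles, $3t\le\beta m\le\beta\lambda_1^2$. On the other hand, using $\sum_{i\ge2}\lambda_i^3\ge-\mu\sum_{i\ge2}\lambda_i^2\ge-\mu\lambda_1^2$ (the last step by the displayed inequality), I get
\[
6t=\lambda_1^3+\sum_{i=2}^{n}\lambda_i^3\ge\lambda_1^2(\lambda_1-\mu).
\]
Combining the two bounds yields $\lambda_1-\mu\le 2\beta$, that is $\beta\ge\tfrac12(\lambda_1-\mu)\ge\tfrac12(\sqrt m-\mu)$. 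Hence whenever the modulus gap satisfies $\mu\le\sqrt m-\tfrac16 m^{1/4}$ we already obtain $\beta\ge\tfrac1{12}m^{1/4}$. Moreover, if the extremal modulus is attained by a positive eigenvalue, $\mu=\lambda_2$ close to $\lambda_1$, then the displayed inequality forces $\sum_{i\ge3}\lambda_i^2$ to be negligible, so $6t\approx\lambda_1^3+\lambda_2^3$ is of order $m^{3/2}$ and $\beta\ge 3t/m$ is of order $m^{1/2}$, far exceeding the target.

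Consequently the whole difficulty is concentrated in the near-bipartite regime $\mu=-\lambda_n$ with $-\lambda_n$ within $\tfrac16 m^{1/4}$ of $\lambda_1$. This is the main obstacle: here the crude cancellation $\lambda_n^3\approx-\lambda_1^3$ makes the triangle lower bound above vacuous, and the displayed inequality forces $\sum_{i\ge3}\lambda_i^2\approx0$, so the spectrum is $\{\lambda_1,0,\dots,0,\lambda_n\}$ to high accuracy — i.e. $G$ is spectrally a small perturbation of a complete bipartite graph. To finish I would run a stability argument: quantify how close to $K_{a,b}$ the graph must be, locate the few edges lying inside one side of the near-bipartition, and exploit that the non-bipartiteness of $G$ forces at least one such edge into many triangles. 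Controlling this perturbation — most naturally through the fourth moment $\operatorname{tr}(A^4)=\sum_i\lambda_i^4$, which counts closed $4$-walks and pins down the deviation of the degree sequence and of $\sum_{u\ne v}c_{uv}^2$ (with $c_{uv}$ the number of common neighbours of $u$ and $v$) from their complete bipartite values — is exactly where the exponent $\tfrac14$ and the constant $\tfrac1{12}$ are forced, and it is the technically heaviest part of the proof.
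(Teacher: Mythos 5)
This statement is not actually proved in the paper: it is Nikiforov's theorem, quoted in the concluding remarks from reference [Nikiforov2021] (arXiv:2104.12171), so the only ``paper proof'' is by citation, and your attempt can only be judged on its own merits. On those merits, the preliminary steps are correct: $\rho^{2}\geq m$ is indeed equivalent to $\lambda_{1}^{2}\geq\sum_{i\geq2}\lambda_{i}^{2}$ since $\operatorname{tr}(A^{2})=2m$; for bipartite $G$ the symmetric spectrum then forces all eigenvalues other than $\pm\lambda_{1}$ to vanish, hence $\operatorname{rank}A=2$ and $G\cong K_{a,b}$; and the moment estimate $6t\geq\lambda_{1}^{2}(\lambda_{1}-\mu)$ combined with $3t\leq bk(G)\,m\leq bk(G)\,\lambda_{1}^{2}$ correctly yields $bk(G)\geq\tfrac12(\lambda_{1}-\mu)$, which disposes of the range $\mu\leq\sqrt m-\tfrac16 m^{1/4}$, and the subcase $\mu=\lambda_{2}$ can be finished along your lines via $6t\geq\lambda_{2}^{2}(\lambda_{1}+\lambda_{2})$.

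However, there is a genuine gap exactly where the theorem lives: the near-bipartite case $\mu=-\lambda_{n}>\sqrt m-\tfrac16 m^{1/4}$, in which your cubic-moment bound degenerates. For this case you offer only a program --- ``run a stability argument,'' ``control the perturbation through $\operatorname{tr}(A^{4})$'' --- with no lemma, no inequality, and no tracking of how the exponent $\tfrac14$ and the constant $\tfrac1{12}$ emerge; you yourself label it the technically heaviest part, and it is: this regime is the substance of Nikiforov's proof. Moreover, the guiding heuristic that ``non-bipartiteness forces an edge inside one side of the near-bipartition into many triangles'' is false as a purely structural statement: $SK_{a,b}$ (featured in Theorem 1.2 of this very paper) is non-bipartite, spectrally an arbitrarily small perturbation of $K_{a,b}$, yet triangle-free, so $bk=0$. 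Only the quantitative hypothesis $\rho\geq\sqrt m$ rules such graphs out, and converting that hypothesis into $bk(G)>\tfrac1{12}\sqrt[4]{m}$ is precisely the delicate step your sketch does not perform. A secondary flaw: the opening reduction to connected $G$ is not free, because $m$ counts the edges of all components; proving the bound for the component attaining $\rho$ only gives $bk(G)>\tfrac1{12}m_{1}^{1/4}$ with $m_{1}\leq m$. This is repairable (work with the full spectrum throughout, and note that in the bipartite equality case the other components must be edgeless), but as written it is another loose end.
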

Note that complete bipartite graphs attain the maximum spectral radius among all $B_{r+1}$-free graph with size $m$. Hence Nikiforov's theorem inspire us to investigate the maximum spectral radius of non-bipartite $B_{r+1}$-free graph with size $m$ and characterize the corresponding spectral extremal graphs.
Denote by $S_{m,1}^{+}$ the graph obtained from $K_{1,m-1}$ by adding an edge within its independent set. Now we propose the following conjecture.
\begin{conj}
Let $G$ be a non-bipartite $B_{r+1}$-free graph with size $m$. Then
$$\rho(G)\leq\rho\big(S_{m,1}^{+}\big),$$
with equality if and only if $G\cong S_{m,1}^{+}$.
\end{conj}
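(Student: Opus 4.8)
The plan is to let $G^{*}$ be a graph of maximum spectral radius among all non-bipartite $B_{r+1}$-free graphs with $m$ edges and no isolated vertex, write $\rho=\rho(G^{*})$, and first record two anchoring estimates together with a reduction to the connected case. For the reduction, if $G^{*}$ were disconnected, a component $C$ attaining $\rho$ is either non-bipartite, in which case $C$ is itself feasible with $e(C)\le m$ edges and one finishes by induction on $m$ using that $\rho(S_{m,1}^{+})$ is strictly increasing in $m$, or bipartite, in which case $\rho(C)\le\sqrt{e(C)}\le\sqrt{m-3}$ because a different component must carry an odd cycle of at least $3$ edges. For the anchors, $S_{m,1}^{+}$ is feasible, so $\rho\ge\rho(S_{m,1}^{+})$; computing the quotient matrix of $S_{m,1}^{+}$ exactly as in Lemma~\ref{lem4} shows that $\rho(S_{m,1}^{+})$ is the largest root of $x^{3}-x^{2}-(m-1)x+(m-3)=0$, whence $\rho(S_{m,1}^{+})^{2}=m-1+\frac{2}{\rho(S_{m,1}^{+})-1}>m-1$ and therefore $\rho^{2}>m-1$. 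Since $bk(G^{*})\le r$, Nikiforov's theorem stated above forces $\rho<\sqrt{m}$ once $m$ is large (otherwise $G^{*}$ would be complete bipartite, hence bipartite); in particular $\sqrt{m-1}<\rho<\sqrt{m}$ and $\rho$ is far larger than $r$.

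Let $\mathbf{x}$ be the Perron vector normalised by $x_{u^{*}}=\max_{v}x_{v}=1$. The heart of the argument is the \textbf{key lemma} that $u^{*}$ is adjacent to every other vertex. The plan for this is an edge-conserving Kelmans-type shift: if some $z\not\sim u^{*}$ exists, apply Lemma~\ref{lem2} with $(v_{i},v_{j})=(u^{*},z)$ and $S=N(z)\setminus(N(u^{*})\cup\{u^{*}\})$, which keeps the number of edges fixed and, since $x_{u^{*}}\ge x_{z}$, strictly increases $\rho$ whenever $S\neq\varnothing$. The difficulty, and the step I expect to be the main obstacle, is that relocating these edges onto $u^{*}$ may raise the number of triangles through some edge at $u^{*}$ above $r$ (creating a forbidden $B_{r+1}$) or may destroy the single odd cycle that makes $G^{*}$ non-bipartite. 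Ruling these out should proceed in the spirit of the case analysis behind Theorem~\ref{main}, using the residual index $\Gamma_{w}=d_{A}(w)(x_{u^{*}}-x_{w})$ from \eqref{eq7} and the codegree bound $|N(u^{*})\cap N(w)|\le r$ to show that whenever the naive shift is obstructed, a compensating local modification still yields a feasible graph of strictly larger spectral radius.

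Granting the key lemma, the finish is clean. Put $H=G^{*}-u^{*}$ on $n-1$ vertices and $k:=e(H)=m-(n-1)$; book-freeness gives $\Delta(H)\le r$ (else the edge $u^{*}a$ lies in at least $r+1$ triangles), and non-bipartiteness gives $k\ge1$ (else $G^{*}=K_{1,n-1}$ is bipartite). Because $u^{*}$ is complete to $V(H)$, the eigen-equation yields
\[
\rho^{2}=(n-1)+\sum_{ab\in E(H)}(x_{a}+x_{b})=(m-k)+\sum_{ab\in E(H)}(x_{a}+x_{b}).
\]
For $v\in V(H)$ one has $\rho x_{v}=1+\sum_{w\in N_{H}(v)}x_{w}\le 1+r\xi$ with $\xi:=\max_{v\in V(H)}x_{v}$, so $\xi\le\frac{1}{\rho-r}$ and $\sum_{ab\in E(H)}(x_{a}+x_{b})\le\frac{2k}{\rho-r}$. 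Combining this with $\rho^{2}>m-1$ gives $(k-1)(\rho-r)<2k$, which for $k\ge2$ forces $\rho-r<4$, contradicting $\rho>\sqrt{m-1}$ for large $m$. Hence $k=1$, so $H$ consists of a single edge together with isolated vertices, i.e. $G^{*}\cong S_{m,1}^{+}$; tracing the equalities then gives uniqueness.

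In summary, the only genuinely hard point is the key lemma. In the order-constrained setting of Theorem~\ref{main} one may freely add edges, but here every move must conserve the edge count while simultaneously respecting both the book-freeness and the odd-cycle constraints; the delicate task is to verify that pulling edges onto the dominant vertex $u^{*}$ can always be arranged, or repaired, without exceeding booksize $r$. I would expect this to require the residual-index bookkeeping of Lemmas~\ref{cla6}--\ref{cla10} reworked with edges, rather than vertices, held fixed.
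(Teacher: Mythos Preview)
The statement you are attempting to prove is listed in the paper as a \emph{conjecture} (in the concluding remarks, Section~\ref{se4}); the paper offers no proof whatsoever, only motivation from Nikiforov's resolution of the bipartite case. There is therefore nothing to compare your argument against.

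As for the merits of your proposal on its own: the anchoring estimates and the endgame are sound. Your computation that $\rho(S_{m,1}^{+})$ satisfies $x^{3}-x^{2}-(m-1)x+(m-3)=0$, hence $\rho^{2}=m-1+\frac{2}{\rho-1}>m-1$, is correct, and once the key lemma is granted the chain
\[
m-1<\rho^{2}=(m-k)+\sum_{ab\in E(H)}(x_{a}+x_{b})\le (m-k)+\frac{2k}{\rho-r}
\]
does force $k=1$ for large $m$, which pins down $G^{*}\cong S_{m,1}^{+}$.

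However, you yourself identify the gap: the ``key lemma'' that $u^{*}$ is universal is not proved, only wished for. The Kelmans-type shift you propose is not edge-count-neutral in general (it deletes $|S|$ edges at $z$ and adds $|S|$ edges at $u^{*}$, fine), but the real obstacle is exactly the one you flag: after the shift, any vertex $w\in N(u^{*})\cap N(z)$ that was adjacent to some $s\in S$ now sees $u^{*}$ and $s$ as common neighbours of the edge $u^{*}w$, and there is no a~priori bound preventing the booksize at $u^{*}w$ from jumping past $r$. Likewise, if the unique odd cycle of $G^{*}$ passes through an edge $zs$ with $s\in S$, the shift may bipartite the graph. Your suggestion to repair these failures ``in the spirit of'' Lemmas~\ref{cla6}--\ref{cla10} is not an argument: those lemmas exploit the freedom to \emph{add} edges granted by the order constraint, a freedom absent here. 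Until the key lemma is actually established, what you have is a promising outline, not a proof, of an open problem.
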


\section*{Declaration of competing interest}

The authors declare that they have no conflict of interest.

\section*{Data availability}

No data was used for the research described in the article.

\section*{Acknowledgements}

The authors would like to thank the anonymous referees for their helpful comments on improving the presentation of the paper. The research of Ruifang Liu is supported by the National Natural Science Foundation of China (Nos. 12371361 and 12171440), Distinguished Youth Foundation of Henan Province (No. 242300421045).

\end{document}